\definecolor{BLUE}{RGB}{41,86,143}
\definecolor{RED}{RGB}{178,31,53}
\DeclareSymbolFontAlphabet{\amsmathbb}{AMSb}
\newcommand{\refappendix}[1]{\hyperref[#1]{Appendix}}
\newcommand{\bb}[1]{\mathbb{#1}}
\newcommand{\E}{\ensuremath{ \bb{E} } }
\newcommand{\Q}{\ensuremath{ \bb{Q} } }
\newcommand{\bo}[1]{\ensuremath{{\bf #1 } }}
\newcommand{\re}{\ensuremath{\mathbb{R}}}
\newcommand{\paren}[1]{\ensuremath{\left( #1\right) }}
\newcommand{\F}{\ensuremath{\mathscr{F}}}
\newcommand{\p}{\mathbb{P}}
\newcommand{\z}{\ensuremath{\mathbb{Z}}}
\newcommand{\na}{\ensuremath{\mathbb{N}}}
\newcommand{\mc}[1]{\ensuremath{\mathscr{#1}}}
\newcommand{\G}{\ensuremath{\mc{G}}}
\newcommand{\floor}[1]{\ensuremath{\lfloor #1\rfloor}}
\theoremstyle{plain}
\newtheorem{teo}{Theorem}
\newtheorem{lemma}{Lemma}
\newtheorem{coro}{Corollary}
\newtheorem{propo}{Proposition}
\theoremstyle{definition}
\newcounter{subsubsubsection}[subsubsection]
\begin{document}

\begin{frontmatter}
\title{Sampling schemes  of  multitype continuous-time Bienaym\'e-Galton-Watson trees and limiting critical genealogies}
\runtitle{Sampling schemes  of  MBGW trees and limiting genealogies}

\begin{aug}
\author[A]{\fnms{Osvaldo}~\snm{Angtuncio Hern\'andez}\ead[label=e1]{osvaldo.angtuncio@cimat.mx}\orcid{0009-0008-9599-623X}},
\author[B]{\fnms{Simon C.}~\snm{Harris}\ead[label=e2]{simon.harris@auckland.ac.nz}}
\and
\author[C]{\fnms{Juan Carlos}~\snm{Pardo}\ead[label=e3]{jcpardo@cimat.mx}}

\address[A]{Departamento de Probabilidad y Estad\'istica, Centro de Investigaci\'on en Matem\'aticas\printead[presep={,\ }]{e1,e3}}

\address[B]{University of Auckland, New Zealand\printead[presep={,\ }]{e2}}

\address[C]{Departamento de Probabilidad y Estad\'istica, Centro de Investigaci\'on en Matem\'aticas}

\end{aug}

\begin{abstract}

We study the genealogies of samples of $k$ distinguished particles drawn from the population alive at some fixed time in a  continuous-time multitype Bienaym\'e-Galton-Watson (MBGW) process under two different type dependent sampling schemes:
uniform sampling without replacement within types given a fixed type configuration, and sampling according to type-dependent weights. These schemes complement the uniform sampling at fixed time $T$  considered in Angtuncio, Pardo, C. Harris (2026a) which did not distinguish between sampled types. 
Under each scheme for a fixed sampling time $T$, we characterise the associated times of most recent common ancestors, ancestral offspring distributions, and type-dependent ancestral structure of the sample genealogy.

In addition, under the assumption that the MBGW process is critical with finite second moments, we show that, conditional on survival of the population, a large time limiting sample genealogy emerges which is robust to the sampling scheme used. We identify this universal genealogy to have the same tree structure as the single-type case in C. Harris, Johnston, Roberts (2020), and we describe its ancestral type behaviour over scaled-times - this essentially being decoupled from the tree structure {except} at the times of ancestral splitting events.


\end{abstract}

\begin{keyword}[class=MSC]
\kwd[Primary ]{60J80, 60G09}
\kwd[; secondary ]{60J90, 60J95}
\end{keyword}

\begin{keyword}
\kwd{Multitype Bienaym\'e-Galton-Watson tree}
\kwd{coalescent process}
\kwd{genealogy}
\kwd{spines}
\kwd{sampling from a population}
\end{keyword}

\end{frontmatter}

\tableofcontents

\section{Introduction and main results}
Continuous-time multitype Bienaym\'e--Galton--Watson (MBGW) processes generalise the classical branching model by allowing individuals to belong to different types, each type determining its reproduction mechanism. 
The multitype framework was first developed in discrete time and is now standard; see, for instance, \cite{MR2047480,MR0408019}. 
When the mean matrix has finite entries and is irreducible, Perron--Frobenius theory describes the long-time behaviour through a dominant eigenvalue, leading to a classification parallel to the single-type case. 
If irreducibility is absent, substantially richer phenomena may occur, see \cite{MR2226887}.

Let $\mathbb{N}=\{1,2,\dots\}$ and $\mathbb{Z}_+=\{0\}\cup\mathbb{N}$. 
Fix $d\in\mathbb{N}$ and rates $\alpha_1,\dots,\alpha_d>0$. 
We consider a $d$-type continuous-time MBGW process initiated by a single ancestor of type $m\in[d]:=\{1,\dots,d\}$. 
Individuals (or particles) evolve independently as follows: a type-$m$ individual lives an exponential time with parameter $\alpha_m$ and, at death, produces offspring according to an independent copy of a random vector 
$\mathbf{L}_m=(L_m^{(1)},\dots,L_m^{(d)})\in\mathbb{Z}_+^d$ with distribution
\[
p_m(\bm\ell)=\mathbb{P}(\mathbf{L}_m=\bm\ell)=\mathbb{P}(\mathbf{L}_m=(\ell_1, \ldots, \ell_d)),
\qquad 
\sum_{\bm\ell\in\mathbb{Z}_+^d}p_m(\bm\ell)=1.
\]
Hence $p_m(\bm\ell)$ represents the probability of producing $\ell_j$ children of type $j$, for all $j\in[d]$. 
All descendants live and reproduce independently following the same rules according to their type.

For $t\ge0$, write $\mathbf{Z}_t=(Z_t^{(1)},\dots,Z_t^{(d)})$, where $Z_t^{(m)}$ is the number of type-$m$ individuals alive at time $t$, and set $N_t=\sum_{m=1}^d Z_t^{(m)}$. The process $\mathbf{Z}=(\mathbf{Z}_t)_{t\ge 0}$ 
is referred to as the MBGW process.
We note that $\mathbf{Z}$ satisfies the \emph{branching property}, that is, the law of the process $\mathbf{Z}$  starting from $\mathbf{x+y}\in\mathbb{Z}_+^d$ coincides with that of the sum of two independent copies of the process $\mathbf{Z}$ started from $\mathbf{x}$ and $\mathbf{y},$ respectively. 
Throughout, we assume that the process $\mathbf{Z}$ is \emph{conservative}, that is, it does not explode almost surely with $\mathbb{P}(N_t<\infty,\,  \forall t>0)=1$. 
A sufficient condition for conservativeness of $\mathbf{Z}$ is  given by Savits \cite{MR282426}, namely
\begin{equation}\label{savits}
\int^1\frac{{\rm d}s}{s-\overline{F}(s)}<\infty, \qquad \textrm{with}\qquad \overline{F}(s)=\max_{m\in \{1,\ldots, d\}}\sum_{n\ge 2} \mathbb{P}\left( \sum_{i=1}^d L^{(i)}_m=n\right) s^{n}.
\end{equation}
See for instance Proposition 2.12 and Remark 2.13 in the aforementioned reference.

However, as we wish to consider the genealogies of individuals in the population, we will require a richer process than $\mathbf{Z}$ which also includes information about how individuals are related to one another. 
We will do this by assigning a unique label to each individual according to the standard Ulam-Harris convention.  
The Ulam-Harris labelling encodes the genealogical structure of a single-type family tree as follows. Let $\mathbb{U}=\bigcup_{n\ge0}\mathbb{N}^n$ with $\mathbb{N}^0=\{\emptyset\}$. 
The initial ancestor is labelled $\emptyset$. If an individual $u\in\mathbb{N}^n$ has $\ell$ offspring, they are labelled $u1,\dots,u\ell\in\mathbb{N}^{n+1}$. 
For example, individual labelled $u=(3,2,4)$ is the fourth child of the second child of the third child of the initial ancestor.
 For $u\in \mathbb{U}$, let  $|u|$ denote the generation of $u$. We write $u\prec v$ if $u$ is a strict ancestor of $v$, and $u\preceq v$ if $u$ is an ancestor of $v$ or $u=v$.

Let $\mathcal{N}_t$ denote the set of labels alive at time $t$. For each individual $u\in\mathcal{N}_t$,  let $C_t^u$ denote its type at time $t$. 
The population size at time $t$ is therefore $N_t={\rm card}\{\mathcal{N}_t\}$  
and the number of individuals of type $m$ at time $t$ is $Z_t^{(m)}=\sum_{u\in\mathcal{N}_{t}} \mathbf{1}_{\{C^u_t=m\}}$. The collection 
$\{(u,C^u_s) : u\in \mathcal{N}_s\}_{s\le t}$
records both the genealogical structure of the population and the types of all individuals up to time $t$. Since this information is sufficient for our purposes, we let
 $(\mathcal{F}_t)_{t\ge 0}$ denote the 
right-continuous filtration generated by the enriched process $\{(u,C^u_t) : u\in \mathcal{N}_t\}_{t\geq 0}$,
that is, $\mathcal{F}_t:=\sigma( (u,C^u_s)_{u\in \mathcal{N}_s} : s\leq t)$.
We assume that the filtration $(\mathcal{F}_t)_{t\ge 0}$ satisfies all the usual hypotheses, and write $\mathcal{F}:=\sigma(\cup_{t\ge 0}\mathcal{F}_t )$.

For each $m\in\{1,\dots,d\}$,  let $\mathbb{P}_{m}$ denote the law of the process $\mathbf{Z}$, including its genealogical information, when the population is initiated by a single ancestor of type $m$  labeled $\emptyset$. 
These probability measures are defined on the filtered probability space $(\Omega, \mathcal{F}, (\mathcal{F}_t)_{t\ge 0})$. Later, we will also consider initial populations consisting of more than one individual. In this case, we specify an initial number 
 $z_m$ of individuals of type $m$. Unless explicitly stated otherwise, we will not keep track of the precise labelling of the initial individuals. Instead, when the initial population size $n_0:=\sum_{m=1}^d z_m$ satisfies $n_0>1$, we assign the labels $1,\dots,n_0$ at random; when $n_0=1$, we use the label $\emptyset$.  For $\mathbf{z}=(z_1,\dots,z_d)\in \mathbb{Z}_+^d$, we  denote by $\mathbb{P}_{\mathbf{z}}$  the law of  $\mathbf{Z}$, including its genealogical information, when the process starts with $z_m$ individuals of type $m$. Finally, letting $(\mathbf{e}_m)_{m\in[d]}$ denote the canonical basis in $\mathbb{R}^d$, where $\mathbf{e}_m$ is a vector in $\mathbb{R}^d$ with value 1 in its $m$-th coordinate and $0$ elsewhere, 
we observe that  $\mathbb{P}_{\mathbf{e}_m}=\mathbb{P}_{m}$, in agreement with the single-ancestor case.

For  vectors ${\bm r},{\bm \ell}\in \mathbb{Z}_+^d$, we use the multi-index notation  $\bo{r^{\bm \ell}}:=r_1^{\ell_1}\cdots r_d^{\ell_d}.$
For $\bo{r}\in [0,1]^d$, let 
\[
\bo{f} (\bo r):=\paren{f_1(\bo{r}),\ldots, f_d(\bo{r})}\in [0,1]^d
\] 
denote the probability generating function associated with the offspring  distribution ${\bo p}$, where for every component $i\in \{1,\ldots,d\}$,
\[
f_i(\bo{r}):=\E_i\left[\prod_{m=1}^dr_m^{L^{(m)}}\right]=\sum_{{\bm \ell}\in \mathbb{Z}^d_+}p_i(\bm \ell )\bo{r^{\bm \ell}}=\sum_{{\bm \ell}\in \mathbb{Z}^d_+}p_i\paren{\ell_1,\ldots, \ell_d}r_1^{\ell_1}\cdots r_d^{\ell_d}.
\]
For simplicity of exposition, we write $[d]:=\{1,\ldots,d\}$. 

Recall that a MBGW branching process is called \emph{simple} if its generating function $\bo{f}$ is such that for all $i\in [d]$, $f_i$ is linear
in each coordinate with no constant term, i.e.
\[
f_i(\bo{r})=p_i(\bo e_1)r_1+\cdots+  p_i(\bo e_d)r_d, \quad \textrm{for}\quad \bo{r}\in [0,1]^d.
\]
In other words, each individual has exactly one offspring possibly of different type and thus the process has a constant number of individuals. 
We wish to exclude such simple cases.

For our purposes, we further require  that each type has a positive probability of eventually producing offspring of every other type.  We express this assumption in terms of the so-called {\it mean matrix} of $\bo Z$. More precisely, we define the mean matrix  $\bo{M}:=(m_{ij})_{i,j\in [d]}$, where $m_{ij}$ denotes the expected number of type $j$ offspring produced by an individual of type $i$, namely
\begin{equation*}
m_{ij}:=\E_{i}\left[L^{(j)}\right]=\sum_{\bm \ell\in \mathbb{Z}^d_+} \ell_jp_i(\bm{\ell}).
\end{equation*}
The process is called irreducible if,
for every $i,j\in [d]$, $m_{ij}^{(n)}>0$ for some $n$,
where $m^{(n)}_{ij}$ is the $(i,j)$-th entry of the matrix ${\bo M}^n$.

From now on, we assume that 
\begin{equation}\label{hyp1}
 \tag{\bf{H}} \textrm{$\bo Z$ is non-simple, conservative and   irreducible.}
\end{equation}

\medskip


%
%

Our aim is twofold. First, we analyse the genealogy of a sample of 
$k\geq2$ individuals observed at fixed times under different sampling schemes introduced below: uniform sampling without replacement; uniform sampling without replacement given a fixed type configuration, and sampling according to type-dependent weights.
For the genealogies obtained by sampling at fixed times $T$ with each scheme, we will characterise the associated times of most recent common ancestors, ancestral offspring distributions, and type-dependent ancestral structure. 
Second, we study the corresponding limiting genealogies conditional on survival as time tends to infinity, under the assumption that the continuous-time MBGW process is critical with finite offspring variance.
We show that, under uniform sampling without replacement  and ignoring types, the sample genealogy converges in distribution, as $T$ tends to infinity, to a universal limiting structure with the same tree topology as Kingman's coalescent but quite different coalescent time dependence - exactly as in the single-type case in \cite{MR4133376}. In particular, the $k-1$ pairwise coalescence times form a mixture of i.i.d. random variables.
Looking in greater detail when types are tracked, we identify the limiting genealogy together with the types of the ancestors involved in each splitting event and their corresponding offspring distributions, as well as the ancestral type behaviour along branches within the tree. In the limit, this type evolution is essentially decoupled from the tree structure, except at the times of splitting events. Importantly, we show that this limiting genealogical type behaviour is robust with respect to the sampling scheme used: the same genealogy also arises under uniform sampling conditional on a fixed type configuration and under type-dependent weighted sampling. 

For uniform sampling without replacement, in our proofs we will make use $k$ distinguished \emph{spine} particles together with a suitable change of measure, as developed by the authors in \cite{AHP-p1}.  In the cases of uniform sampling conditional on fixed types, or weighted sampling according to types, we will develop some alternative changes of measure with appropriate properties tailored according to the particular sampling scheme.

Before introducing our main objects of interest, the associated ancestral processes, we first describe the three sampling schemes that we will consider. We note that in \cite{AHP-p1},  the genealogy of a sample of 
$k>1$ individuals observed at fixed times was studied under uniform sampling.

 In the present work we consider the following sampling schemes for selecting $k$ particles from the population alive at time 
$T$ (see Figure \ref{figDifferentSamlpings}):

	\begin{figure}
\includegraphics[width=.6\textwidth]{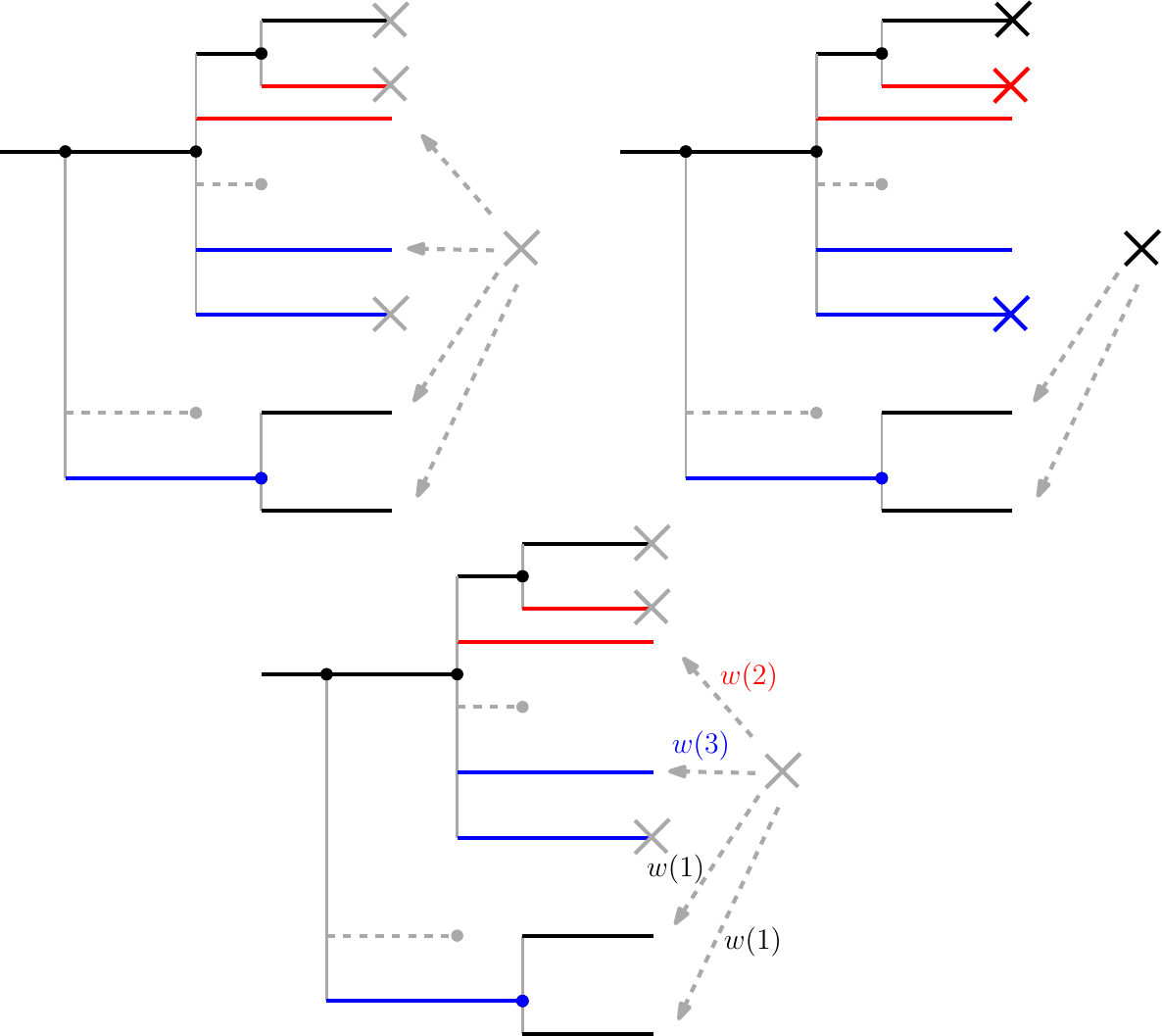}
\caption{
{\it Examples of different sampling schemes.} In all panels, crosses on leaves indicate sampled individuals, and arrows in front of leaves indicate possible choices for the next individual to be sampled. 
 {\it (i) Uniform sampling (top-left):} the next cross selects uniformly among the four remaining leaves.
{\it (ii) Uniform sampling given a fixed type configuration (top-right):} the next Black cross selects uniformly among the two leaves of the same color (Black). {\it (iii) Sampling according to type dependent weights (bottom):} the next cross selects one of the four remaining leaves according to given type weights eg. the Red is sampled next with probability $w(2)/(2w(1)+w(2)+w(3))$.
}\label{figDifferentSamlpings}	
		\end{figure}

\begin{itemize}
\item[(i)] \textbf{Sampling uniformly at random without replacement at time $T$.} 
Conditionally on the event  that there are at least $k$ particles alive at time $T > 0$, we select a given sample of $k$ distinct individuals with probability
\[
\frac{1}{N_T(N_T-1)\cdots(N_T-k+1)},
\]
where, recall,  $N_T$ is the number of individuals alive at time $T$.

\item[(ii)] \textbf{Sampling uniformly at random without replacement at time $T$ given a fixed type configuration.} 
Let $\mathbf{c}=(c_1, c_2, \ldots, c_k)$ be a prescribed  vector of types (or colours) for the sample. For each $m\in [d]$, let
\[
 D_m:=\sum_{h=1}^k\bo 1_{\{c_h=m\}}
 \]
  denote the number of sampled individuals of type $m$.
Conditionally on the event that $Z^{(m)}_T\geq D_m$, for all $m\in [d]$, a given sample of $k$ individuals whose $i$-th individual has type $c_i$ is selected, with probability
\[
\prod_{m=1}^d \frac{1}{Z_T^{(m)}(Z_T^{(m)}-1)\cdots(Z_T^{(m)}-D_{m}+1)}.
\]
\item[(iii)]  
\textbf{Sampling according to type dependent weights at time $T$.} 

Let $(w_1, w_2, \ldots, w_d)$ be a vector of nonnegative weights associated with the types. 
Conditional on the population's evolution up to time $T$, we wish to sample each ordered selection $\mathbf{v}=(v_1,\dots, v_k)$ of $k$  distinct individuals alive at time $T$ with a probability proportional to the product of their individual type weights.
That is, given there at least $k$ particles alive, the sample $\mathbf{v}=(v_1,\dots, v_k)\in\mathcal{N}^{(k)}_T$ of individuals with corresponding types $\mathbf{c}=(c_1,\dots,c_k)$ is selected with probability
\begin{equation}\label{multiweightprobability}
\frac{\prod_{i=1}^k w_{c_i}}{\sum_{(u_1,\dots, u_k)\in\mathcal{N}^{(k)}_T}\prod_{h=1}^k w_{c(u_h)} }
=
\frac{\prod_{i=1}^k w_{c_i}}{k! \sum_{\substack{k_1,\dots,k_d \in [k]\\\textrm{s.t. }\sum_{i=1}^d k_i=k}} \prod_{m=1}^d \binom{Z_T^{(m)}}{k_i}w_m^{k_m}}
\end{equation}
where $c(v_h)$ denotes the type (or color) of particle $v_h$, and noting that
summing over all possible choices $\mathbf{u}\in \mathcal{N}^{(k)}_T$ can be decomposed by first deciding how many to pick of each type (i.e. $k_1,\dots,k_d$ where these sum to $k$), then deciding which $k_i$ to choose from the $Z_t^{(i)}$ of type $i$  with these choices all having weight $\prod_{i=1}^d  w_i^{k_i}$, and then taking account of the $k!$ possible orderings the $k$ chosen individuals.

Equivalently, the sample $\mathbf{v}\in \mathcal{N}^{(k)}_T$ is selected with probability 
\begin{equation}
\frac{\prod_{m=1}^d w_m^{D^{(m)}_{\bo v}}}{\sum\binom{k}{k_1,\dots,k_d}\prod_{m=1}^d \left(Z_T^{(m)}(Z_T^{(m)}-1)\cdots(Z_T^{(m)}-k_m+1)\right)w_m^{k_m}}, 
\end{equation}
where $\bo D_{\bo v}=(D_{\bo v}^{(1)}, \cdots, D_{\bo v}^{(d)})$ is the type-degree of $\bo v$ with
\[
D_{\bo v}^{(i)}:=\sum_{h=1}^k\bo 1_{\{c(v_h)=i\}}, \qquad i\in[d],
\]
$\binom{k}{k_1,\dots,k_d} = k!/(k_1!\dots k_d!)$ is the multinomial coefficient, and
the sum in the denominator runs over all type-degree configurations $(k_1,\dots,k_d)$ such that $\sum_{i=1}^d k_i=k$, where $k_i$ represents the number of type $i$ individuals in the selection.

%
%
%
\end{itemize}

The genealogical structure of a continuous-time MBGW process is canonical in the sense that every individual alive at time 
$t$ has a unique ancestor at each earlier time $s<t$. This naturally raises questions about the common ancestry of sampled individuals and the evolution of their types.

Before introducing the ancestral dynamics of a sample, we recall some standard terminology. A {\it block} is a subset $B\subseteq \mathbb{N}$, and we denote  $[k]=\{1,\dots,k\}$. 
 A {\it partition} of a block $B\subseteq \mathbb{N}$ is a countable collection 
 $A=\{A_i, i\in \mathbb{N}\}$ 
 of pairwise disjoint blocks  such that $\cup_{i\in \mathbb{N}}A_i=B$.
 In this setting, it is natural to consider partitions whose blocks carry type information. To this end, we adopt the terminology of \cite{AHP-p1}.  A {\it coloured partition} is  a partition whose blocks are endowed with types (or colours).  Formally, a coloured partition of $[k]$ is a collection $\bo{P}=\{P_1,\ldots, P_d\}$,  where for each type $m\in [d]$,
   $P_m=\{A_{m,1}, A_{m,2},\ldots,A_{m,g_m}\}$
   is a family of $g_m\ge 0$ disjoint blocks of colour 
$m$. Writing $a_{m,q}=|A_{m,q}|$ for the size of block $A_{m,q}$, the blocks across all colours form a partition of $[k]$, and thus
\[
\sum_{m\in [d]} \sum_{q\in[g_m]} a_{m,q}=k.
\]
With a block $A_{m,q}$ in  $P_m$, any two elements $h_1,h_2\in A_{m,q}$ satisfy the equivalence relation
\[
\textrm{ $h_1\sim h_2$ if the marks $h_1$ and $h_2$ follow the same individual of type $m$. }
 \]
The total number of elements of $[k]$ associated with  type $m$ is therefore
\begin{equation}\label{prom}
\overline{a}_m:=a_{m,1}+\cdots +a_{m,g_m}=\sum_{q\in[g_m]} a_{m,q}.
\end{equation}
 Figure \ref{figNotationFirstSplittingTimePartition} illustrates an  explicit example of a coloured partition embedded in a MBGW tree with marks.

\begin{figure}
\includegraphics[width=.6\textwidth]{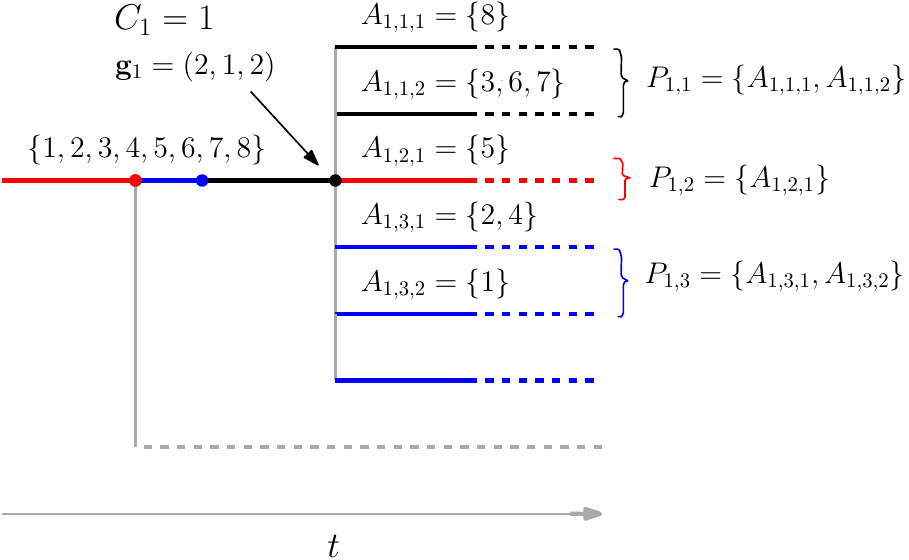}
\caption{Example of a coloured partition after a splitting event with $k=8$ marks, and three types $1,2,3$ represented by colours Black, Red, Blue, respectively. 
The vertex  carrying all marks  at time $t-$, has type 1 (Black) and offspring types $\bm \ell=(2,1,3)$ (that is, 2 Black, 1 Red, and 3 Blue). 
After the splitting event, the coloured partition formed is $\bo{P}=(P_1,P_2,P_3)$. 
In this case there are $g_1=2$ blocks of type 1, namely  $A_{1,1}=\{8\}$ and $A_{1,2}=\{3,6,7\}$, 
with sizes $a_{1,1}=1$ and $a_{1,2}=3$, respectively. Moreover, there are $g_2=1$ blocks of type 2, namely  $A_{2,1}=\{5\}$, of size $a_{2,1}=1$.
There are $g_3=2$ blocks of type 3, which are $A_{3,1}=\{2,4\}$ and $A_{3,2}=\{1\}$ of respective sizes $a_{3,1}=2$ and $a_{3,2}=1$. Finally, the number of marks following type 1, 2 or 3 individuals are $\overline a_1=4,\ \overline a_2=1$ and $\overline a_3=3$. 
}\label{figNotationFirstSplittingTimePartition}
\end{figure}

We label the $k$ sampled particles by the integers $1$ through $k$.  Depending on the  sampling scheme, we associate with the sample an {\it ancestral coloured process} taking values in the space of coloured partitions of $[k]$: (i) $\pi^{(d,k,  T, u)} := (\pi^{(d,k,T, u)}_t)_{t \in [0,T]}$, (ii) $\pi^{(d,k,T, {\bo c})} := (\pi^{(d,k, T, {\bo c})}_t)_{t \in [0,T]}$ or (iii) $\pi^{(d,k,T, {\bo w})} := (\pi^{(d,k, T, {\bo w})}_t)_{t \in [0,T]}$.
  These processes are  defined  by the  rule:
  \begin{center}
$i$ and $j$ belong to the same block of $\pi^{(d,k, T, \cdot)}_t$ with colour $m$ if and only if they descend from the same  ancestor of type $m$ at time $t$.
\end{center}
Equivalently, the particles labelled $i$ and $j$ at time $T$ share a unique common ancestor of type  $m$ at time $t$. 
For brevity, we write $\pi^{(d,k,T,\cdot)}$ to refer generically to any of these processes.  

For all sampling schemes, the initial state consists of a single ancestral block. If the root of the MBGW tree has type $r$, then
\[
\pi^{(d,k,T, \cdot)}_0 =\overline{[k]}_{r}:=\{\varnothing,\ldots,\varnothing, [k],\varnothing, \ldots, \varnothing\}
\]
that is, the block $[k]$ appears in the coordinate corresponding to type $r$, while all other coordinates are empty. This configuration represents a single ancestral block endowed with the unique initial colour (type).

At time $T$, the process reaches the discrete coloured partition into singletons, 
$\pi^{(d,k,T, \cdot)}_T =\{S_m\}_{m\in [d]}$, where $S_{m}$ denotes the collection of singleton blocks corresponding to particles of type $m$.
 As time evolves over $[0,T]$, the process moves through the space of coloured partitions of $[k]$. Blocks may split into several sub-blocks, possibly of different types, and may also change type over time.

Let $M$ denote the number of splitting events required to decompose  the initial block $[k]$ into singletons and write, with a slight abuse of notation,  
$$
0=\tau_0<\tau_1 < \cdots < \tau_M,
$$
for the corresponding split times. These are precisely the times at which the process   $\pi^{(d,k,T,\cdot)}$ experiences  a discontinuity due to a genuine block splitting.

Related constructions in the single-type setting can be found  in Bertoin and Le Gall \cite{MR1771663}, Harris et al. \cite{MR4133376}, Harris et al. \cite{MR4718398}  and Johnston \cite{MR4003147}. In contrast to the single-type setting, the coloured framework introduces an additional source of discontinuity:  changes of colours within blocks. Even when the underlying partition structure remains unchanged, a change in the type of one or more elements induces a jump in the  process. Consequently  $\pi^{(d,k, T, \cdot)}$ is  almost surely right-continuous, with jumps arising from either from block splittings or from colour changes.

\medskip

To state the first main results of this paper, we introduce additional notation.   The {\it coloured topology}, or {\it ancestral coloured sequence, }$\mathcal{T}^{(\cdot)}$  of $\pi^{(d,k, T, \cdot)}$ is defined as the sequence 
\[
\mathcal{T}^{(\cdot)}:=(\mathcal{T}^{(\cdot)}_0,\cdots, \mathcal{T}^{(\cdot)}_{M}) \qquad \textrm{with}\qquad \mathcal{T}^{(\cdot)}_{h}=\pi^{(d,k, T, \cdot)}_{\tau_h}.
\] 
Intuitively, one may view this construction as encoding a multitype tree with edge lengths and $k$ marked leaves. Each branch undergoes  colour changes, recorded through the colours of the blocks of  
 $\pi^{(d,k, T, \cdot)}$.  For each $h = 0, \ldots, M$, the partition $\mathcal{T}^{(\cdot)}_h$ captures both the block structure and the type of each individual alive at time $\tau_h$. For each $h\in [M]$ and $m\in [d]$, let  $G_{h,m}$ denote the number of new blocks of  type $m$ created at time $\tau_h$, and write  $\bo G_{h}=(G_{h,1},\ldots, G_{h,d})$.  Thus, the  sequence of coloured partitions $\{\mathcal{T}_h^{(\cdot)}\}_{h = 0}^{M}$ satisfies the following properties:
\begin{itemize}
\item $\mathcal{T}^{(\cdot)}_0$ is the trivial coloured partition consisting of a single block,
\item $\mathcal{T}^{(\cdot)}_{M}$ is the discrete coloured partition consisting of singletons, and
\item for each $h = 0, \ldots, M - 1$, the coloured partition $\mathcal{T}^{(\cdot)}_{h+1}$ 
consists of 
\begin{itemize}
\item the coloured subpartitions of $\mathcal{T}^{(\cdot)}_h$ that did not split (updated with their current colours at time $\tau_{h+1}$), and
\item the newly created coloured subpartitions, determined by the vector $\bo G_{h+1}$, at time $\tau_{h+1}$.
\end{itemize}
\end{itemize}

 From $(\mathcal{T}^{(\cdot)}_h)_{h = 0}^{M}$,  we extract the {\it ancestral coloured subsequence}
 \[
 \mathcal{P}^{(\cdot)}:=({\mathcal{P}}^{(\cdot)}_0, \ldots, \mathcal{P}^{(\cdot)}_M),
 \]
 which contains only those blocks whose size changes at each splitting event. We set $\mathcal{P}^{(\cdot)}_0=\mathcal{T}^{(\cdot)}_0$ and for $h\ge 1$, $\mathcal{P}^{(\cdot)}_h$ consists precisely of the newly created coloured subpartition at time $\tau_h$.  See Figure \ref{figNotationColouredPartitionSplittingPartitionProcessV2} for an illustration.
	\begin{figure}
\includegraphics[width=.6\textwidth]{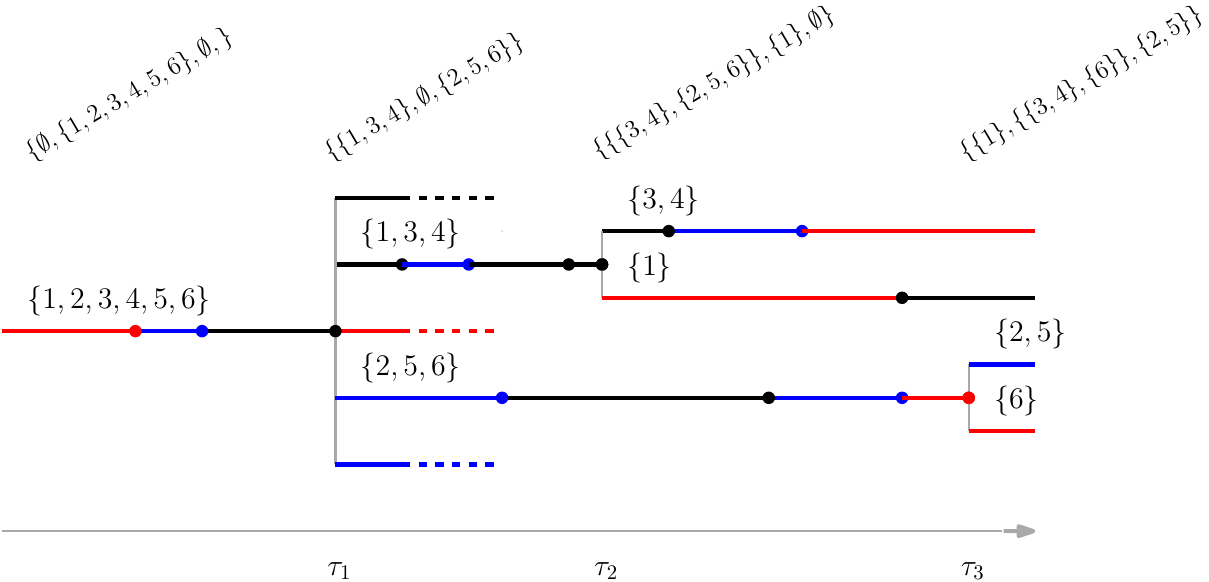}\caption{3-type MBGW tree with 6 spines starting with one individual of type 2 and  its ancestral coloured sequence. Individuals of type 1 are depicted with color Black, type 2 with color Red, and type 3 with color Blue. 
At time zero, the ancestral coloured sequence takes the value $\overline{[6]}_2$, since the unique partition $[6]$ follows an individual type 2. 
We depict this in the three lines above at time 0. 
Just after time $\tau_1$, the ancestral coloured sequence takes the value $\mathcal{T}^{(\cdot)}_{1}=\{\{1,3,4\}\},\emptyset,\{2,5,6\}\}$, since the partition $\{1,3,4\}$ follows a type one individual, and the partition $\{2,5,6\}$ follows a type 3.
Finally, just after time $\tau_2$, the ancestral coloured sequence takes the value $\mathcal{T}^{(\cdot)}_{2}=\{P_{2,1},P_{2,2},\emptyset\}$ where $P_{2,1}=\{\{1\} \}$ and $P_{2,2}=\{\{2,5,6\},\{3\},\{4\} \}$. 
Note that in this case, the ancestral coloured subsequence is $\mathcal{P}^{(\cdot)}_{0}=\{\emptyset,[6],\emptyset\}$, $\mathcal{P}^{(\cdot)}_{1}=(\{1,3,4\},\emptyset,\{2,5,6\})$, $\mathcal{P}^{(\cdot)}_{2}=\{\{3,4\},\{1\},\emptyset\}$, and $\mathcal{P}^{(\cdot)}_{3}=\{\emptyset,\{6\},\{2,5\}\}$. 
 }\label{figNotationColouredPartitionSplittingPartitionProcessV2}	
		\end{figure}

The tree topology of an ancestral coloured sequence is obtained by discarding the colour information. Specifically, from  $(\mathcal{T}^{(\cdot)}_0,\cdots, \mathcal{T}^{(\cdot)}_{M})$ we derive the corresponding sequence of uncoloured partitions $(\Xi^{(\cdot)}_0, \ldots, \Xi^{(\cdot)}_{M})$, denoted as {\it ancestral sequence} or {\it topology}, by  which encodes the hierarchical splitting structure independently of the types.

We now turn to the main objective of this work, namely the description of the joint distribution of all spine splitting events occurring up to a fixed time horizon $T$. Our goal is to characterise, in a unified framework, the full collection of random objects generated by these events: the splitting times, the types of the individuals involved, the offspring configurations produced at each splitting, and the evolution of the induced ancestral coloured subsequence of the label set $[k]$. In particular, we keep track of how the labels of a uniform sample of size  $k$  are redistributed among descendants through successive spine splittings.

To make this precise, we fix $n\leq k-1$ and  assume that exactly $n$ spine splitting events occur before time $T$, that is,   $M=n$ and $0<\tau_1<\cdots <\tau_n<T$. At each splitting time $\tau_h$, $h\in [n]$, a single individual on the spine, denoted by the label $v(h)$,  gives birth to $\bo L_{v(h)}$ new offspring. We denote by  $C_{\tau_h}$  the type (or colour) of the spine individual $v(h)$ involved in the $h$-th splitting event. 
This reproduction event induces a redistribution of a subset of the $k$ sampled marks among the offspring, which we encode by $\mathcal P_{\tau_h}$. 
 
 The sequence of spine splitting events thus generates an ancestral coloured subsequence $\mathcal P=(\mathcal P_{\tau_h})_{h\in [n]}$ of $[k]$, which records the genealogical evolution of the sampled lineages along the spine. For notational convenience, we write $ C_h:=C_{\tau_h}$ and $\mathcal P_h:=\mathcal P_{\tau_h}$. Each element $\mathcal P_h$ takes values of the form  $ \bo P_h=(P_{h,1},\ldots, P_{h,d}),$
  where   $P_{h,m}$ corresponds to offspring of type $m\in[d]$. More precisely $P_{h,m}=\{A_{h,m,q}\}_{q\in [G_{h,m}] }$, 
is a family of $G_{h,m}$ disjoint blocks, each block representing a group of marks that follow the same descendant of type $m$.  This coloured partition therefore simultaneously encodes the offspring structure at the splitting time and the induced redistribution of the sampled lineages.

 We now formalise the event of interest.  Consider the times $0<t_1<t_2<\cdots <t_n<1$, and for each $h\in [n]$ and offspring configuration
 \[
 {\bo g}_{h}:=(g_{h,1},\ldots, g_{h,d})\leq \bm \ell_{h}:=(\ell_{h,1},\ldots, \ell_{h,d})\in \mathbb{Z}^d_+,
 \]
 together with the partition  $\bo P_h$ with ${\bo g}_{h}$ disjoint blocks and a type $i_h\in [d]$. We  define the event
\begin{equation}\label{deltatn}
\Delta_T(n):=\bigcap_{h\in [n]}\left\{\tau_h\in {\rm d}t_hT,\mathcal{P}_{h}=\bo{P}_h,\bo{L}_{v(h)}=\bm{\ell}_h , C_{h}=i_h, M=n \right\}. 
\end{equation}

Moreover, if $A_{h,m,q}$ is a block of $\mathcal{P}_h$, we denote by  $k_{v(h, m,q)}:=\textrm{card}\{A_{h,m,q}\}$ the number of marks associated with the descendant of $v(h)$ corresponding to the block $A_{h,m,q}$ and having type $m$. We denote this descendant  by $v(h,m,q)$.
We refer to Figure \ref{figNotationJointSplittingTimes} for an illustration of this notation.

\begin{center}
	\begin{figure}
		\includegraphics[width=.4\textwidth]{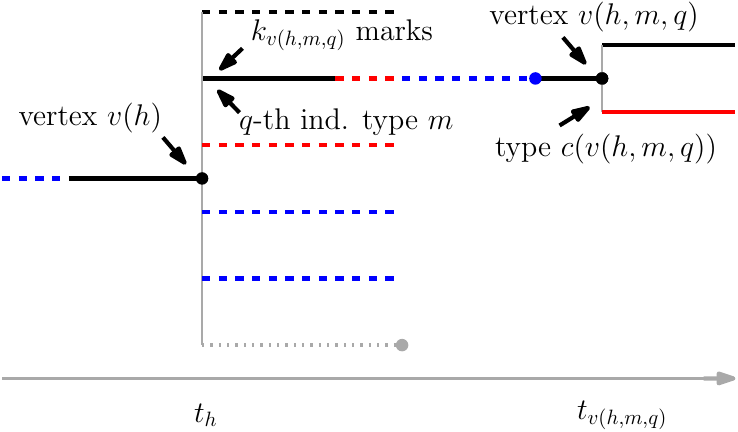}\caption{We show a small window of a multitype tree around a vertex $v(h)$ involved in a spine splitting event at time $t_h$. 
Such a vertex has $L_{v(h)}$ offspring. We represent in gray vertices being born that do not carry marks after time $t_h$. We follow the subtree generated by the $q$-th vertex type $m$ carrying $k_{v(h,m,q)}$ marks (in the picture $q=2$ and $m=1$). Such  vertex, at time $t_{v(h,m,q)}$ is denoted by $v(h,m,q)$, has type $c(v(h,m,q))$ and undergoes a spine splitting event.
}\label{figNotationJointSplittingTimes}
	\end{figure}
\end{center} 
For convenience, we introduce the following notation. For any  $m,j\in \na$, define 
\begin{equation}\label{decfac}
m^{\floor{j}}:=m(m-1)\cdots (m-j+1),
\end{equation}
 the decreasing factorial with the conventions  $m^{\floor{j}}=0$ if $m<j$ and $m^{\floor{0}}=1$.  In particular \[
 N_{t}^{\floor{k}}=N_{t}(N_{t}-1)\cdots (N_{t}-k+1), 
  \]
 with the understanding that  $N_{t}^{\floor{k}}=0$, whenever $N_{t}< k$.

Recall from \cite{AHP-p1} that  sampling $k$ individuals uniformly without replacement at time $T$, conditionally on the event $\{N_T\ge k\}$, induces a probability measure denoted by $ \mathbb{P}^{(k)}_{unif,T,r}$. This measure acts on measurable functionals of the genealogies of $k$-tuples of particles. More precisely, let $f$ be a functional depending on  the ancestral lines of $k$ sampled particles, including their birth and
death times, types, and the number of offspring they have upon death.
Let $\bm{\varsigma}_T=(\varsigma^{(1)}_T, \ldots,\varsigma^{(k)}_T )$ denote a uniform sample without replacement at time $T$, drawn from a MBGW process started from a single  individual of type $r$, and conditioned  on $\{N_T\ge k\}$. Then, accordingly to \cite{AHP-p1},   the probability measure $ \mathbb{P}^{(k)}_{unif,t,r}$ on $\{N_T\ge k\}$ is defined by
\begin{equation}\label{probuniIntro}
 \mathbb{E}^{(k)}_{unif,T,r}\left[f( \bm{\varsigma}_T)\right]=\mathbb{E}_r\left[\frac{1}{N_T^{\floor{k}}}\sum_{\bo{v}\in \mathcal{N}^{(k)}_T} f(\bo{v})\Bigg| N_T\ge k\right]
\end{equation}
where $\mathcal{N}_{t}^{(k)}$ denotes the set of all $k$-tuples of distinct particles alive at time $T$. The prefactor on the right-hand side corresponds to the probability of selecting any given ordered $k$-tuple.

We refer to \cite{AHP-p1}, Section 2.1.1.1, for a formal  construction of $ \mathbb{P}^{(k)}_{unif,T,r}$, and to Section \ref{subsecuniformsampling}  for a concise overview of its properties. For convenience, we recall Theorem 1 from \cite{AHP-p1}.
\begin{teo}\label{propofsplitting1} Suppose that \eqref{hyp1} holds.
For any $k\geq 1$, $T\in \re_+$ and $r\in [d]$, we have
\begin{equation}\label{eqnfsplitting2}
	\begin{split}
\mathbb{P}^{(k)}_{unif,T,r}\paren{\Delta_T(n)}&= \frac{1}{(k-1)!}\frac{1}{\mathbb{P}_{r}\paren{\ N_T\geq k}}\int_0^\infty(e^\phi-1)^{k-1}\Q^{(k),\phi\vec{1}}_{T,r}(\Delta_T(n))\E_{r}\left[N^{\floor{k}}_Te^{-\phi\vec{1}\cdot \bo Z_T}\right]{\rm d}\phi,
\end{split}
\end{equation}
where $\mathbb{Q}^{(k),\phi\mathbf{1}}_{T,r}(\Delta_T(n))$ is given by
\begin{equation}\label{eqnJointSplittingEventWithPartitionChildAndTypeV2}
\begin{split}
\Q^{(k),\phi\vec{1}}_{T,r}(\Delta_T(n))&= \prod_{h=1}^n\prod_{m=1}^d\E_{m}\left[e^{-\phi\vec{1} \cdot \bo Z_{T-t_h}}\right]^{\ell_{h,m} -g_{h,m}}p_{i_h}(\bm \ell_{h})\bm \ell_{h}^{\floor{\bo g_h}}\\
		&\hspace{-1.5cm}  \times 
		\prod_{h=0}^{n-1}\prod_{m=1}^d\prod_{\substack{ q\leq g_{h,m}:\\ k_{v(h,m,q)}\geq 2}}\E_{m}\left[Z^{(c(v(h,m,q)))}_{t_{v(h,m,q)}-t_h} \prod_{j=1}^d\E_{j}\left[e^{-\phi\vec{1}\cdot \bo Z_{T-t_{v(h,m,q)}}}\right]^{ Z^{(j)}_{t_{v(h,m,q)}-t_h}-\delta_{c(v(h,m,q)),j}}\right]\\
	& \hspace{1.5cm} \times
	\frac{\prod_{h=1}^{n}\prod_{m=1}^{d}\E_{m}\left[N_{T-t_{h}}e^{-\phi\vec{1} \cdot \bo Z_{T-t_{h}}}\right]
		^{\#\{ q\leq g_{h,m}:k_{v(h,m,q)}=1\}}
	}{\E_{r}\left[N_T^{\floor{k}}e^{-\phi\vec{1}\cdot \bo Z_{T}}\right]} \prod_{h=1}^n\alpha_{i_{h}} {\rm d}t_h,
	\end{split}
\end{equation}
where  $\delta_{i, m}=1$ if $m=i$, and 0 otherwise. 
\end{teo}
The measure $\Q^{(k),\bm\theta}_{T,r}$, for $\bm \theta\in \mathbb{R}^d_+$, is a probability measure under which the spines are biased so that, at time $T$, they form a uniform sample of $k$ distinct individuals form the population. We again refer to \cite{AHP-p1}, Section 2.1.1.1, or Section \ref{subsecuniformsampling}  for a proper definition and details.

Our first main result extends Theorem \ref{propofsplitting1} to the setting to  uniform sampling without replacement given a fixed type configuration.
Let $\mathbf{c}=(c_1, c_2, \ldots, c_k)$ be fixed. We denote by $\bm{\varsigma}_T=(\varsigma^{(1)}_T, \ldots, \varsigma^{(k)}_T)$ a uniform sample without replacement at time $T$, where the $h$-th sampled individual is of type $c_h$. The sample is drawn from a MBGW process initiated from a single individual of type $r$, and conditioned on $\{Z_T^{(m)} \ge D_m; m \in [d]\}$.
We define the probability measure $\mathbb{P}^{(k)}_{\mathbf{c},T,r}$ on the event $\{Z_T^{(m)} \ge D_m; m \in [d]\}$ by
\begin{equation}\label{probCIntro}
\begin{split}
 \mathbb{E}^{(k)}_{\mathbf{c},T,r}\left[f( \bm{\varsigma}_T)\right]=\mathbb{E}_r\left[ \frac{1}{\prod_{m=1}^d (Z_T^{(m)})^{\floor{D_{m}}}}\sum_{\bo{v}\in \mathcal{N}_{T}^{(\bo c)}} f(\bo{v})\Bigg| Z^{(m)}_T\ge D_m,\, m\in [d]\right], 
 \end{split}
\end{equation}
where $f$ is  as in \eqref{probuniIntro}.

Before stating the result, we recall the notation from Theorem \ref{propofsplitting1} and  introduce additional notation. For each $h\in [n]$, let $\widetilde t_h$ be the first time at which mark $h$ separates from all the other marks. By construction  $\widetilde t_h=t_{v(h',m,q)}$ for some $h'\in [n]$, $m\in [d]$ and $q\in [g_{h',m}]$. We denote by 
 $\widetilde m_h$  the type (or color) of the individual  born at time $\widetilde t_h$, that  carries mark $h$.
This information is equivalently encoded in the sequence of coloured partitions $(\bo P_h;h\in [n])$.
For a configuration $\bo c$,  we define the corresponding    \emph{sample degree}  vector, which records the number of times each type appears in the sample.  Specifically, for  each type $m\in [d]$, the  sample degree of $\bo c$ is  $D_{\bo c,m}:={\rm card}\{h\in [k]: c_h=m  \}$ and the sample degree vector is denoted by $\bo D_{\bo c}:=(D_{\bo c,1},\ldots, D_{\bo c,d})$. Note that  $\sum_{m=1}^dD_{\bo c,m}=k$.
Define $S_{\mathbf{c}} = \{m \in [d] : D_{\mathbf{c},m} \ge 1\}$.

\begin{teo}\label{propofsplitting2Intro} Suppose that \eqref{hyp1} holds.
For any $k\geq 1$, $T\in \re_+$, $r\in [d]$ and $\bo c:=(c_1,\ldots, c_k)\in [d]^k$, we have
\begin{equation}\label{eqpunifcIntro}
\begin{split}
\p^{(k)}_{\mathbf{c},T,r}(\Delta_T(n))&= \frac{1}{\mathbb{P}_{r}\paren{\ \bo Z_T\geq \bo D_{\bo c}}}\left(\prod_{m \in S_{\mathbf{c}}}\frac{1}{(D_{\bo c,m}-1)!}\right)\\
&\hspace{-1cm}\times\int_{\mathbb{R}^{|S_{\mathbf{c}}|}_+} \left(\prod_{m \in S_{\mathbf{c}}} (e^{\phi_m}-1)^{D_{\bo c,m}-1}\right) \Q^{(k),\bm \phi}_{\bo c,T,r}(\Delta_T(n))\E_{r}\left[\prod_{m \in S_{\mathbf{c}}}( Z^{(m)}_T)^{\floor{D_{\bo c,m}}}e^{-\bm \phi\cdot \bo Z_T}\right]{\rm d}\bm \phi
\end{split}
\end{equation}where
\begin{align*}
&\Q^{(k),\bm \phi}_{\bo c,T,r}\Big(\Delta_T(n)\Big)=\Q^{(k),\bm \phi }_{T,r}\Big(\Delta_T(n)\Big)\frac{\E_{r}\left[N^{\floor{k}}_{T}e^{-\bm \phi\cdot \bo Z_{T}}\right]}{\E_{r}[\prod_{m \in S_{\mathbf{c}}}( Z^{(m)}_T)^{\floor{D_{\bo c,m}}}e^{-\bm \phi \cdot \bo Z_{T}}]}\prod_{h=1}^{k}
\frac{\E_{\widetilde m_h}[Z^{(c_h)}_{T-\widetilde t_h}e^{-\bm \phi \cdot \bo Z_{T -\widetilde t_h}}]}{\E_{\widetilde m_h}\left[N_{T -\widetilde t_h}e^{-\bm \phi\cdot \bo Z_{T -\widetilde t_h}}\right]},
\end{align*}and where $\bm \phi$ only has non-zero $m$-th entry  for $m \in S_{\mathbf{c}}$. 
\end{teo}

To state our second main result, we introduce the measure induced by  sampling according to type dependent weights. Let $\bm{\varsigma}_T=(\varsigma^{(1)}_T, \ldots,\varsigma^{(k)}_T )$ be a sample at time $T$ drawn according to  \eqref{multiweightprobability}, from a MBGW process started from a single  individual of type $r$, and  conditioned on  $\{N_T\ge k\}$. 
For $\bo v=(v^{(1)},\ldots, v^{(k)})\in \mathcal{N}_T^{(k)}$, with type-degree vector $\bo D_{\bo v}=(D^{(1)}_{\bo v},\ldots, D^{(d)}_{\bo v})$, 
  the probability measure $ \mathbb{P}^{(k)}_{\bo w,T,r}$ on $\{N_T\ge k\}$ is defined as follows
\begin{equation}\label{probWIntro}
 \mathbb{E}^{(k)}_{\bo w,T,r}\left[f( \bm{\varsigma}_T)\right]=\mathbb{E}_r\left[\frac{1}{\sum_{\tilde{\bo v}\in \mathcal{N}^{(k)}_T}\bo w^{\bo D_{\tilde{\bo v}}}}\sum_{\bo{v}\in \mathcal{N}^{(k)}_T} \bo w^{\bo D_{\bo v}}f(\bo{v})\Bigg| N_T\ge k\right],
\end{equation}
where $f$ is  as in \eqref{probuniIntro}.

\begin{teo}\label{propofsplitting3Intro} Suppose that \eqref{hyp1} holds.
For any $k\geq 1$, $T\in \re_+$, $r\in [d]$ and $\bo w:=(w(1),\ldots,w(d))\in \re^d_+$, we have
\begin{equation}\label{eqnfsplitting3Intro}
\begin{split}
&\mathbb{P}^{(k)}_{\bo w,T,r}\paren{\Delta_T(n)}
= \frac{1}{(k-1)!}\frac{1}{\mathbb{P}_{r}\paren{N_T\geq k}}\\
& \hspace{2cm}\times\int_0^\infty\phi^{k-1}\Q^{(k),\phi\bo w}_{\bo w, T,r}\left[\frac{\big(\bo Z_T\cdot \bo w\big)^k\mathbf{1}_{\Delta_T(n)}}{\sum_{\bo v\in \mathcal{N}^{(k)}_T}\bo w^{\bo D_{\bo v}}}\right]\E_{r}\left[e^{-\phi\bo w\cdot \bo Z_T}\sum_{\bo v\in \mathcal{N}^{(k)}_T}\bo w^{\bo D_{\bo v}}\right]{\rm d}\phi,
\end{split}
\end{equation}
where $\Q^{(k),\phi\bo w}_{\bo w, T,r}$ is a probability measure defined in \eqref{defMeasureQkTrGivenAllInfoS3} which in particular satisfies
\[
\Q^{(k),\phi\bo w}_{\bo w, T,r}\Big(\Delta_T(n)\Big)=\sum_{ \bo c\in \bo [d]^k} \Q^{(k), \phi  \bo w }_{\bo c, T,r}\Big(\Delta_T(n)\Big)\frac{\Q^{(k), \phi\bo w }_{T,r}\paren{c(\bm \varsigma_t)=\bo c}\prod_{h=1}^{k}w(c^{(h)})}{\Q^{(k), \phi\bo w }_{T,r}\left[\bo w^{\bo D_{\bm \varsigma_t}}\right]}.
\]
\end{teo}

Note that Theorem \ref{propofsplitting2Intro} involves a $d$-dimensional integral, reflecting the fact that types are treated separately. In contrast, in Theorem \ref{propofsplitting3Intro}, the sampling depends explicitly on the type composition of the sample, which complicates the structure of the expression. In particular, the term $\mathbb{Q}^{(k),\boldsymbol{\phi}\mathbf{w}}_{\mathbf{w}, T,r}(\Delta_T(n))$ does not appear directly in \eqref{eqnfsplitting3Intro}.  Nevertheless, we will show below that $\Q^{(k),\bm \phi \bo w }_{\mathbf{w}, T,r}(\Delta_T(n))$ and the corresponding term in \eqref{eqnfsplitting3Intro} have the same limit. 

\medskip

Next, we  extend the limiting results of \cite{MR4133376} to the multitype setting  under the three  sampling  schemes introduced above, building on the preceding results.  
In Theorem \ref{mainresult2} below, we  show that when sampling 
$k>1$ individuals from a population observed at large times in a critical MBGW process with finite second moments, the limiting genealogy of the sample is essentially the same across all three sampling procedures. This limiting genealogy exhibits a universal structure: its tree topology and split times, namely the branching structure and the relationships between nodes, independently of types, do not depend on the offspring distribution. In fact, they coincide with those obtained in the single-type case studied in \cite{MR4133376}.

When types are taken into account, however, new phenomena emerge. The type composition at each branching event is strongly influenced by the offspring distribution. Nevertheless, we prove that, in the limit, the type configuration becomes independent of both the tree topology and the split times, thereby providing a clearer description of the asymptotic behaviour of the process.

Throughout the remainder of this section, we assume that 
\[
m_{ij}=\frac{\partial f_i}{\partial r_j}\paren{\vec{1}}<\infty,
\]
where $\vec{1}:=(1, \ldots, 1)\in \mathbb{Z}_+^d$. This condition, in particular, ensures that the process $\mathbf{Z}$ is conservative. When the mean matrix $\mathbf{M}=(m_{ij})_{i,j\in[d]}$ has finite entries and is irreducible, the matrix $\mathbf{C}=\mathrm{diag}(\bm{\alpha})(\mathbf{M}-\mathbf{I})$ is well defined and irreducible as well. By the Perron-Frobenius theorem, $\mathbf{C}$ admits a dominant real eigenvalue $\rho$ with associated positive left and right eigenvectors $\bm{\eta}$ and $\bm{\xi}$, which we normalise as
\begin{equation}\label{eigenvectors}
\mathbf{1}\cdot \bm{\xi}=1,
\qquad
\bm{\eta}\cdot \bm{\xi}=1.
\end{equation}
Accordingly, the process is said to be subcritical, critical, or supercritical depending on whether $\rho<0$, $\rho=0$, or $\rho>0$, respectively.

We now introduce the extinction probability of $\bo{Z}$ as
\[
\bo{q}:=\lim_{t\to \infty} \bo{q}(t)=\lim_{t\to \infty} \paren{q_1(t),\ldots, q_d(t)},
\]
 where for each $i\in [d]$ and $t>0$, 
\[
q_i(t):=\p_{i}\paren{\bo{Z}_t=\bo{0}}.
\]
It is well known that both $\mathbf{q}$ and $\vec{1}$ are fixed points of the equation $\mathbf{f}(\mathbf{r})=\mathbf{r}$, and that under the assumption that the mean matrix has finite entries and is irreducible, the process becomes extinct almost surely if and only if $\rho\le 0$. 

The asymptotic behaviour of the extinction probability is of particular interest in the critical regime (i.e., when $\rho=0$). To describe this behaviour, we impose the following second-moment condition:\begin{equation}\label{secondmoment}
\zeta:=\sum_{i,j,\ell=1}^d\alpha_i\E_i\left[L^{(j)} \big(L^{(\ell)}-\mathbf{1}_{\{j=\ell\}}\big)\right]\eta_i\xi_j\xi_\ell=\sum_{i,j,\ell=1}^d\alpha_i\frac{\partial^2f_i}{\partial r_j\partial r_\ell}\paren{\vec{1}}\eta_i\xi_j\xi_\ell<\infty.
\end{equation}
This assumption ensures that all second-order moments of the offspring distribution are finite, and in particular implies that the entries of the mean matrix are finite.

Throughout the remainder of this section, we work under the following standing assumption,
\begin{equation}\label{hyprifa}\tag{\textbf{H1}}
\textrm{Assumptions \eqref{hyp1},  \eqref{eigenvectors} and \eqref{secondmoment} hold; and $\bo Z$ is critical.}
\end{equation}

Under these conditions, the survival probability admits the following asymptotic behaviour, and a Yaglom-type limit holds.

\begin{propo}\label{propSophie}
Assume that $\rho=0$ and $\zeta<\infty$. Then,  for all $i\in [d]$,
	\[
	1-q_i(t)\sim \frac{2\xi_i}{\zeta }\frac{1}{t}, \qquad \textrm{as} \quad t\to \infty.
	\]
Moreover, 
\[
\left(\frac{\bo{Z}_t}{t}\ \Big| \bo{Z}_t\ne \bo{0}\right) \xrightarrow[t\to\infty]{(d)}\frac{\zeta}{2}\gamma\,\bm \eta, 
\]
where $\gamma$ is a standard exponential r.v. of mean one, and $\stackrel{(d)}{\to}$ denotes convergence in distribution.
\end{propo}
We refer the reader to Sewastjanow \cite{MR0408019} for this and other classical limit results for continuous time MBGW processes.



Our next main result shows that the limits of the probabilities in \eqref{eqnfsplitting2}, \eqref{eqpunifcIntro}, and \eqref{eqnfsplitting3Intro} coincide. This fact is crucial for understanding the limiting genealogy of the ancestral coloured processes $\pi^{(d,k,T,\cdot)}$, and in particular implies that it is identical across all sampling schemes.

 We do not establish full convergence in distribution of these ancestral coloured processes, as our analysis (see Theorems \ref{propofsplitting1}, \ref{propofsplitting2Intro}, and \ref{propofsplitting3Intro}) focuses only on the blocks involved in splitting events, namely the ancestral coloured subsequence. Nevertheless, as we show below, this information is sufficient to characterise the limiting object.

 For  a random variable $\bo L\in \mathbb{Z}^d_+$, we define
\begin{equation}\label{zetayw}
\zeta_i=\alpha_i\eta_i \E_i\left[w({\bo L})\right]
\qquad \mbox{where}\qquad 
w(\bm \ell):=\sum_{\substack{m< n \\ m,n\in [d]}} 2\ell_m\ell_n\xi_{m}\xi_{n}+\sum_{m\in [d]}\ell_m(\ell_{m}-1)\xi_{m}^2.
\end{equation}
 We observe that $\zeta=\sum_{i=1}^d \zeta_i$.

\begin{teo}\label{mainresult2} Consider a continuous-time MBGW tree  rooted at a vertex   of type $r$, associated with a MBGW process $\bo Z$ under $\mathbb{P}_r$,  satisfying \eqref{hyprifa}.  Then 
\begin{equation}\label{onlybin}
\lim_{T\to \infty}\mathbb{P}^{(k)}_{unif,T,r}\left(M=k-1, \sum_{m=1}^d G_{h, m}=2\quad \textrm{for all}\quad h\in [k-1]\right)=1,
\end{equation}
in other words, in the limit there are only binary splittings.  Moreover, let $\mathcal P=(\beta_0, \ldots, \beta_{k-1})$ be any binary ancestral coloured subsequence,  in other words if $\bo g_h=(g_{h,1},\ldots, g_{h,d})$ with $\sum_{m=1}^dg_{h,m}=2$, and where $g_{h,m}$ is the value taken by the r.v. $G_{h,m}$,  then 
\begin{equation}\label{limitsigualdad}
\lim_{T\to \infty}\mathbb{P}^{(k)}_{unif,T,r}\paren{\Delta_T(k-1)}=\lim_{T\to \infty}\mathbb{P}^{(k)}_{\bo c,T,r}\paren{\Delta_T(k-1)}=\lim_{T\to \infty}\mathbb{P}^{(k)}_{\bo w,T,r}\paren{\Delta_T(k-1)},
\end{equation}
where
\begin{equation}\label{dk2withcolors}
\begin{split}
\lim_{T\to \infty}\mathbb{P}^{(k)}_{unif,T,r}\paren{\Delta_T(k-1)}&=\left(\prod_{h=1}^{k-1}\frac{\zeta_{i_h}}{\zeta}\frac{p_{i_h}(\bm \ell_{h})w(\bm \ell_{h})}{\mathbb{E}_{i_h}\left[w(\bo L)\right]}\frac{\bm \ell_{h}^{\floor{\bo g_h}}\bm \xi^{\bo g_h}}{w(\bm \ell_{h})}\right)\\
&\hspace{1cm}\times\frac{2^{k-1}}{(k-1)!}\int_0^\infty  \frac{y^{k-1}}{(1+y)^2} \prod_{h=1}^{k-1}\frac{1}{(1+(1-t_h)y)^2} {\rm d} t_h{\rm d}y,
\end{split}
\end{equation}
with 
${\bm \ell}^{\floor{\bo g_h}}=\prod_{m=1}^d (\ell^{(m)})^{\floor{g_{h,m}}}$ and  ${\bm \xi}^{\bo g_h}=\prod_{m=1}^d \xi_m^{g_{h,m}}.$
\end{teo}
Observe first that the splitting probabilities in the limiting genealogy are independent of the type of the root. Moreover, the limiting genealogy is binary, as established in \eqref{onlybin}, so that each splitting event produces exactly two offspring blocks, which may carry identical or distinct colours.

The integral term on the right-hand side of \eqref{dk2withcolors} fully determines the genealogical structure, namely the tree topology together with the splitting times. Remarkably, this term coincides with  the joint law of the $k-1$ splitting times in the universal binary genealogy  arising in the single-type BGW case; see, for instance, \cite{MR4133376}, Theorem 3, \cite{MR4718398}, Theorem 1.2, or \cite{MR4003147}, equation (3.15).
 We denote this limiting process by $\nu^{(1,k,2)}$. Its splitting times $0<\tau_1<\cdots<\tau_{k-1}<1$ have joint density given by
\begin{align} \label{eq:binarysplits}
f_k(t_1,\ldots,t_{k-1})
= k! \int_0^\infty \left( \prod_{i=1}^{k-1}\frac{\varphi}{\bigl(1+\varphi(1-t_i)\bigr)^2} \right)\frac{1}{(1+\varphi)^2}\,{\rm d}\varphi{\rm d}t_1\cdots{\rm d}t_{k-1}.
\end{align}
The prefactor $2^{k-1}/(k-1)!$ in  \eqref{dk2withcolors}  accounts for the number of admissible ranked binary tree topologies, recalling that there are $k!(k-1)!/2^{k-1}$ such trees with $k$ labelled leaves; see, e.g., \cite{MR727923}. This reflects the intrinsic combinatorial complexity of the genealogy.

The ancestral coloured subsequence $(\beta_0,\ldots,\beta_{k-1})$ then enriches this genealogical structure by specifying, at each splitting event, which block splits, how it splits, and how colours are assigned to the two offspring blocks. In this way, it encodes the type evolution along the genealogy.

Since only ancestral coloured subsequences are observed, one cannot deduce convergence in distribution of the full genealogies of a MBGW tree to a limiting genealogy, in contrast with the single-type setting. Nevertheless, the preceding result identifies the limiting object. We expect that full convergence can still be established, as discussed below Theorem \ref{propoColorOfTheSpineIsMCPUnifLimit}, although additional arguments are required.

To make the picture more explicit, recall that in the single-type case the process $\nu^{(1,k,2)}$ has only binary splittings in which each jump splits a single block into two. If the current state consists of $i$ blocks with sizes $a_1,\dots,a_i$, then the next split occurs in block $j$ with probability $(a_j-1)/(k-i)$. Conditionally on splitting a block of size $a$, the offspring sizes are $(U,a-U)$, where $U$ is uniform on ${1,\dots,a-1}$, independently of the splitting times. Viewed backward in time, $\nu^{(1,k,2)}$ induces the same random tree topology as Kingman's coalescent \cite{MR671034}, in the sense that each pair of blocks merges with equal probability at each step. The same limiting genealogy also arises for Galton-Watson processes in varying environments; see \cite{boenkost2024genealogynearlycriticalbranching,MR4840489}.

Our result shows that, in the multitype setting, the limiting object can be interpreted as a coloured version of this universal binary genealogy. The genealogical structure (topology and splitting times) is governed entirely by $\nu^{(1,k,2)}$, while the types evolve independently along the tree via an additional sampling mechanism.

More precisely, at each splitting event, a block of size $j$ splits into two blocks of sizes $(\mathcal U, j-\mathcal U)$, where $\mathcal U$ is uniform on ${1,\dots,j-1}$. The type of the ancestral lineage immediately before the split is $i$ with probability $\zeta_i/\zeta$. Conditional on this type, the offspring configuration $\bm{\ell}$ is sampled according to the size-biased law
	\[
	\frac{p_i(\bm \ell)w(\bm \ell)}{\E_i\left[w({\bo L})\right]}.
	\]
 Given $\bm{\ell}$, the two offspring that carry the descendant lineages are then selected without replacement with probabilities proportional to the weights $\bm{\xi}$. In particular, the probability that these two offspring have ordered types $(m,n)$ is
\[
\begin{cases}
\frac{\ell_m\ell_n\xi_m\xi_n}{w(\bm \ell)}&\mbox{if $m\neq n$} \\
\frac{\ell_m(\ell_m-1)\xi_m^2}{w(\bm \ell)} &\mbox{if $m= n$}. \end{cases}
\]
Thus, conditionally on the offspring configuration, the two descendant lineages are sampled without replacement from the $\bm{\ell}$ children according to the weights $\bm{\xi}$. This provides a transparent interpretation: the genealogy is universal and binary, while the type evolution arises from a weighted sampling procedure along its branches.

Finally, note that the preceding result describes the splitting times, the tree topology, and the colours immediately before and after each splitting event, but not the evolution of colours between successive splitting times. This is addressed in the next result, where we analyse the colour dynamics prior to the first splitting event. By the Markov branching property, analogous descriptions hold between successive splitting times, allowing one to reconstruct the full colour evolution along the genealogical lines. 

For completeness, we also consider the process $(C_{h})_{h \in [k-1]} $, which highlights the distinction between the colour configuration immediately before a split and the evolution between splitting events. To this end, we introduce $c(\varsigma_{s}^{(1)})$  to denote the colour (or type) of the vertex in the sample $\varsigma^{(1)}$ at time $s$.

\begin{teo}\label{propoColorOfTheSpineIsMCPUnifLimit} Consider a continuous-time MBGW tree  rooted at a vertex   of type $r$, associated with a MBGW process $\bo Z$ under $\mathbb{P}_r$,  satisfying \eqref{hyprifa}. 
For any $n\in \na$ and $r\in [d]$, consider $0<t_1<\cdots <t_n<1$ and $(i_h)_{h\in [n]}\in [d]^{n}$. 
Let $\mathcal{D}_m:=\#\{h\in [n]:i_h=m \}$ for any $m\in [d]$, and define $\overline{\mathcal{D}}:=(\mathcal{D}_1,\ldots, \mathcal{D}_d)$. 
Then, we have
\begin{equation}\label{inhoMarkovsplit}
\lim_{T\to \infty}\mathbb{P}^{(k)}_{unif,T,r}\Big(c(\varsigma_{t_1 T}^{(1)})=i_1,\ldots, c(\varsigma_{t_n T}^{(1)})=i_n,\tau_1>t_n T\Big)
=\bm \eta^{\overline{\mathcal{D}}}\bm \xi^{\overline{\mathcal{D}}}\mathbb{E}\left[ \left(\frac{1-t_n}{1-t_n W}\right)^{k-1}\right] ,
\end{equation}
where $W$ denotes a Beta r.v. with parameters $(k,1)$. In particular, for $t\in(0,1)$,
\begin{equation}\label{firstsplitinglimit1}
\lim_{T\to \infty}\mathbb{P}^{(k)}_{unif,T,r}\Big(\tau_1> tT \Big) 
= \E\left[ \left(\frac{1-t}{1-t W}\right)^{k-1}\right]
\end{equation}
and
\[
\lim_{T\to \infty}\mathbb{P}^{(k)}_{unif,T,r} \left(\left.c(\varsigma_{t_1 T}^{(1)})=i_1,\ldots, c(\varsigma_{t_n T}^{(1)})=i_n\,\right|\,\tau_1> t_nT\right)
=\bm \eta^{\overline{\mathcal{D}}}\bm \xi^{\overline{\mathcal{D}}}.
\]
Moreover, for any $(i_h)_{h\in [k-1]}\in [d]^{k-1}$ 
\[
\lim_{T\to \infty}\mathbb{P}^{(k)}_{unif,T,r} \Big( C_{1}=i_1, \cdots, C_{k-1}=i_{k-1}\Big)
=\prod_{h=1}^{k-1}\frac{\zeta_{i_h}}{\zeta}.
\]
\end{teo}
We emphasise that the laws described in the previous proposition do not depend on the specific values of $(t_1,\ldots, t_{n-1})$. Moreover, the distribution of the most recent common ancestor of the sample, $\tau_1$, coincides with that given in Theorem~3.2 of \cite{MR3570095}. 

\medskip

With all this information at hand, we can now provide a complete description of the limiting genealogy of an MBGW tree satisfying \eqref{hyprifa}. Let us denote this limiting genealogical process by $ \nu^{(d,k,2)}$, and write
$\mathbb{P}^{(d,k,2)}_r$ for  its  law  started from $\overline{[k]}_r$. The process $\nu^{(d,k,2)}$ encodes the ancestral coloured structure of a binary branching tree and evolves as follows:
\begin{enumerate}
\item[i)] The process starts from $\overline{[k]}_r$ under $\mathbb{P}^{(d,k,2)}_r$. 
\item[ii)] A particle carrying $h\in [k]$ spines evolves its subtree forward in time independently of the rest of the process (branching Markov property).
\item[iii)] A particle carrying $h$ spines and alive at time $s\in (0,1)$, has colour $i$ at time $t\in (s,1)$, conditional on no branching occurring in the interval $(s,t]$, with probability $\eta_i\xi_i. $
\item[iv)] A particle carrying $h$ spines and alive at time $s\in (0,1)$, undergoes its first branching event at a time $t\in (s,1)$  according to  the law described in  \eqref{firstsplitinglimit1} (with $k$  replaced by $h$).
\item[v)] A particle carrying $h$ spines has type $i$ at the splitting time $\tau_j-$, $j\in [k-1]$, with probability $\zeta_i/\zeta.$ 
\item[vi)] Given that a particle type $i\in [d]$ carrying $h\in [k]$ spines at time $t\in (0,T)$ branches,  the $h$ spines split into $\bo g=(g_1, \ldots, g_d)$ groups such that $\sum_{m=1}^dg_{m}=2$ with probability
\[
\frac{\mathbb{E}_{i}\left[\bo L^{\floor{\bo g}}\right]{\bm \xi}^{\bo g}}{\mathbb{E}_{i}\left[w(\bo L)\right]}.
\]
\end{enumerate}
Finally, we  provide an intuitive probabilistic construction of $\nu^{(d,k,2)}$, inspired by Aldous to Kingman's coalescent, and extending the construction of $\nu^{(1,k,2)}$ given in \cite{MR4133376}, Theorem 4. We omit the proof, as   it follows directly  from the latter by incorporating an additional  colouring,  which is independent of both the tree topology and the splitting times.

Let $X_1, X_2, \ldots$ be i.i.d.\ random variables on $(0,\infty)$ with density $(1+x)^{-2}$. Define $M_k := \max_{1 \leq i \leq k} X_i, $
and let $I$ be such that $X_I = M_k$. For $1 \leq i \leq k$, set
\[
T_i := 1 - \frac{X_i}{M_k}.
\]
Then similarly as in \cite{MR4133376}, Theorem 4, we have that $(T_1, \ldots, T_{I-1}, T_{I+1}, \ldots, T_k)$
has the same distribution as $(\tau_1, \ldots, \tau^{\,k}_{k-1})$ under $\mathbb{P}^{(d,k,2)}_r$ (its density is given by \eqref{eq:binarysplits}).

Moreover, the ancestral tree with types generated by $k$ uniformly chosen particles admits the following construction. Let $(U_i)_{i\ge 1}$ and $(Y_i)_{i\ge 1}$ be independent sequences such that:
\begin{itemize}
    \item $(U_i)_{i\ge 1}$ are i.i.d.\ uniform random variables on $[0,1]$,
    \item $(Y_i)_{i\ge 1}$ are i.i.d.\ random variables taking values in $[d]^3$ with distribution
\[
\mathbb{P}\big((Y^{(-)}_i, Y^{(+,1)}_i, Y^{(+,2)}_i) = (j,m,n)\big)
= \frac{\zeta_j}{\zeta}
\frac{\mathbb{E}_{j}\!\left[\bo L^{\lfloor \bo g \rfloor}\right]\bm{\xi}^{\bo g}}{\mathbb{E}_{j}\!\left[w(\bo L)\right]},
\]
where $\bo g=(g_1, \ldots, g_d)$ satisfies $g_m + g_n = 2$.
\end{itemize}
The tree is constructed in the unit square as follows.
\begin{itemize}
\item[i)]{\it Vertical segments.}  
For each $1 \leq i \leq k$, draw a vertical segment from $(U_i,0)$ to $(U_i, 1 - T_i)$. These segments represent the initial branches of the tree.

\item[ii)]{\it Horizontal connections and branching points.}  For each $i \in \{1, \dots, k\} \setminus \{I\}$, draw a horizontal segment starting from $(U_i, 1 - T_i)$ toward $(U_I, 1 - T_i)$, stopping at its first intersection with another vertical segment. Each such intersection point represents a branching (or coalescence) event.
At each intersection point corresponding to index $i$, assign the type $Y^{(-)}_i$. At the starting point $(U_i, 1 - T_i)$, assign the type $Y^{(+,2)}_i$, and immediately after the intersection assign the type $Y^{(+,1)}_i$ along the outgoing segment.

\item[iii)]{\it Colour evolution along segments.}  
Conditionally on the tree structure and the types assigned at intersection points, the colours along each segment are described by a collection of independent color processes $(C^{(h)})_{1 \leq h \leq k}$. The law of $C^{(h)}$ is given in Theorem~\ref{propoColorOfTheSpineIsMCPUnifLimit}, where $h$ denotes the number of marks carried by the segment and its lifespan runs  between two consecutive branching points (or between  a branching point and an endpoint if the branch is external). More precisely, consider a segment between two consecutive intersection points (or between  a branching point and an endpoint). If a branching point carries $a$ marks and splits into two groups of sizes $a_{1}$ and $a_{2}$, then along the corresponding descendant segments we run independent colour processes $C^{(a_{j,1})}$ and $C^{(a_{j,2})}$, starting from the types prescribed by the corresponding components of $Y_j$. Each process evolves up to the next intersection point (if any).
\end{itemize}
An illustration of this construction is provided in Figure \ref{figForwardConstructionUsingUniformRVs}.
\begin{center}
		\begin{figure}
			\includegraphics[width=.8\textwidth]{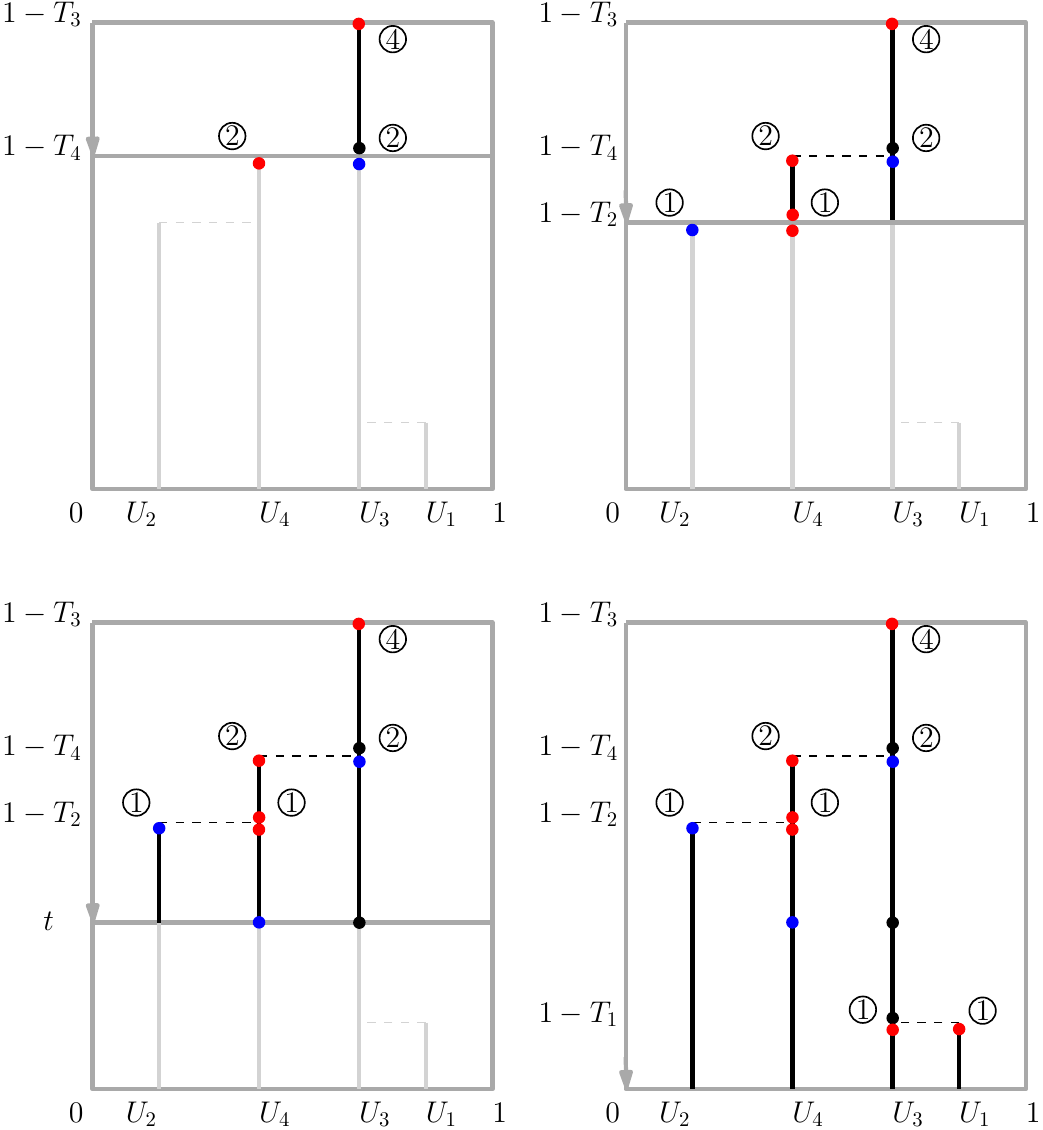}\caption{Probabilistic construction of $\nu^{(3,4,2)}$. In the first picture, we sample $U_1, \ldots, U_4$ i.i.d. uniform r.v.'s and  let  $X_3=\max\{X_1,\ldots,X_4\}$. We then  draw a vertical  segmente from  $(U_3, 1)$ to $(U_3, 1-T_4)$, carrying 4 marks. This initial segment starts with type 2 (Red) and evolves according to  $C^{(4)}$. At time $1-T_4$, the marks splits into 2 blocks of  size 2. Just before the split, we sample a  r.v. $Y_1=(1,3, 2)$: the parent vertex takes colour 1 (Black), while the two offspring segments start with colours 3 (Blue) and 2. From this point on, each segment evolves independently according to a copy of $C^{(2)}$, with its corresponding initial colour and lifetime rescaled to the length of the segment up to the next split time. The construction proceeds recursively in the same way at each split time, until all segments corresponding to the singletons reach the bottom.  In the third picture at time $t$, the colours along the ancestral tree which are obtained from independent copies of  $C^{(1)}$ and  $C^{(2)}$, started from the appropriate colours and run for their respective rescaled lifetimes. }\label{figForwardConstructionUsingUniformRVs}	
		\end{figure}
\end{center}

We briefly outline the strategy used to establish our main results. The first two results, Theorems \ref{propofsplitting2Intro} and \ref{propofsplitting3Intro}, are derived via suitable changes of measure that relate the relevant probabilities to those obtained under uniform sampling in Theorem \ref{propofsplitting1}, the main result of \cite{AHP-p1}.

The central idea is to introduce a collection of distinguished lineages, or \emph{spines}, evolving within a continuous-time MBGW tree initiated from a single individual of type~$r$, and adapted to the chosen sampling scheme. Building on the spine techniques developed by Harris et al.~\cite{AHP-p1}, we construct a change of measure $\Q^{(k),\bm{\theta}}_{\cdot, T,r}$ under which the spines are biased so that, at time~$T$, they form a sample of $k$ individuals consistent with the prescribed sampling procedure. Simultaneously, the population is reweighted by the size-biased functional
\[
\bo z \longmapsto n(n-1)\cdots(n-k+1)\, \mathrm e^{-\bm{\theta}\cdot \bo z},
\]
where $\bo z=(n_1,\ldots,n_d)$ denotes the vector of type counts with total population size 
$n=\sum_{m=1}^d n_m$. 
In the single type setting and uniform sampling of Harris et al.~\cite{MR4133376}, this corresponds to the special case where  $\bm{\theta}$ reduces to the scalar  $\theta=0$. 

The vector $\bm{\theta}$ acts as an exponential discounting parameter, regulating the growth of the tree and enabling an interpretation in terms of sampling from a $k$-fold size-biased multitype tree, even in the absence of higher-order moment assumptions. Related exponential tilting techniques in the single type setting were developed in~\cite{MR4718398}.

Under the change of measure $\Q^{(k),\bm{\theta}}_{\cdot, T,r}$, the model becomes significantly more tractable. Although the formal definitions of these measures are technical and deferred to \eqref{defMeasureQkTrGivenAllInfo}, \eqref{defMeasureQkTrGivenAllInfoS}, and \eqref{defMeasureQkTrGivenAllInfoS3}, an intuitive understanding of the resulting dynamics is essential for the proofs of Theorems \ref{propofsplitting2Intro} and \ref{propofsplitting3Intro}.

To build intuition, we first consider the case of uniform sampling, corresponding to Theorem \ref{propofsplitting1}, proved in \cite{AHP-p1}. This setting captures the key ideas while remaining simpler to describe. Under $\Q^{(k),\bm{\theta}}_{T,r}$, the Ulam--Harris-labelled population $\mathcal N$ is augmented with $k$ distinguished spines. The underlying branching dynamics remain unchanged: individuals evolve as under $\mathbb P_r$, except that some may carry one or more spines, while particles without spines behave exactly as in the original MBGW process. The spines are constrained to be distinct at time~$T$ and, crucially, are distributed at that time as a uniform sample without replacement from the population alive at~$T$. Moreover, conditional on the genealogical structure of the tree up to time~$T$, the spines evolve independently. This conditional independence is the key structural feature that enables explicit computations under $\Q^{(k),\bm{\theta}}_{T,r}$. The final step consists in relating this measure to $\mathbb{P}^{(k)}_{\mathrm{unif},T,r}$ and then invoking Yaglom's limit (see Proposition \ref{propSophie}). This step is technically involved and requires several delicate convergence arguments to identify the limiting genealogy in the critical regime.

The remainder of the paper is devoted to the proofs of the main results. In Section \ref{Spines}, we introduce the multiple-spine framework and the associated changes of measure corresponding to each sampling scheme. Within this framework, Section \ref{subsecuniformsampling} recalls the key structural properties of the spines under $\Q^{(k),\bm{\theta}}_{T,r}$, including a forward construction of the multitype branching tree, as developed in \cite{AHP-p1}.
In Sections \ref{subsecunsamconf} and \ref{subsecunsamweights}, we establish the corresponding structural properties under the measures $\Q^{(k),\bm{\theta}}_{\bo c, T,r}$ and $\Q^{(k),\bm{\theta}}_{\bo w, T,r}$, describe their relationships with $\Q^{(k),\bm{\theta}}_{T,r}$, and rigorously define the associated sampling laws $\mathbb{P}^{(k)}_{\bo c,T,r}$ and $\mathbb{P}^{(k)}_{\bo w,T,r}$.
Finally, building on the main identities from Theorem \ref{propofsplitting1} in the uniform sampling setting and the results developed in Section \ref{Spines}, we prove Theorems \ref{propofsplitting2Intro} and \ref{propofsplitting3Intro} in Section \ref{proofmain}. The subsequent sections are devoted to analysing the large-$T$ asymptotic behaviour of the spines under $\Q^{(k),\bm{\theta}}_{T,r}$ (and the analogous tilted measures for the other sampling schemes), as well as under the corresponding sampling laws induced by the original process. Understanding the limiting joint distributions of the spines constitutes the key step in establishing Theorem \ref{propoColorOfTheSpineIsMCPUnifLimit}.

\subsection{Related literature}

Recent years have seen significant progress in the study of ancestral processes in the single type setting under uniform sampling; see \cite{MR4003147,MR4133376,MR4674065,MR4718398}. Related results for discrete-time BGW trees in varying environments were obtained by Boenkost et al.~\cite{boenkost2024genealogynearlycriticalbranching} and Harris et al.~\cite{MR4840489}. In particular, \cite{MR4003147} analyses the limiting genealogy of subcritical and supercritical continuous-time BGW trees, while \cite{MR4718398} treats the heavy-tailed case, where multiple mergers arise in the limit.

In contrast, genealogical questions for the multitype setting remain less developed. Most existing works focus on large-time behaviour, typically under conditioning on non-extinction.
A systematic treatment is provided by J.-Y. Hong and coauthors \cite{MR2942128,MR3306444,MR3317481,MR3570095,MR3749363}. Hong's thesis \cite{MR2942128} introduces a general framework based on uniform sampling without replacement, tracing ancestral lineages backward until coalescence. Within this setting, the distribution of the most recent common ancestor (MRCA), including its generation, type, and death time, is characterised for both discrete and continuous-time models across all regimes. Further refinements are obtained in subsequent works: the critical case (under finite variance) is studied via an associated point process \cite{MR3570095}, while the subcritical and supercritical cases (under an $X\log X$ condition) yield explicit expressions for the MRCA and related type distributions \cite{MR3317481}. Together, these results provide a detailed description of the MRCA for finite samples.

The work of Foutel-Rodier and Schertzer \cite{MR4664584} is particularly close to ours. They study large-time genealogies of finite samples using a many-to-few formula and moment methods within a general critical branching Markov framework (including MBGW processes), proving convergence to coalescent-type limits described by the marked Brownian coalescing point process. A key contribution is the introduction of spinal probability measures for uniform sampling, combined with a size-biased change of measure preserving the branching structure.
Their approach relies on the existence of moments of order $k$ and a notion of criticality via a harmonic Doob $h$-transform. In contrast, we focus on MBGW processes at fixed times $T$, without requiring criticality, and use different type of sampling procedures. We also obtain explicit descriptions of the full genealogical structure, including splitting times, offspring configurations, and types. In the asymptotic regime, we assume only the existence of a Yaglom limit (equivalently, finite variance in our setting). Extensions beyond finite-moment assumptions and criticality are left for future work , see \cite{AHP-p3, AHP-p4}.
\section{Spines and changes of measures}\label{Spines}

 Throughout, we use $\mathbb{R}_+:= [0,\infty)$ and adopt the standard Ulam-Harris labelling system to encode the genealogical structure of particles.  We  recall that $\mathbf{Z}=(\mathbf{Z}_t)_{t\ge 0}$ is a continuous time $\mathbb{Z}^d_+$-valued BGW branching process with  probabilities $(\mathbb{P}_{\mathbf{z}})_{\mathbf{z}\in \mathbb{Z}_+^d}$ on  the filtered probability space $(\Omega, \mathcal{F}, (\mathcal{F}_t)_{t\ge 0})$.

\subsection{Spines \&  measure $\mathbb{P}^{(k)}_r$}\label{subsectionChangeOfMEasure}

Let  $k\in \mathbb{N}$ and $r\in[d]$ be fixed. We briefly describe the reference measure $\mathbb{P}^{(k)}_r$, introduced in \cite{AHP-p1},  which plays a central role in our analysis.  

Under $\mathbb{P}^{(k)}_r$,  the population process $\mathcal{N}=(\mathcal{N}_t)_{t\ge 0}$, started from a single ancestor of type $r$, is equipped with  $k$ distinguished lines of descent, called {\it spines}. 
 We denote the spines  by  $\bm{\varsigma}=(\varsigma^{(1)},\ldots, \varsigma^{(k)})$, where $\varsigma^{(i)}$ represents the $i$-th \emph{spine}. 
 
Each spine is a path in the genealogical tree, encoded by a sequence of Ulam-Harris labels $v_0 v_1 v_2 \dots$ with $v_0=\emptyset$ and $v_{i+1}=v_i\ell$ for some $\ell\in\{1,\dots,\bo 1\cdot \bo L_{v_i}\}, $ for $m\in [d]$,  where $\bo{L}_{u}=(L^{(1)}_u, \ldots, L^{(d)}_u)$ denotes the offspring of particle $u$ which has  distribution $\bo{p}$.  A spine may be an infinite line of descent, or a finite path which terminates at a leaf in the underlying genealogical tree of the population. If a particle $u$  has $j$ distinct spines passing though it, we say that $u$ carries $j$ spines.

The couple $(\mathcal{N},\bm{\varsigma})$ under $\mathbb{P}^{(k)}_r$ is constructed as an extension of the original MBGW process: particles evolve as under $\mathbb{P}_r$, while marks (spines) are assigned and propagated as follows:
\begin{enumerate}
	\item the initial particle carries $k$ marks,
	\item each mark follows a \emph{spine},  
	\item particles branch according to the original offspring law,
	\item 
	given that $\bm{\ell}=(\ell_1,\ldots,\ell_d)$ particles, say $w_1,\ldots, w_{\ell_m}$, type $m$ are born at a branching event as above, the $q$ marks each choose independently to follow type $m$ with probability $\ell_m\xi_m /\bm{\ell}\cdot \bm{\xi}$, and then they follow particle $w_{h}$ with probability $1/\ell_m$ for $h\in [\ell_m]$. 
\end{enumerate}
In Figure \ref{figTreeUnderP8}, we show an example of a MBGW tree under $\p^{(8)}_2$. 
\begin{center}
		\begin{figure}
			\includegraphics[width=.6\textwidth]{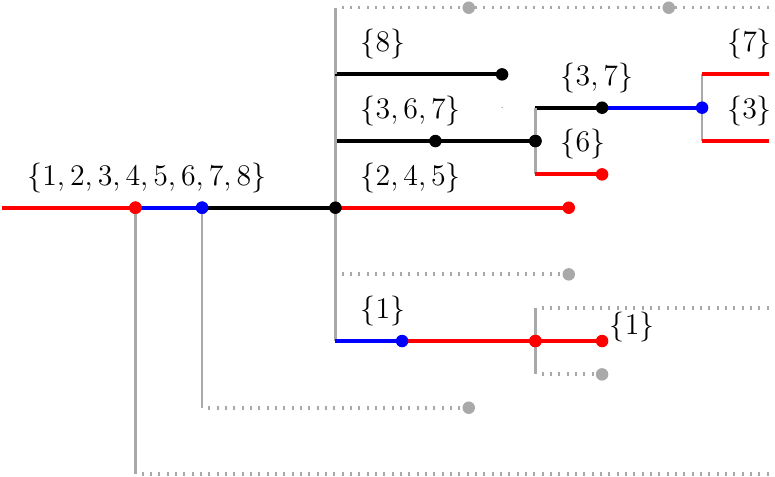}\caption{3-type MBGW tree under $\p^{(8)}_2$, where individuals type 1 are depicted with color Black, type 2 with color Red, and type 3 with color Blue. The time of death of a particle is represented by a dot of its color. The dotted lines represent those particles that carry no marks. }\label{figTreeUnderP8}	
		\end{figure}
\end{center}
Let $\F^{(k)}_t$ denote the filtration containing both the tree and spine information up to time $t$, and write $\bm{\varsigma}_t=(\varsigma_t^{(1)},\ldots,\varsigma_t^{(k)})$ for the particles carrying the spines at time $t$. For $h\in [k]$, we define 
\[
\textrm{spine}(\varsigma_t^{(h)}):=\Big((u^{(h)}_1,c^{(h)}_{2}),(u^{(h)}_2,c^{(h)}_{3}),\ldots, (u^{(h)}_{m-1},c^{(h)}_{m}),(u^{(h)}_{m},c^{(h)}_{m+1})\Big)
\]
where $m$ is such that $\varsigma_t^{(h)}=u^{(h)}_m$,   be the spine generated by $\varsigma_t^{(h)}$ together with the color of the vertex that the $h$-th mark decided to follow.

Assume that each vertex $u^{(h)}_{g}$ has children $\bo{L}_{h,g}:=(\ell^{(1)}_{h,g},\ldots, \ell^{(d)}_{h,g})$. 
By definition, for each label $v$ we have
\begin{align*}
	\p^{(k)}_r\paren{\left.\varsigma_t^{(h)}=v\right|\ \F_t}&=\frac{\ell^{(c^{(h)}_{2})}_{h,1}\xi_{c^{(h)}_{2}}}{\bo{L}_{h,1}\cdot \bm{\xi}}\frac{1}{\ell^{(c^{(h)}_{2})}_{h,1}}\frac{\ell^{(c^{(h)}_{3})}_{h,2}\xi_{c^{(h)}_{3}}}{\bo{L}_{h,2}\cdot \bm \xi}\frac{1}{\ell^{(c^{(h)}_{3})}_{h,2}}\cdots \frac{\ell^{(c^{(h)}_{m})}_{h,m}\xi_{c^{(h)}_{m+1}}}{\bo{L}_{h,m}\cdot \bm \xi }\frac{1}{\ell^{(c^{(h)}_{m+1})}_{h,m}} = \prod_{g=1}^{m}\frac{\xi_{c^{(h)}_{g+1}}}{ \bo{L}_{h,g}\cdot\bm \xi},
	\end{align*}
or equivalently 
\[
\p^{(k)}_r\paren{\left.\varsigma_t^{(h)}=v\right|\ \F_t}=\prod_{(w,c_w)\in \textrm{spine}(v)}\frac{\xi_{c_{w}}}{\bo L_{w}\cdot \bm \xi },
\]
where $c_w$ represents the color of the offspring of $w$ that  is followed by the mark, and $\bo L_{w}$ denotes the offspring of $w$. 

 Recall that  $\mathcal{N}_{t}^{(k)}$ denote the set of all possible $k$-tuples of particles which are alive at time $t$ and  observe that $N_{t}^{\floor{k}}$ is precisely the cardinality of $\mathcal{N}_{t}^{(k)}$.
  
 \subsection{Uniform sampling}\label{subsecuniformsampling} Following \cite{AHP-p1}, we  introduce the probability  measure $\Q^{(k),\bm{\theta}}_{t,r}$,  under which the $k$-spines constitute a uniform sample without replacement from the particles alive at time $t$. This measure is defined via a change of measure from $\mathbb{P}_r^{(k)}$ and yields a more transparent description of the sampling process and renders many functionals of interest more tractable.
 
 Define
\begin{equation}\label{eqnDefinitionOfG_ktAndZeta_kt}
\begin{split}
g_{k,t}:
=\sum_{\bo{v}\in \mathcal{N}_{t}^{(k)}}\mathbf{1}_{\{\bo{\varsigma}_t=\bo{v}\}}\prod_{h\in [k]}\prod_{(w,c_w)\in \mathrm{spine}(v^{(h)})}\frac{ \bo{L}_{w}\cdot \bm{\xi}}{\xi_{c_{w}}},
\end{split}
\end{equation}
with the convention that $g_{0,t}=1$. For $\bm{\theta}\in \mathbb{R}_+^{d}$, define the change of measure 
\begin{equation}\label{defMeasureQkTrGivenAllInfo}
	\left.\frac{{\rm d}\Q^{(k),\bm{\theta}}_{t,r}}{{\rm d}\p^{(k)}_r}\right|_{\F^{(k)}_t}:=\frac{g_{k,t}e^{-\bm \theta\cdot \bo Z_{t}}}{\E^{(k)}_r\left[N^{\floor{k}}_{t}e^{-\bm \theta\cdot \bo Z_{t}}\right]}, \qquad t\ge 0.
\end{equation}
Since $
 \E^{(k)}_r\left[\left. g_{k,t} \right|\ \F_t\right]=N^{\floor{k}}_{t}$, the measure  $\Q^{(k),\bm{\theta}}_{t,r}$ is  a probability  and satisfies 
 \begin{equation}\label{defMeasureQkTrGivenTopologicalInfo}
\left.\frac{{\rm d}\Q^{(k),\bm{\theta}}_{t,r}}{{\rm d}\p^{(k)}_r}\right|_{\F_t}=\frac{N^{\floor{k}}_{t}e^{-\bm \theta\cdot \bo Z_{t}}}{\E^{(k)}_r\left[N^{\floor{k}}_{t}e^{-\bm \theta\cdot \bo Z_{t}}\right]}.
\end{equation}

By \cite{AHP-p1},  under the measure $\Q^{(k),\bm \theta}_{t,r}$, the $k$-spines are a uniform choice without replacement from all  particles alive at time $t$, that is
\begin{equation}\label{eqnQIsUniformGivenF0}
	\Q^{(k),\bm \theta}_{t,r}\paren{\left.\bm \varsigma_t=\bo v \right|\ \F_t}= \frac{1}{N^{\floor{k}}_{t}}.
\end{equation}
Consequently, 
 \begin{equation}\label{defMeasureQkTrGivingTwoSteps}
	\left.\frac{{\rm d}\Q^{(k),\bm \theta}_{t,r}}{{\rm d}\p^{(k)}_r}\right|_{\F^{(k)}_t}=\frac{N^{\floor{k}}_{t}e^{-\bm \theta\cdot \bo Z_{t}}}{\E^{(k)}_r\left[N^{\floor{k}}_{t}e^{-\bm \theta\cdot \bo Z_{t}}\right]}\frac{1}{N^{\floor{k}}_{t}}g_{k,t},
\end{equation}
which states that we first apply $k$-size biasing and $\bm \theta$-discounting to the process given $\F_t$, and then, conditional on $\F_t$ under $\Q^{(k),\bm \theta}_{t,r}$, we select $k$ spines uniformly without replacement.

It is important to note that under $\mathbb{Q}^{(k),\bm{\theta}}_{t,r}$, the population process $\mathcal{N}$ admits a forward-in-time description as a size-biased and discounted multitype Galton-Watson process with $k$ spines see \cite{AHP-p1}, Proposition 1.  Moreover, under $\mathbb{Q}^{(k),\bm{\theta}}_{t,r}$, the  process $\mathcal{N}$ satisfies the branching Markov property. Specifically,  a particle carrying $h\in [k]$ spines with type $i$ and alive at time $s$ generates a subtree that evolves independently of the rest of the population,  with law $\mathbb{Q}^{(h),\bm{\theta}}_{t-s,i}$. 

In particular, particles carrying spines evolve independently and reproduce according to a tilted offspring distribution. More precisely,  consider a particle type $i\in [d]$ carrying $h\in [k]$ spines at time $s\in (0,t)$.  This particle branches into $\bm \ell\in \z^d_+\setminus\{\bo 0\}$ offspring, and the $h$ spines are distributed among the offspring as follows: for each $m\in [d]$, the spines are split into $g_m\in \{0,1,\ldots,\ell_m\}$ groups, where the $q$-th group carries $a_{m,q} \in [h]$ marks, with $\sum_{m\in[d]} \overline{a}_{m}=h$ and $q\in [g_m]$. This branching event occurs  at rate
	\begin{equation}
		\begin{split}
			& \alpha_i\E_i\left[\bo L^{\floor{\bo g}}\prod_{m=1}^d\E_m\left[e^{-\bm \theta \cdot \bo Z_{t-s}}\right]^{L^{(m)}-g_m}\right]
			\frac{\bm \ell^{\floor{\bo g}}			\prod_{m=1}^d\E_m\left[e^{-\bm \theta \cdot \bo Z_{t-s}}\right]^{\ell_m-g_m}p_i(\bm{\ell})}{\E_i\left[\bo L^{\floor{\bo g}}\prod_{m=1}^d\E_m\left[e^{-\bm \theta \cdot \bo Z_{t-s}}\right]^{L^{(m)}-g_m}\right]}\\
			& \hspace{7cm} \times
			\frac{\prod_{\substack{m\in [d]\\ g_m\neq 0}}\prod_{q=1}^{g_m}\E_m\left[N^{\floor{a_{m,q}}}_{t-s}e^{-\bm \theta \cdot \bo Z_{t-s}}\right]}{\E_i\left[N^{\floor{h}}_{t-s}e^{-\bm \theta\cdot \bo Z_{t-s}}\right]}.
		\end{split}
\end{equation}
Furthermore, the allocation of spines among the offspring is exchangeable and depends only on the group sizes; see \cite{AHP-p1}, Proposition 1 part (4).

 Particles that do not carry spines evolve independently according to a corresponding tilted dynamics. More precisely, 
a particle $v$ of type $i$, alive at time $s$ and carrying no spines,  undergoes branching  and produces $\bm \ell$  offspring at rate
\[
\alpha_i\frac{\E_i\left[\prod_{m=1}^d\E_m\left[e^{-\bm \theta\cdot \bo Z_{t-s} }\right]^{ L^{(m)}} \right]}{\E_i\left[e^{-\bm \theta\cdot \bo Z_{t-s} }\right]}p_i(\bm \ell)\frac{\prod_{m=1}^d\E_m\left[e^{-\bm \theta\cdot \bo Z_{t-s} }\right]^{\ell_m}}{\E_i\left[\prod_{m=1}^d\E_m\left[e^{-\bm \theta\cdot \bo Z_{t-s} }\right]^{ L^{(m)}} \right]}.
\]

Next, we  introduce the probability measure $\p^{(k)}_{unif,t,r}$, which corresponds to uniform sampling without replacement from the population at time $t$.  For any $A\in \F^{(k)}_t$, define
\begin{equation}\label{probuni1}
\left.\frac{{\rm d}\p^{(k)}_{unif,t,r}}{{\rm d}\p^{(k)}_r}\right|_{\F^{(k)}_t}:=\frac{1}{\p_r\paren{N_t\geq k}}\frac{g_{k,t}}{N^{\floor{k}}_{t}},
\end{equation}
which indeed defines  a  probability measure (see  \cite{AHP-p1}).

According to \cite{AHP-p1}, if $\bm \varsigma_t$ is a uniform sample without replacement at time $t$ and $\bo v\in \mathcal{N}^{(k)}_t$, then 
\[
\mathbf{1}_{\{N_t\geq k\}}\p^{(k)}_{unif,t,r}\paren{\left.\bm \varsigma_t=\bo v \right|\ \F_t}
	= \mathbf{1}_{\{N_t\geq k\}}\frac{1}{N^{\floor{k}}_{t}}.
\]
Moreover, for  a function of $k$ distinct vertices of the tree at time $t$, say $f$,  and $A\in \F_t$; we have
\begin{equation}\label{probuni}
\p^{(k)}_{unif,t,r}(A)=\p_r\paren{A\big| N_t\geq k}\quad \mbox{and} \quad \mathbb{E}^{(k)}_{unif,t,r}\left[f( \bm{\varsigma})\right]=\mathbb{E}\left[\frac{1}{N^{\floor{k}}_t}\sum_{\bo{v}\in \mathcal{N}^{(k)}_t} f(\bo{v})\Bigg| N_t\ge k\right].
\end{equation}
On the event $\{N_t\geq k\}$, the Radon-Nikodym derivative of $\Q^{(k),\bm \theta}_{t,r}$ with respect to $\p^{(k)}_{unif,t,r}$ is given by
\begin{equation}\label{defMeasureQkTrandPUnif}
\begin{split}
	\left.\frac{{\rm d}\Q^{(k),\bm \theta}_{t,r}}{{\rm d}\p^{(k)}_{unif,t,r}}\right|_{\F^{(k)}_t}& =\frac{N^{\floor{k}}_{t}e^{-\bm \theta\cdot \bo Z_{t}}}{\E^{(k)}_{unif,t,r}\left[N^{\floor{k}}_{t}e^{-\bm \theta\cdot \bo Z_{t}}\right]}.
\end{split}
\end{equation}
In other words,  when passing from $\p^{(k)}_{unif,t,r}$ to $\Q^{(k),\bm \theta}_{t,r}$, events in $\F^{(k)}_t$ are modified  through the tree topology (size-biasing and discounting), while the marks remain  unaffected. Furthermore, for $A\in \F^{(k)}_t$, 
	\[
\p^{(k)}_{unif,t,r}\big( A\big)=\E^{(k)}_r\Big[\left.N^{\floor{k}}_{t}e^{-\bm \theta\cdot \bo Z_{t}}\right|\ N_t\geq k\Big]\Q^{(k),\bm \theta}_{t,r}\left[\frac{\mathbf{1}_{A}}{N^{\floor{k}}_{t}e^{-\bm \theta\cdot \bo Z_{t}}}\right].
		\]
		This property relationship plays a key role in the proof of Theorem \ref{propofsplitting1}. 
		
Finally, we emphasise that the law of the underlying MBGW tree is the same under $\mathbb{P}^{(k)}_{\mathbf{z}}$ as under $\mathbb{P}_{\mathbf{z}}$, for $\mathbf{z}\in \mathbb{Z}_+^d$, that is, for all $t\ge 0$,
\[
\mathbb{P}^{(k)}_{\mathbf{z}}=\mathbb{P}_{\mathbf{z}}\qquad \textrm{on } \mathcal{F}_t.
\] 
Further details can be found in Section 2.1.1 of \cite{AHP-p1}.

\subsection{Uniform sampling  given a fixed type configuration}\label{subsecunsamconf}
We now introduce the auxiliary measure, analogous to the uniform sampling framework, under which the genealogical tree evolves as a branching process, thus enabling a more tractable analysis of the sampling procedure. To this end, let $\bo c:=(c_1,\ldots, c_k)$ be a vector with entries $c_h\in [d]$ for all $h\in [k]$,  representing a prescribed configuration. 
Recall that the sample degree vector is denoted by $\bo D_{\bo c}=(D_{\bo c,1},\ldots, D_{\bo c,d})$.

Let $\mathcal{N}^{(\bo c)}_{t}$ denote the set of distinct $k$-tuples $(v^{(1)},\ldots,v^{(k)})$ of individuals in $\mathcal{N}_{t}$, 
such that the  $h$-th individual has type  $c_h$, for each $h\in [d]$.  In other words, we select $k$ individuals  alive at time $t$ matching the  type configuration $\mathbf{c}$.  The number of such $k$-tuples is 
\[
\textrm{card}\{\mathcal{N}^{(\bo c)}_{t}\}=\bo Z^{\floor{\bo D_{\bo c}}}_t=\prod_{m=1}^d\big(Z^{(m)}_t\big)^{\floor{D_{\bo c,m}}}=\prod_{m=1}^d Z^{(m)}_{t}(Z^{(m)}_{t}-1)\cdots (Z^{(m)}_{t}-D_{\bo c,m}+1),
\]
 provided that  $Z^{(m)}_{t}\geq D_{\bo c,m}$, for all $m\in [d]$;  and is zero otherwise.
 
  This  follows since, for each type 
 $m$, there are $Z_t^{(m)}$ individuals of that type, and we  choose $D_{\mathbf{c},m}$ of them  without replacement. The number of  ordered selections is  the falling factorial $(Z^{(m)}_{t})^{\floor{D_{\bo c,m}}}$, and the  total count is obtained by multiplying  over all types.

Similarly as in \eqref{eqnDefinitionOfG_ktAndZeta_kt}, we observe that our proposed Radon-Nikodym derivative might be of the form 
\begin{equation}\label{eqnDefinitionOfG_ktAndZeta_ktS}
\begin{split}
g_{\bo c,t}&:=g_{k,t}\mathbf{1}_{\{c( \varsigma^{(h)}_t)=c_h,\forall\ h\in[k]\}}\\
&=\mathbf{1}_{\{\varsigma^{(i)}_t\neq \varsigma^{(j)}_t,\mbox{ for }\ i\neq j,c( \varsigma^{(h)}_t)=c_h,\forall\ h\in[k]\}}\prod_{h\in [k]}\prod_{g\in [t-1]}\frac{ \bo{L}_{h,g}\cdot \bm{\xi}}{\xi_{c_{g+1}^{(h)}}}\\
&=\sum_{\bo{v}\in \mathcal{N}_{t}^{(\bo c)}}\mathbf{1}_{\{\bm \varsigma_t=\bo{v}\}}\prod_{h\in [k]}\prod_{(w,c_w)\in \mathrm{spine}(v^{(h)})}\frac{ \bo{L}_{w}\cdot \bm{\xi}}{\xi_{c_{w}}},
\end{split}
\end{equation}
with the convention that $g_{\bo c,t}=1$ in the case $k=0$ with $\bo c=\emptyset$. For $\bm{\theta}\in \mathbb{R}_+^{d}$, we also define
\begin{equation}\label{cebolla}
 \zeta^{\bm{\theta}}_{\bo c,t}:=\frac{g_{\bo c,t}e^{-\bm \theta\cdot \bo Z_{t}}}{\E_r\left[\bo Z^{\floor{\bo D_{\bo c}}}_te^{-\bm \theta\cdot \bo Z_{t}}\right]}.
\end{equation}Then, for $t>0$ and $r\in [d]$,  it is clear that,
\[
\begin{split}
 \E^{(k)}_r\left[\left. g_{\bo c,t} \right|\ \F_t\right]&= \sum_{\bo{v}\in \mathcal{N}_{t}^{(\bo c)}}\prod_{h\in [k]}\prod_{(w,c_w)\in \mathrm{spine}(v^{(h)})}\frac{ \bo{L}_{w}\cdot \bm{\xi}}{\xi_{c_{w}}}\p^{(k)}_r\paren{\left.\bo{\varsigma}_t=\bo{v} \right|\ \F_t}={\rm card}\{\mathcal{N}_{t}^{(\bo c)}\}=\bo Z^{\floor{\bo D_{\bo c}}}_t,
\end{split}
\] 
and thus
\begin{equation}\label{tomate1}
 \E^{(k)}_r\left[\left.  g_{\bo c,t}e^{-\bm \theta\cdot \bo Z_{t}} \right|\ \F_t\right]
=\bo Z^{\floor{\bo D_{\bo c}}}_te^{-\bm \theta\cdot \bo Z_{t}}.
\end{equation}
The latter suggest that we may construct, for $t>0$,  the  following probability measure 
\begin{equation}\label{defMeasureQkTrGivenAllInfoS}
	\left.\frac{{\rm d}\Q^{(k),\bm{\theta}}_{\bo c,t,r}}{{\rm d}\p^{(k)}_r}\right|_{\F^{(k)}_t}:=\zeta^{\bm{\theta}}_{\bo c,t}.
\end{equation}
Since $\E^{(k)}_r\left[\left.g_{\bo c,t} \right|\ \F_t\right]= \bo Z^{\floor{\bo D_{\bo c}}}_t$, it follows 
\begin{equation}\label{defMeasureQkTrGivenTopologicalInfoS}
\left.\frac{{\rm d}\Q^{(k),\bm{\theta}}_{\bo c,t,r}}{{\rm d}\p^{(k)}_r}\right|_{\F_t}=\frac{\bo Z^{\floor{\bo D_{\bo c}}}_te^{-\bm \theta\cdot \bo Z_{t}}}{\E_r\left[\bo Z^{\floor{\bo D_{\bo c}}}_te^{-\bm \theta\cdot \bo Z_{t}}\right]}.
\end{equation}We  say that this change of measure, applies a $\bo D_{\bo c}$-size biasing and $\bm \theta$-discounting to the process.

From \eqref{defMeasureQkTrGivenAllInfo} and \eqref{defMeasureQkTrGivenAllInfoS}, we observe that  the probability measures $\Q^{(k),\bm{\theta}}_{\bo c,t,r}$ and $\Q^{(k),\bm{\theta}}_{t,r}$, satisfy the following relationship
\begin{equation}\label{eqnComparingQSAndQk}
\left.\frac{{\rm d}\Q^{(k),\bm{\theta}}_{\bo c,t,r}}{{\rm d}\Q^{(k),\bm{\theta}}_{t,r}}\right|_{\F^{(k)}_t}=\frac{\mathbf{1}_{\{c( \varsigma^{(h)}_t)=c_h,\forall\ h\in[k]\}}}{\Q^{(k),\bm \theta }_{t,r}(c( \varsigma^{(h)}_t)=c_h,\forall\ h\in[k])}.
\end{equation}
The denominator can be computed, using \eqref{cebolla}, \eqref{defMeasureQkTrGivenAllInfo} and \eqref{eqnDefinitionOfG_ktAndZeta_ktS}. Indeed, we have 
\begin{align}\label{eqnonumber}
\Q^{(k),\bm \theta }_{t,r}(c( \varsigma^{(h)}_t)=c_h,\forall\ h\in[k])& =\frac{\E^{(k)}_r\left[g_{\bo c,t}e^{-\bm \theta\cdot \bo Z_{t}}\right]}{\E_r\left[N^{\floor{k}}_te^{-\bm \theta\cdot \bo Z_{t}}\right]}=\frac{\E_r\left[\bo Z^{\floor{\bo D_{\bo c}}}_te^{-\bm \theta\cdot \bo Z_{t}}\right]}{\E_r\left[N^{\floor{k}}_te^{-\bm \theta\cdot \bo Z_{t}}\right]}.
\end{align}

We now introduce the uniform measure $\p^{(k)}_{\bo c,t,r}$, which samples $k$ individuals at time $t$  uniformly among those whose   types match the configuration $\bo c$.  We firs show that $\mathbb{Q}^{(k),\bm \theta}_{\bo c, t,r}$ corresponds to uniform sampling  without replacement from the  individuals alive at time $t$, restricted to those with type configuration   $\bo c$.  To this end,  we apply the following useful result which  is in \cite{MR4133376}, Lemma 13. We recall it for simplicity on exposition.

\begin{lemma}\label{lemHJR} Let $\mu$ and $\nu$ be two probability measures on a $\sigma$-algebra $\F$ and $\G\subseteq \F$ is also a $\sigma$-algebra with Radon-Nikodym derivatives
\[
\left.\frac{{\rm d}\mu}{{\rm d}\nu}\right|_{\F}=:Y\qquad \mbox{ and }\qquad \left.\frac{{\rm d}\mu}{{\rm d}\nu}\right|_{\G}=:Z.
\]
Then for any non-negative random variable $X$, $\F$-measurable, we have
\[
Z\mu[X |\, \G]=\nu[XY|\, \G]\qquad \nu-a.s.
\]
\end{lemma}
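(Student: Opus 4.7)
The plan is to verify that $Z\,\mu[X|\,\G]$ satisfies the two characterizing properties of the $\nu$-conditional expectation $\nu[XY|\,\G]$: it is $\G$-measurable, and it integrates correctly against every bounded nonnegative $\G$-measurable test random variable. Measurability is immediate since $Z$ is $\G$-measurable by hypothesis and $\mu[X|\,\G]$ is $\G$-measurable by construction.

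The first step I would carry out is to record the useful preliminary observation that $Z=\nu[Y|\,\G]$ $\nu$-almost surely. This follows by checking, for any $G\in\G$, that $\nu[\mathbf{1}_G Z]=\mu(G)=\nu[\mathbf{1}_G Y]$ from the two given Radon-Nikodym identities; since $Z$ is $\G$-measurable, it must be a version of the $\nu$-conditional expectation of $Y$ given $\G$. This is the one non-tautological input.

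Next I would fix an arbitrary bounded nonnegative $\G$-measurable $W$ and compute $\nu[WXY]$ in two ways. First, absorbing $Y$ into the measure turns $\nu[WXY]$ into $\mu[WX]$; then the $\mu$-tower property (applied with $W$, which is $\G\subseteq\F$ measurable) gives $\mu[W\,\mu[X|\,\G]]$; and transporting back to $\nu$ via $Y$ yields $\nu[W\,Y\,\mu[X|\,\G]]$. Second, since $W\,\mu[X|\,\G]$ is $\G$-measurable, I pull it out of a $\nu$-conditional expectation, replace $\nu[Y|\,\G]$ by $Z$ using the preliminary step, and obtain $\nu[W\,Z\,\mu[X|\,\G]]$. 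Comparing the two expressions gives $\nu[WXY]=\nu[W\,Z\,\mu[X|\,\G]]$ for every such $W$, which by the defining property of conditional expectation forces $Z\,\mu[X|\,\G]=\nu[XY|\,\G]$ $\nu$-a.s.

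There is no substantive obstacle: the proof is a clean instance of the abstract Bayes formula, and the identification $Z=\nu[Y|\,\G]$ is the only conceptual ingredient. The only minor technicality is the usual one for possibly unbounded nonnegative $X$, which is dispatched by standard truncation and monotone convergence; neither side of the claimed identity requires finiteness, and all manipulations respect values in $[0,\infty]$.
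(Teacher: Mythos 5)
Your proof is correct. Note, however, that the paper itself does not prove this lemma: it is quoted verbatim as Lemma~13 of Harris, Johnston and Roberts \cite{MR4133376}, so there is no paper proof to compare against.

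Your argument is a clean and valid instance of the abstract Bayes formula: you first identify $Z=\nu[Y\,|\,\G]$ and then test $Z\,\mu[X\,|\,\G]$ against arbitrary bounded nonnegative $\G$-measurable $W$, reconciling two computations of $\nu[WXY]$ via the tower property and two RN transports. This is fine, but the intermediate identification $Z=\nu[Y\,|\,\G]$ can be bypassed entirely. For $G\in\G$ one has directly
\[
\int_G Z\,\mu[X\,|\,\G]\,{\rm d}\nu \;=\; \int_G \mu[X\,|\,\G]\,{\rm d}\mu \;=\; \int_G X\,{\rm d}\mu \;=\; \int_G XY\,{\rm d}\nu,
\]
where the first equality uses that $\mu[X\,|\,\G]$ is $\G$-measurable and $Z$ is the RN derivative of $\mu|_\G$ with respect to $\nu|_\G$, the second is the defining property of $\mu$-conditional expectation, and the third uses the RN derivative on $\F$. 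Since $Z\,\mu[X\,|\,\G]$ is $\G$-measurable, this yields the claim in one pass. Either route must address the same small technicality you flag implicitly: $\mu[X\,|\,\G]$ is only $\mu$-a.s.\ unique, and since $\nu$ need not be absolutely continuous with respect to $\mu$, different versions can disagree on a $\G$-measurable set $N$ of $\nu$-positive but $\mu$-null measure; on such $N$ one has $\int_N Z\,{\rm d}\nu=\mu(N)=0$, so $Z=0$ $\nu$-a.s.\ there, and the product $Z\,\mu[X\,|\,\G]$ is unambiguous $\nu$-a.s. Your proof is therefore complete and correct, just marginally longer than necessary.
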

Thus taking $\F=\F^{(k)}_t$, $\G=\F_t$, $\mu=\Q^{(k),\bm \theta}_{\bo c,t,r}$, $\nu=\p^{(k)}_r$ and $X$ a non-negative $\F^{(k)}_t$-measurable r.v., we get
\[
Y:=\frac{g_{\bo c,t}e^{-\bm \theta\cdot \bo Z_{t}}}{\E^{(k)}_r\left[\bo Z^{\floor{\bo D_{\bo c}}}_te^{-\bm \theta\cdot \bo Z_{t}}\right]}\qquad \mbox{and}\qquad Z:=\frac{\bo Z^{\floor{\bo D_{\bo c}}}_te^{-\bm \theta\cdot \bo Z_{t}}}{\E^{(k)}_r\left[\bo Z^{\floor{\bo D_{\bo c}}}_te^{-\bm \theta\cdot \bo Z_{t}}\right]},
\]implying that
\[
	\frac{\bo Z^{\floor{\bo D_{\bo c}}}_te^{-\bm \theta\cdot \bo Z_{t}}}{\E^{(k)}_r\left[\bo Z^{\floor{\bo D_{\bo c}}}_te^{-\bm \theta\cdot \bo Z_{t}}\right]}\Q^{(k),\bm{\theta}}_{\bo c,t,r}\Big[ X \Big|\ \F_t\Big]=\E^{(k)}_r\left[\left. X \frac{g_{\bo c,t}e^{-\bm \theta\cdot \bo Z_{t}}}{\E^{(k)}_r\left[\bo Z^{\floor{\bo D_{\bo c}}}_te^{-\bm \theta\cdot \bo Z_{t}}\right]}\right|\ \F_t\right].
\]
Considering $X=\mathbf{1}_{\{{\bm \varsigma_t}=\bo v\}}$, for $\bo v \in \mathcal{N}^{(\bo c)}_t$, we get
\[
\begin{split}
	\frac{\bo Z^{\floor{\bo D_{\bo c}}}_te^{-\bm \theta\cdot \bo Z_{t}}}{\E^{(k)}_r\left[\bo Z^{\floor{\bo D_{\bo c}}}_te^{-\bm \theta\cdot \bo Z_{t}}\right]}\Q^{(k),\bm \theta}_{\bo c, t,r}\paren{\left.\bm \varsigma_t=\bo v \right|\ \F_t}&=\E^{(k)}_r\left[\left. \mathbf{1}_{\{\bm \varsigma_t=\bo v\}}\frac{g_{\bo c,t}e^{-\bm \theta\cdot \bo Z_{t}}}{\E^{(k)}_r\left[\bo Z^{\floor{\bo D_{\bo c}}}_te^{-\bm \theta\cdot \bo Z_{t}}\right]}\right|\ \F_t\right]\\
	&=\frac{e^{-\bm \theta\cdot \bo Z_{t}}}{\E^{(k)}_r\left[\bo Z^{\floor{\bo D_{\bo c}}}_te^{-\bm \theta\cdot \bo Z_{t}}\right]}\E^{(k)}_r\left[\left. \mathbf{1}_{\{\bm \varsigma_t=\bo v \}} g_{\bo c,t}\right|\ \F_t\right].
\end{split}
\]
Recalling the definition of $g_{\bo c,t}$, we deduce
\begin{equation}\label{eqnQIsUniformGivenF0S}
\begin{split}
	\Q^{(k),\bm \theta}_{\bo c, t,r}\paren{\left.\bm \varsigma_t=\bo v \right|\ \F_t}
	&=\frac{1}{\bo Z^{\floor{\bo D_{\bo c}}}_t}\E^{(k)}_r\left[\left. \mathbf{1}_{\{\bm \varsigma_t=\bo v \}} g_{\bo c,t}\right|\ \F_t\right]\\
	&= \frac{1}{\bo Z^{\floor{\bo D_{\bo c}}}_t}\prod_{h\in [k]}\prod_{(w,c_w)\in {\rm spine}(v^{(h)})}\frac{\bo{L}_w\cdot \bm \xi}{\xi_{c_w}}\p^{(k)}_r\paren{\left.   \varsigma_t^{(h)}= v^{(h)}  \right|\ \F_t}\\
	&= \frac{1}{\bo Z^{\floor{\bo D_{\bo c}}}_t}.
\end{split}
\end{equation}
Thus, conditionally on $\mathcal{F}_t$, the $k$ spines  under  $\Q^{(k),\bm \theta}_{\bo c, t,r}$, are a uniform choice without replacement from all the alive particles at time $t$, having degree type $\bo D_{\bo c}$. 
This also provides a more explicit description of $\Q^{(k),\bm \theta}_{\bo c, t,r}$:
\begin{equation}\label{defMeasureQkTrGivingTwoStepsS1}
\begin{split}
	\left.\frac{{\rm d}\Q^{(k),\bm \theta}_{\bo c, t,r}}{{\rm d}\p^{(k)}_r}\right|_{\F^{(k)}_t}&=\frac{\bo Z^{\floor{\bo D_{\bo c}}}_te^{-\bm \theta\cdot \bo Z_{t}}}{\E^{(k)}_r\left[\bo Z^{\floor{\bo D_{\bo c}}}_te^{-\bm \theta\cdot \bo Z_{t}}\right]}\frac{1}{\bo Z^{\floor{\bo D_{\bo c}}}_t}g_{\bo c,t}.\\
\end{split}\end{equation}
In other words, given $\mathcal{F}_t$, the measure first applies a $\mathbf{D}_{\mathbf{c}}$-size bias and a $\bm \theta$ exponential tilt, and then selects $k$ individuals uniformly without replacement with types $(c_1,\ldots,c_k)$.

We now define the uniforme measure  $\p^{(k)}_{\bo c,t,r}$ as follows,
\begin{equation}\label{probuni2S}
\left.\frac{{\rm d}\p^{(k)}_{\bo c,t,r}}{{\rm d}\p^{(k)}_r}\right|_{\F^{(k)}_t}:=
\frac{g_{\bo c,t}}{\p_r\paren{\bo Z_{t}\geq \bo D_{\bo c}}\bo Z^{\floor{\bo D_{\bo c}}}_{t}},
\end{equation}
where the event $\{\bo Z_{t}\geq \bo D_{\bo c}\}=\{Z^{(m)}_{t}\geq D_{\bo c,m}, \textrm{ for all } m\in [d]\}.$

Given the previous definition, we would like to show that such a measure is indeed uniform on the given set. 
To that end, given $\F_t$, consider $\bo v\in \mathcal{N}^{(\bo c)}_t$. 
Then, just as in \eqref{eqnQIsUniformGivenF0S}, we have
\begin{equation}\label{eqnPlsUniformGivenF0S}
\begin{split}
	\p^{(k)}_{\bo c,t,r}\paren{\left.\bm \varsigma_t=\bo v \right|\ \F_t}
	&=\frac{1}{\bo Z^{\floor{\bo D_{\bo c}}}_{t}}\E^{(k)}_r\left[\left. \mathbf{1}_{\{\bm \varsigma_t=\bo v \}} g_{\bo c,t}\right|\ \F_t\right]\\
	&= \frac{1}{\bo Z^{\floor{\bo D_{\bo c}}}_{t}}\prod_{h\in [k]}\prod_{(w,c_w)\in {\rm spine}(v^{(h)})}\frac{\bo{L}_w\cdot \bm \xi}{\xi_{c_w}}\p^{(k)}_r\paren{\left.   \varsigma_t^{(h)}= v^{(h)}  \right|\ \F_t}= \frac{1}{\bo Z^{\floor{\bo D_{\bo c}}}_{t}}.
\end{split}
\end{equation}That is, the measure $\p^{(k)}_{\bo c,t,r}$ is sampling uniformly from $\mathcal{N}^{(\bo c)}_t$.

Finally, we establish an important relationship between $\mathbb{Q}^{(k),\bm{\theta}}_{\bo c,t,r}$ and  $\p^{(k)}_{\bo c,t,r}$, which will be used latter (see Lemma \ref{lemmaFromTheMeasureQToTheMeasureEunifS}). Specifically,
\begin{equation}\label{eqnRelationBetweenQSAndPSUnif}
\left.\frac{{\rm d}\Q^{(k),\bm \theta}_{\bo c, t,r}}{{\rm d}\p^{(k)}_{\bo c,t,r}}\right|_{\F^{(k)}_t}=\frac{\bo Z^{\floor{\bo D_{\bo c}}}_te^{-\bm \theta\cdot \bo Z_{t}}}{\E_r\left[\bo Z^{\floor{\bo D_{\bo c}}}_te^{-\bm \theta\cdot \bo Z_{t}}\Big| \bo Z_t\geq \bo D_{\bo c}\right]},
\end{equation}which is the analogue of  \eqref{defMeasureQkTrandPUnif}. This identity follows directly from  \eqref{defMeasureQkTrGivingTwoStepsS1} and \eqref{probuni2S}. The following lemma is the analogue of  \cite{AHP-p1}, Lemma 2.

\begin{lemma}\label{lemmaFromTheMeasureQToTheMeasureEunifS}
Suppose $A\in \mathcal{F}^{(k)}_t$. Then
	\[
\p^{(k)}_{\bo c,t,r}(A)=\E_r\Big[\left.\bo Z^{\floor{\bo D_{\bo c}}}_{t}e^{-\bm \theta\cdot \bo Z_{t}}\right|\ \bo Z_t\geq \bo D_{\bo c}\Big]\Q^{(k),\bm \theta}_{\bo c, t,r}\left[\frac{\mathbf{1}_{A}}{\bo Z^{\floor{\bo D_{\bo c}}}_{t}e^{-\bm \theta\cdot \bo Z_{t}}}\right].
		\]
\end{lemma}
\begin{proof}
The proof follows directly from \eqref{eqnRelationBetweenQSAndPSUnif}, by repeating the same steps as in the proof of \cite{AHP-p1}, Lemma 2. 
\end{proof}

\subsection{Sampling according to type dependent weights}\label{subsecunsamweights} Similarly to the previous case, we now introduce the auxiliary measure associated with sampling according to type dependent weights.
Fix a vector of strictly positive weights over types $\bo w:=(w(1),\ldots,w(d))\in \re^d_+$. For $\bo v=(v^{(1)},\ldots, v^{(k)})\in \mathcal{N}_t^{(k)}$, we define its type-degree vector $\bo D_{\bo v}:=(D_{\bo v, 1},\ldots, D_{\bo v, d})$, where 
\[
D_{\bo v, m}={\rm card}\{h\in [k]:c(v^{(h)})=m\}.
\]
The Radon-Nikodym derivative corresponding to this sampling scheme is given by \begin{equation}\label{cebolla3}
 \zeta^{\bm{\theta}}_{\bo w,t}:=\frac{g_{k,t}e^{-\bm \theta\cdot \bo Z_{t}}\prod_{h=1}^kw(c(\varsigma^{(h)}_t))}{\E_r\left[e^{-\bm \theta\cdot \bo Z_{t}}\sum_{\bo{v}\in \mathcal{N}_{t}^{(k)}}\prod_{h=1}^k w(c(v^{(h)}))\right]}.
\end{equation}
At first glance, the preceding change of measure may appear unnatural due to the combinatorial structure present in both the numerator and the denominator. Nevertheless, as shown in the estimate \eqref{eqnApproximationToMultinomialDenominatorQS}, when the population is sufficiently large, both terms asymptotically resemble those of a multinomial factor.

On the other hand, we observe
\[
\bo w^{\bo D_{\bo v}}=\prod_{m=1}^dw(m)^{D_{\bo v,m}}=\prod_{h=1}^kw(c(v^{(h)})). 
\]
Moreover, for $t>0$ and $r\in [d]$,
\[
\begin{split}
 \E^{(k)}_r\left[\left. g_{k,t}{\bo w}^{{\bo D}_{\bm \varsigma_t}}\right|\ \F_t\right]&= \sum_{\bo{v}\in \mathcal{N}_{t}^{(k)}}\bo w^{\bo D_{\bo v}}\prod_{h\in [k]}\prod_{(w,c_w)\in \mathrm{spine}(v^{(h)})}\frac{ \bo{L}_{w}\cdot \bm{\xi}}{\xi_{c_{w}}}\p^{(k)}_r\paren{\left.\bo{\varsigma}_t=\bo{v} \right|\ \F_t}\\
&=\sum_{\bo{v}\in \mathcal{N}_{t}^{(k)}}\bo w^{\bo D_{\bo v}}.
\end{split}
\] 
Therefore,
\begin{equation}\label{tomate3}
 \E^{(k)}_r\left[\left.  g_{k,t}e^{-\bm \theta\cdot \bo Z_{t}}{\bo w}^{{\bo D}_{\bm \varsigma_t}} \right|\ \F_t\right]
=e^{-\bm \theta\cdot \bo Z_{t}}\sum_{\bo{v}\in \mathcal{N}_{t}^{(k)}}\bo w^{\bo D_{\bo v}}.
\end{equation}


We now provide an exact combinatorial expression for the sum of the weights over all ordered $k$-tuples. 
Let $\binom{k}{\mathbf{D}}=\frac{k!}{D_1!\dots D_d!}$ denote the multinomial coefficient. 
We partition the set of all valid ordered tuples $\mathcal{N}_t^{(k)}$ into equivalence classes based on their type-degree vectors $\bo D = (D_1, \dots, D_d)$. For a fixed degree vector $\bo D$ such that $\sum_{m=1}^d D_m = k$, consider the subset of tuples $\mathcal{S}_{\bo D} = \{ \bo v \in \mathcal{N}_t^{(k)} : \bo D_{\bo v} = \bo D \}$. 
Then one can prove that
\[
\textrm{card}(\mathcal{S}_{\bo D}) = {k \choose \bo D} \bo Z_t^{\floor{\bo D}}.
\]
Indeed, the number of vectors $\bo v\in \mathcal{N}^{(k)}_t$ having exactly $D_{m}$ components of type $m$ is $(Z^{(m)}_t)^{\floor{D_{m}}}$, the number of ways to choose $D_{m}$ distinct individuals from $Z^{(m)}_t$ without replacement. 
Also, the number of ways to partition the $k$ sequence slots into $d$ groups of exact sizes $D_1, \dots, D_d$ is precisely the multinomial coefficient.
Hence
\begin{equation}\label{eqnMultinomialDenominatorQS0}
\sum_{\bo{v}\in \mathcal{N}_{t}^{(k)}} \bo w^{\bo D_{\bo v}} = \sum_{\substack{\bo D\in \{0,1,\ldots,k\}^d:\\ \sum_{m=1}^d D_m=k}}{k \choose \bo D}\bo Z^{\floor{\bo D}}_t\bo w^{\bo D}. 
\end{equation}

Later, we will be  interested in the behaviour of the population conditioned on survival. Under this conditioning, the population ultimately grows without bound,  and for large $n$ with $k$ fixed, we may informally use the approximation $n^{\floor{k}}\sim n^k$. Applying this heuristic in our setting yields
\begin{equation}\label{eqnApproximationToMultinomialDenominatorQS}
\sum_{\substack{\bo D\in \{0,1,\ldots,k\}^d:\\ \sum_{m=1}^dD_{m}=k}}{k \choose \bo D}\bo Z^{\floor{\bo D}}_t\bo w^{\bo D}\sim \sum_{\substack{\bo D\in \{0,1,\ldots,k\}^d:\\ \sum_{m=1}^dD_{m}=k}}{k \choose \bo D}\bo Z^{\bo D}_t\bo w^{\bo D}=\paren{\sum_{m=1}^dZ^{(m)}_tw(m)}^k=\big(\bo Z_t\cdot \bo w\big)^k,
\end{equation} where the first equality follows from  the multinomial theorem. This approximation accounts for the combinatorial factor appearing in  \eqref{cebolla3}. 

Thus from the above discussions we can construct, for $t>0$,  the  following probability measure 
\begin{equation}\label{defMeasureQkTrGivenAllInfoS3}
	\left.\frac{{\rm d}\Q^{(k),\bm{\theta}}_{\bo w, t,r}}{{\rm d}\p^{(k)}_r}\right|_{\F^{(k)}_t}:=\zeta^{\bm{\theta}}_{\bo w,t},
\end{equation}
which implies that
\begin{equation}\label{defMeasureQkTrGivenTopologicalInfoS3}
\left.\frac{{\rm d}\Q^{(k),\bm{\theta}}_{\bo w, t,r}}{{\rm d}\p^{(k)}_r}\right|_{\F_t}=\frac{e^{-\bm \theta\cdot \bo Z_{t}}\sum_{\bo{v}\in \mathcal{N}_{t}^{(k)}}\bo w^{\bo D_{\bo v}}}{\E_r\left[e^{-\bm \theta\cdot \bo Z_{t}}\sum_{\bo{v}\in \mathcal{N}_{t}^{(k)}}\bo w^{\bo D_{\bo v}}\right]}.
\end{equation}


Similarly as in \eqref{eqnComparingQSAndQk}, we can also compare the probability measures $\Q^{(k),\bm{\theta}}_{\bo w, t,r}$ and $\Q^{(k),\bm{\theta}}_{t,r}$. Indeed, we have
\begin{equation}\label{eqnComparingQSAndQk3}
\left.\frac{{\rm d}\Q^{(k),\bm{\theta}}_{\bo w, t,r}}{{\rm d}\Q^{(k),\bm{\theta}}_{t,r}}\right|_{\F^{(k)}_t}=\frac{\bo w^{\bo D_{\bm \varsigma_t}}}{\Q^{(k),\bm \theta }_{t,r}\left[\bo w^{\bo D_{\bm \varsigma_t}}\right]}.
\end{equation}
The denominator above can be computed as follows. By \eqref{defMeasureQkTrGivenAllInfo} and \eqref{tomate3}, we have 
\[
\begin{split}
\Q^{(k),\bm \theta }_{t,r}\left[\bo w^{\bo D_{\bm \varsigma_t}}\right]& =\frac{\E^{(k)}_r\left[e^{-\bm \theta\cdot \bo Z_{t}}g_{k,t}\bo w^{\bo D_{\bm \varsigma_t}}\right]}{\E_r\left[N^{\floor{k}}_te^{-\bm \theta\cdot \bo Z_{t}}\right]}=\frac{\E^{(k)}_r\left[e^{-\bm \theta\cdot \bo Z_{t}}\sum_{\bo{v}\in \mathcal{N}_{t}^{(k)}}\bo w^{\bo D_{\bo v}}
\right]}{\E_r\left[N^{\floor{k}}_te^{-\bm \theta\cdot \bo Z_{t}}\right]},
\end{split}
\]
then 
\[
\left.\frac{{\rm d}\Q^{(k),\bm{\theta}}_{\bo w, t,r}}{{\rm d}\Q^{(k),\bm{\theta}}_{t,r}}\right|_{\F^{(k)}_t}=\frac{\bo w^{\bo D_{\bm \varsigma_t}}\E_r\left[N^{\floor{k}}_te^{-\bm \theta\cdot \bo Z_{t}}\right]}{\E^{(k)}_r\left[e^{-\bm \theta\cdot \bo Z_{t}}\sum_{\bo{v}\in \mathcal{N}_{t}^{(k)}}\bo w^{\bo D_{\bo v}}
\right]}.
\]
As one  might expect, there is also a direct connection between $\Q^{(k),\bm{\theta}}_{\bo w, t,r}$ and $\Q^{(k),\bm{\theta}}_{\bo c, t,r}$. 
For  $\bo c=(c_{1},\ldots, c_{k})\in [d]^k$,  we write $ 
\{c(\bm \varsigma_t)=\bo c\}=\{c(\varsigma^{(h)}_t)=c_{h},\forall\ h\in [k] \},$
 and recall that $\bo D_{\bo c}=(D_{\bo c, 1}, \ldots, D_{\bo c, d})$ (see the beginning of Section \ref{subsecunsamconf}). Conditionally under $\F^{(k)}_t$, summing over all the possible colour configurations of   marks  at time $t$, the right-hand side of \eqref{eqnComparingQSAndQk3} can be rewritten as 
\begin{align*}
\frac{\bo w^{\bo D_{\bm \varsigma_t}}}{\Q^{(k),\bm \theta }_{t,r}\left[\bo w^{\bo D_{\bm \varsigma_t}}\right]}&= \sum_{ \bo c\in \bo [d]^k}\frac{{k \choose \bo D_{\bo c }}\prod_{h=1}^{k}w(c^{(h)})\mathbf{1}_{\{c(\bm \varsigma_t)=\bo c\}}}{\Q^{(k),\bm \theta }_{t,r}\left[\bo w^{\bo D_{\bm \varsigma_t}}\right]}\\
& = \sum_{ \bo c\in \bo [d]^k}\frac{\Q^{(k),\bm \theta }_{t,r}\paren{c(\bm \varsigma_t)=\bo c}\prod_{h=1}^{k}w(c^{(h)})}{\Q^{(k),\bm \theta }_{t,r}\left[\bo w^{\bo D_{\bm \varsigma_t}}\right]}\frac{\mathbf{1}_{\{c(\bm \varsigma_t)=\bo c\}}}{\Q^{(k),\bm \theta }_{t,r}\paren{c(\bm \varsigma_t)=\bo c}}.
\end{align*}
From this identity and \eqref{eqnComparingQSAndQk}, it follows that $\Q^{(k),\bm{\theta}}_{\bo w, t,r}$ is a finite mixture of  the probability measures $\big(\Q^{(k),\bm{\theta}}_{\bo c, t,r},\bo c\in [d]^k\big)$. Indeed
\begin{equation}\label{eqnQWIsAMixtureOfQS}
\Q^{(k),\bm{\theta}}_{\bo w, t,r}(A)=\sum_{ \bo c\in \bo [d]^k} q^{(\bo w),\bo c,\bm \theta}_{t,r}\Q^{(k),\bm{\theta}}_{\bo c, t,r}(A)\qquad \forall\ A\in \F^{(k)}_t,
\end{equation}
where
\begin{equation}\label{defqbowBoc}
q^{(\bo w),\bo c,\bm \theta}_{t,r}:=\frac{\Q^{(k),\bm \theta }_{t,r}\paren{c(\bm \varsigma_t)=\bo c}\prod_{h=1}^{k}w(c^{(h)})}{\Q^{(k),\bm \theta }_{t,r}\left[\bo w^{\bo D_{\bm \varsigma_t}}\right]}.
\end{equation}
Clearly $\sum_{\bo c\in [d]^k}q^{(\bo w),\bo c,\bm \theta}_{t,r}=1.$
In fact from \eqref{eqnComparingQSAndQk3}, we have that
$q^{(\bo w),\bo c,\bm \theta}_{t,r}=\Q^{(k),\bm{\theta}}_{\bo w, t,r}\paren{c(\bm \varsigma_t)=\bo c}$
and hence it follows from \eqref{eqnQWIsAMixtureOfQS} that, for all $A\in \F^{(k)}_t$,
\[
\Q^{(k),\bm{\theta}}_{\bo w, t,r}\paren{ A \,|\,c(\bm \varsigma_t)=\bo c}=\Q^{(k),\bm{\theta}}_{\bo c, t,r}(A).
\]

Now we give an interpretation of the sampling under $\Q^{(k),\bm{\theta}}_{\bo w, t,r}$. 

\begin{lemma}\label{LemmaChoosingAVertexUndrQw}
We have 
\[
\Q^{(k),\bm{\theta}}_{\bo w, t,r}\paren{\left.\bm \varsigma_t=\bo v \right|\ \F_t}=\frac{\bo w^{\bo D_{\bo v}}}{\sum_{\bo v\in \mathcal{N}^{(k)}_t}\bo w^{\bo D_{\bo v}}}.
\]
\end{lemma}
\begin{proof}
Following the computations that lead us to \eqref{eqnQIsUniformGivenF0S} and using \eqref{eqnComparingQSAndQk3}, we deduce
\begin{align*}
	\Q^{(k),\bm{\theta}}_{\bo w, t,r}\paren{\bm \varsigma_t=\bo v \big|\ \F_t}
	&=\frac{1}{\Q^{(k),\bm \theta}_{t,r}\left[\bo w^{\bo D_{\bm \varsigma_t}}\big| \F_t\right]}\Q^{(k),\bm \theta}_{t,r}\left[\left.\mathbf{1}_{\{\bm \varsigma_t=\bo v\}} \bo w^{\bo D_{\bm \varsigma_t}}\right|\ \F_t\right].
\end{align*}To compute the denominator, we sum over all possible values of $\bm \varsigma_t$, which gives us
\begin{align*}
\Q^{(k),\bm \theta}_{t,r}\left[\bo w^{\bo D_{\bm \varsigma_t}}\Big| \F_t\right]& = \sum_{\bo v\in \mathcal{N}^{(k)}_t}\bo w^{\bo D_{\bo v}}\Q^{(k),\bm \theta}_{t,r}\paren{\bo \varsigma_t=\bo v\Big| \F_t}\\
& =\frac{1}{N^{(k)}_t}\sum_{\bo v\in \mathcal{N}^{(k)}_t}\bo w^{\bo D_{\bo v}}.
\end{align*}
A similar computation for the numerator implies the result. 
\end{proof}

We now provide an interpretation of  the previous lemma.  It states that 
under $\Q^{(k),\bm{\theta}}_{\bo w, t,r}$ and conditional on $\F_t$, the probability that the spines take a specific value $\bo v$ is proportional to the product of the weights associated with  the colours appearing in $\bo v$.  In other words, types with larger weights are more likely to be selected. In contrast, under $\Q^{(k),\bm \theta}_{t,r}$ and conditional on $\F_t$, the choice of $\bo v$ is uniform and therefore independent of the colours in $\bo v$.

Another important remark is that, by using the approximation  in \eqref{eqnApproximationToMultinomialDenominatorQS} with Lemma \ref{LemmaChoosingAVertexUndrQw}, we may write, for sufficiently large $t$,
\begin{equation}\label{eqnInterpretationQw}
\Q^{(k),\bm{\theta}}_{\bo w, t,r}\paren{\bm \varsigma_t=\bo v \big|\ \F_t}\sim \frac{\bo w^{\bo D_{\bo v}}}{\big(\bo Z_t\cdot \bo w\big)^k}=\frac{1}{\bo Z_t^{\bo D_{\bo v}}}\prod_{m=1}^{d}\paren{\frac{Z^{(m)}_t w(m)}{\bo Z_t\cdot \bo w}}^{D_{\bo v,m}}.
\end{equation}
This shows  that,  for any  vertex $\bo v$ with fixed degree  vector $\bo D_{\bo v}$, the selection probability is approximately multinomial  with parameters $k$ and 
$(\frac{Z^{(1)}_tw(1)}{\bo Z_t\cdot \bo w},\ldots, \frac{Z^{(d)}_tw(d)}{\bo Z_t\cdot \bo w}).$

Similarly as the previous two sampling schemes, we can define a new probability measure  $\p^{(k)}_{\bo w,t,r}$ as follows,
\begin{equation}\label{probuni2S3}
\left.\frac{{\rm d}\p^{(k)}_{\bo w,t,r}}{{\rm d}\p^{(k)}_r}\right|_{\F^{(k)}_t}:=\frac{1}{\p_r\paren{N_t\geq k}}\frac{g_{k,t}\bo w^{\bo D_{\bm \varsigma_t}}}{\sum_{\bo v\in \mathcal{N}^{(k)}_t}\bo w^{\bo D_{\bo v}}}. 
\end{equation}
That is to say the measure $\p^{(k)}_{\bo w,t,r}$  selects individuals according to the weight vector $\bo w$.
Indeed, similarly as in \eqref{eqnQIsUniformGivenF0S}, we have for $\bo v\in \mathcal{N}^{(k)}_t$ given $\F_t$, that 
\begin{equation}\label{eqnQIsUniformGivenF0S3}
\begin{split}
	\p^{(k)}_{\bo w,t,r}\paren{\left.\bm \varsigma_t=\bo v \right|\ \F_t}
	&=\frac{1}{\sum_{\bo v\in \mathcal{N}^{(k)}_t}\bo w^{\bo D_{\bo v}}}\E^{(k)}_r\left[\left. \mathbf{1}_{\{\bm \varsigma_t=\bo v \}} g_{k,t}\bo w^{\bo D_{\bm \varsigma_t}}\right|\ \F_t\right]\\
	&= \frac{\bo w^{\bo D_{\bo v}}}{\sum_{\bo v\in \mathcal{N}^{(k)}_t}\bo w^{\bo D_{\bo v}}}.
\end{split}
\end{equation}
From \eqref{defMeasureQkTrGivingTwoSteps}, \eqref{probuni1} and \eqref{probuni}, we observe the following identity  under the event $\{N_t\geq k\}$,
\begin{equation}\label{eqnRelationBetweenQSAndPSUnif3}
\begin{split}
	\left.\frac{{\rm d}\Q^{(k),\bm \theta}_{\bo w, t,r}}{{\rm d}\p^{(k)}_{\bo w,t,r}}\right|_{\F^{(k)}_t}& =\left.\frac{{\rm d}\Q^{(k),\bm \theta}_{\bo w, t,r}}{{\rm d}\p^{(k)}_r}\right|_{\F^{(k)}_t}\times\left.\frac{{\rm d}\p^{(k)}_r}{{\rm d}\p^{(k)}_{\bo w,t,r}}\right|_{\F^{(k)}_t}\\
& =\frac{g_{k,t}e^{-\bm \theta\cdot \bo Z_{t}}\bo w^{\bo D_{\bm \varsigma_t}}}{\E_r\left[e^{-\bm \theta\cdot \bo Z_{t}}\sum_{\bo{v}\in \mathcal{N}_{t}^{(k)}}\bo w^{\bo D_{\bo v}}\right]}\p_r\paren{N_t\geq k}\frac{\sum_{\bo v\in \mathcal{N}^{(k)}_t}\bo w^{\bo D_{\bo v}}}{g_{k,t}\bo w^{\bo D_{\bm \varsigma_t}}}\\
&=\frac{e^{-\bm \theta\cdot \bo Z_{t}}\sum_{\bo v\in \mathcal{N}^{(k)}_t}\bo w^{\bo D_{\bo v}}}{\E^{(k)}_{\bo w,t,r}\left[e^{-\bm \theta\cdot \bo Z_{t}}\sum_{\bo{v}\in \mathcal{N}_{t}^{(k)}}\bo w^{\bo D_{\bo v}}\right]}.
\end{split}
\end{equation}

\begin{lemma}\label{lemmaFromTheMeasureQToTheMeasureEunifS3}
Suppose $A\in \mathcal{F}^{(k)}_t$. Then
	\[
\p^{(k)}_{\bo w,t,r}\big( A \big)=\E_r\left[e^{-\bm \theta\cdot \bo Z_{t}}\sum_{\bo{v}\in \mathcal{N}_{t}^{(k)}}\bo w^{\bo D_{\bo v}}\Bigg|  N_t\geq k\right]\Q^{(k),\bm \theta}_{\bo w, t,r}\left[\frac{\mathbf{1}_{A}}{e^{-\bm \theta\cdot \bo Z_{t}}\sum_{\bo v\in \mathcal{N}^{(k)}_t}\bo w^{\bo D_{\bo v}}}\right].
\]
\end{lemma}
\begin{proof}
The proof follows easily from \eqref{eqnRelationBetweenQSAndPSUnif3}. 
Indeed, since $\bo Z_t$ is finite $\Q^{(k),\bm \theta}_{\bo w, t,r}$ and  $\p^{(k)}_{\bo w,t,r}$-a.s., then for any $A\in \mathcal{F}^{(k)}_t$, we have
\begin{equation}
\left.\frac{{\rm d}\p^{(k)}_{\bo w,t,r}}{{\rm d}\Q^{(k),\bm \theta}_{\bo w, t,r}}\right|_{\F^{(k)}_t}=\frac{1}{\p_r\paren{N_t\geq k}}\frac{\E_r\left[e^{-\bm \theta\cdot \bo Z_{t}}\sum_{\bo{v}\in \mathcal{N}_{t}^{(k)}}\bo w^{\bo D_{\bo v}}\right]}{e^{-\bm \theta\cdot \bo Z_{t}}\sum_{\bo v\in \mathcal{N}^{(k)}_t}\bo w^{\bo D_{\bo v}}}.
\end{equation}This proves the claim. 
\end{proof}

Before proceeding to the proof of the main results, we emphasize that each sampling scheme induces an auxiliary measure $\mathbb{Q}^{(k),\bm{\theta}}_{\cdot, t,r}$, under which the genealogical tree evolves as a branching process. Moreover, since the measures $\mathbb{Q}^{(k),\bm{\theta}}_{\bo c, t,r}$ and $\mathbb{Q}^{(k),\bm{\theta}}_{{\bo w},t,r}$ are absolutely continuous with respect to $\mathbb{Q}^{(k),\bm{\theta}}_{t,r}$, it is sufficient to carry out the analysis under $\mathbb{Q}^{(k),\bm{\theta}}_{t,r}$, for instance via the forward construction of the population process $\mathcal{N}$. The corresponding results for the other sampling schemes then follow by applying the appropriate Radon-Nikodym derivatives.

\section{Proof of the main results}\label{proofmain} 
\subsection{Proof of Theorem \ref{propofsplitting2Intro}}

The proof of Theorem \ref{propofsplitting2Intro} proceeds in several steps. We begin with two auxiliary results that help  characterise the event $\Delta_T(n)$ under $\Q^{(k),\bm \theta}_{\bo c,T,r}$.

To this end, we first recall the notation from Theorem \ref{propofsplitting1} and introduce some additional definitions. 
For each $h\in [n]$, let $\widetilde t_h$ denote the first time at which mark $h$ separates from all other marks. 
By construction, $\widetilde t_h = t_{v(h',m,q)}$ for some $h'\in [n]$, $m\in [d]$, and $q\in [g_{h',m}]$. 
Next, let $\widetilde m_h$ denote the type (or color) of the individual born at time $\widetilde t_h$ that carries mark $h$. 
This information is equivalently encoded in the sequence of coloured partitions $(\bo P_h; h\in [n])$.

\begin{lemma}\label{coroJointLawAndGivenSampleAreIndependent0}
For $k\geq 2$, $n\in [k-1]$, and $0=t_0<t_1<\cdots < t_n<T$, we have
\begin{equation}\label{eqnJointSplittingEventWithPartitionChildAndTypeV3}
\Q^{(k),\bm \theta }_{T,r}\Big(\Delta_T(n),c(\varsigma^{(h)}_{T})=c_h,\forall h\in [k]\Big)
=
\Q^{(k),\bm \theta }_{T,r}\Big(\Delta_T(n)\Big)
\prod_{h=1}^{k}
\Q^{(1),\bm \theta}_{T-\widetilde t_h,\widetilde m_h}\Big(c(\varsigma^{(h)}_{T-\widetilde t_h})=c_h\Big).
\end{equation}
\end{lemma}

\begin{proof}
This follows from the branching Markov property (see \cite{AHP-p1}, Lemma 3).  We only outline the main argument to o keep the length of the paper concise.

We apply the branching property at each spine splitting time. 
Observe that the event $\Delta_T(n)$ already encodes the full partition structure, and in particular determines the precise splitting time at which each mark separates from the others. 
Thus, conditional on $\Delta_T(n)$, the evolution after each splitting time is independent across marks. 
Arguing as in \cite{AHP-p1}, Propositions 2 and 3, we obtain the desired factorization.
\end{proof}

Next, we express the terms in the product on the right-hand side of \eqref{eqnJointSplittingEventWithPartitionChildAndTypeV3} in terms of the original law of $\bo Z$.

\begin{lemma}\label{lemmaLimitUnderQColorOfSingleSpineIsGiven0}
For any $t\in [0,T)$, $m,c_h\in [d]$, and $h\in [k]$, we have
\begin{equation}\label{eqnLimitUnderQColorOfSingleSpineIsGiven}
\Q^{(1),\bm \theta}_{T-t,m}\Big(c(\varsigma^{(h)}_{T-t})=c_h\Big)
=
\frac{\E_m[Z^{(c_h)}_{T-t}e^{-\bm \theta \cdot \bo Z_{T-t}}]}
{\E_m\left[N_{T-t}e^{-\bm \theta\cdot \bo Z_{T-t}}\right]}.
\end{equation}
\end{lemma}

\begin{proof}
From \eqref{defMeasureQkTrGivingTwoSteps}, we have
\[
\Q^{(1),\bm \theta}_{T-t,m}\Big(c(\varsigma^{(h)}_{T-t})=c_h\Big)
=
\frac{\E_m\left[g_{k,T-t}\mathbf{1}_{\{c(\varsigma^{(h)}_{T-t})=c_h\}}\right]}
{\E_m\left[N_{T-t}e^{-\bm \theta \cdot \bo Z_{T-t}}\right]}.
\]
Conditioning on $\mathcal{F}_{T-t}$ and arguing as in \eqref{tomate1}, we obtain
\[
\E_m\left[g_{k,T-t}\mathbf{1}_{\{c(\varsigma^{(h)}_{T-t})=c_h\}}\right]
=
\E_m\left[Z^{(c_h)}_{T-t}e^{-\bm \theta \cdot \bo Z_{T-t}}\right],
\]
which yields the claim.
\end{proof}

The following proposition is instrumental in the proof of Theorem \ref{propofsplitting2Intro}.

\begin{propo}\label{teoJointLawUnderQkbutnewver}
For any $k\geq 1$, $T\in \re_+$, $r\in [d]$, $\bm \theta\in \mathbb{R}^d_+$, and $\bo c=(c_1,\ldots, c_k)\in [d]^k$, we have
\[
\Q^{(k),\bm \theta}_{\bo c, T,r}\Big(\Delta_T(n)\Big)
=
\Q^{(k),\bm \theta }_{T,r}\Big(\Delta_T(n)\Big)
\frac{\E_{r}\left[N^{\floor{k}}_{T}e^{-\bm \theta\cdot \bo Z_{T}}\right]}
{\E_{r}[\bo Z^{\floor{\bo D_{\bo c}}}_{T}e^{-\bm \theta \cdot \bo Z_{T}}]}
\prod_{h=1}^{k}
\frac{\E_{\widetilde m_h}[Z^{(c_h)}_{T-\widetilde t_h}e^{-\bm \theta\cdot \bo Z_{T -\widetilde t_h}}]}
{\E_{\widetilde m_h}\left[N_{T -\widetilde t_h}e^{-\bm \theta\cdot \bo Z_{T -\widetilde t_h}}\right]}.
\]
\end{propo}

\begin{proof}
Combining \eqref{eqnComparingQSAndQk}, \eqref{eqnJointSplittingEventWithPartitionChildAndTypeV3}, and Lemmas 
\ref{coroJointLawAndGivenSampleAreIndependent0} and \ref{lemmaLimitUnderQColorOfSingleSpineIsGiven0}, we obtain
\[
\begin{split}
\Q^{(k),\bm \theta}_{\bo c, T,r}\Big(\Delta_T(n)\Big)
&=
\Q^{(k),\bm \theta }_{T,r}\Big(\Delta_T(n)\Big)
\frac{\E_{r}\left[N^{\floor{k}}_{T}e^{-\bm \theta\cdot \bo Z_{T}}\right]}
{\E_{r}[\bo Z^{\floor{\bo D_{\bo c}}}_{T}e^{-\bm \theta \cdot \bo Z_{T}}]} 
\prod_{h=1}^{k}
\frac{\E_{\widetilde m_h}[Z^{(c_h)}_{T-\widetilde t_h}e^{-\bm \theta \cdot \bo Z_{T -\widetilde t_h}}]}
{\E_{\widetilde m_h}\left[N_{T -\widetilde t_h}e^{-\bm \theta\cdot \bo Z_{T -\widetilde t_h}}\right]},
\end{split}
\]
as claimed.
\end{proof}

We now turn to the proof of Theorem \ref{propofsplitting2Intro}.

\begin{proof}[Proof of Theorem \ref{propofsplitting2Intro}]
We prove the result for an arbitrary event $A\in \mathcal{F}^{(k)}_T$; the statement then follows by taking $A=\Delta_T(n)$.

Starting from Lemma~\ref{lemmaFromTheMeasureQToTheMeasureEunifS}, we apply the Beta integral identity
\begin{equation}\label{gamma+beta}
\frac{\Gamma(k)}{x^{\floor{k}}}
=
\int_0^\infty (e^y-1)^{k-1}e^{-yx}\,{\rm d}y,
\end{equation}
componentwise to each factor $(Z^{(m)}_T)^{D_{\bo c,m}}$ in the denominator. 
This yields
\[
\begin{split}
\Q^{(k),\bm \theta}_{\bo c,T,r}\left[\frac{\mathbf{1}_{A}}{\bo Z^{\floor{\bo D_{\bo c}}}_{T}e^{-\bm \theta\cdot \bo Z_{T}}}\right]
&=
\left(\prod_{m\in S_{\bo c}}\frac{1}{(D_{\bo c,m}-1)!}\right)
\int_{\mathbb{R}^{|S_{\mathbf{c}}|}_+}(e^{\bm\phi}-\vec{1})^{\bo D_{\bo c}-\vec{1}} 
\Q^{(k),\bm \theta }_{\bo c, T,r}\left[\mathbf{1}_{A}e^{(\bm \theta-\bm \phi) \cdot \bo Z_{T}}\right]
\,{\rm d}\bm \phi.
\end{split}
\]

Using the change of measure \eqref{defMeasureQkTrGivenAllInfoS}, we observe that for any $\bm \phi\in \mathbb{R}^d_+$,
\[
\Q^{(k),\bm \theta}_{\bo c,T,r}\left[\mathbf{1}_{A}e^{(\bm \theta-\bm \phi)\cdot \bo Z_{T}}\right]
=
\Q^{(k),\bm \phi}_{\bo c, T,r}(A)
\frac{\E_{r}\left[\bo Z^{\floor{\bo D_{\bo c}}}_Te^{-\bm \phi\cdot \bo Z_T}\right]}
{\E_{r}\left[\bo Z^{\floor{\bo D_{\bo c}}}_Te^{-\bm \theta\cdot \bo Z_T}\right]}.
\]

Substituting this identity and combining with Lemma~\ref{lemmaFromTheMeasureQToTheMeasureEunifS} completes the proof.
\end{proof}

\subsection{Proof of Theorem \ref{propofsplitting3Intro}}

We begin with the following result, which plays a key role in the proof of Theorem \ref{propofsplitting3Intro} and characterizes the event $\Delta_T(n)$ under $\Q^{(k),\bm \theta}_{\bo w,T,r}$. Its proof is an immediate consequence of \eqref{eqnQWIsAMixtureOfQS} and Proposition \ref{teoJointLawUnderQkbutnewver}.

\begin{coro}\label{teoLimitQWEqualsLimitQk}
For any $k\geq 1$, $T\in \mathbb{R}_+$, $r\in [d]$, $\bm \theta\in \mathbb{R}^d_+$, and weights $\bo w=(w(1),\ldots,w(d))\in \mathbb{R}^d_+$, we have
\[
\begin{split}
\Q^{(k),\bm \theta}_{\bo w,T,r}\Big(\Delta_T(n)\Big)
&= \sum_{\bo c\in [d]^k} q^{(\bo w),\bo c,\bm \theta}_{T,r}
\,\Q^{(k),\bm \theta}_{T,r}\Big(\Delta_T(n)\Big) \\
&\qquad\qquad \times
\frac{\E_{r}\left[N^{\lfloor k \rfloor}_{T}e^{-\bm \theta\cdot \bo Z_{T}}\right]}
{\E_{r}\left[\bo Z^{\lfloor \bo D_{\bo c}\rfloor}_{T}e^{-\bm \theta \cdot \bo Z_{T}}\right]}
\prod_{h=1}^{k}
\frac{\E_{\widetilde m_h}\left[Z^{(c_h)}_{T-\widetilde t_h}e^{-\bm \theta \cdot \bo Z_{T -\widetilde t_h}}\right]}
{\E_{\widetilde m_h}\left[N_{T -\widetilde t_h}e^{-\bm \theta\cdot \bo Z_{T -\widetilde t_h}}\right]}.
\end{split}
\]
\end{coro}
We now turn to the proof of Theorem \ref{propofsplitting3Intro}.
\begin{proof}[Proof of Theorem \ref{propofsplitting3Intro}]
Arguing as in the proof of Theorem \ref{propofsplitting2Intro}, it suffices to establish the result for an arbitrary event $A\in \mathcal{F}^{(k)}_T$; the claim then follows by taking $A=\Delta_T(n)$.

Recalling Lemma \ref{lemmaFromTheMeasureQToTheMeasureEunifS3}, we apply the Gamma integral formula
\[
 \frac{\Gamma(k)}{x^{k}}=\int_0^\infty y^{k-1}e^{-yx}\,{\rm d}y,
\]
together with Fubini's theorem to the second term on the right-hand side. This yields
\[
\begin{split}
 \Q^{(k),\bm \theta}_{\bo w, T,r}&\left[\frac{\mathbf{1}_{A}e^{\bm \theta\cdot \bo Z_{T}}}{\sum_{\bo v\in \mathcal{N}^{(k)}_T}\bo w^{\bo D_{\bo v}}}\right] =
\Q^{(k),\bm \theta}_{\bo w, T,r}\left[
\frac{(\bo Z_T\cdot \bo w)^k}{\sum_{\bo v\in \mathcal{N}^{(k)}_T}\bo w^{\bo D_{\bo v}}}
\cdot
\frac{\mathbf{1}_{A}e^{\bm \theta\cdot \bo Z_{T}}}{(\bo Z_T\cdot \bo w)^k}
\right] \\
&\hspace{2.5cm} =
\frac{1}{(k-1)!}\int_0^\infty \phi^{k-1}
\Q^{(k),\bm \theta}_{\bo w, T,r}\left[
\frac{(\bo Z_T\cdot \bo w)^k}{\sum_{\bo v\in \mathcal{N}^{(k)}_T}\bo w^{\bo D_{\bo v}}}
\mathbf{1}_{A}e^{(\bm \theta-\phi\bo w)\cdot \bo Z_{T}}
\right]{\rm d}\phi.
\end{split}
\]
Next, we use the change of measure \eqref{defMeasureQkTrGivenAllInfoS3}. For any $\bm \mu \in \mathbb{R}^d_+$, we have
\[
\begin{split}
\Q^{(k),\bm \theta}_{\bo w, T,r}&\left[
\frac{(\bo Z_T\cdot \bo w)^k \mathbf{1}_{A}e^{(\bm \theta-\bm \mu)\cdot \bo Z_{T}}}
{\sum_{\bo v\in \mathcal{N}^{(k)}_T}\bo w^{\bo D_{\bo v}}}
\right]  \\
&\qquad\qquad =
\Q^{(k),\bm \mu}_{\bo w, T,r}\left[
\frac{(\bo Z_T\cdot \bo w)^k \mathbf{1}_{A}}
{\sum_{\bo v\in \mathcal{N}^{(k)}_T}\bo w^{\bo D_{\bo v}}}
\right]
\frac{\E_{r}\left[e^{-\bm \mu\cdot \bo Z_T}\sum_{\bo v\in \mathcal{N}^{(k)}_T}\bo w^{\bo D_{\bo v}}\right]}
{\E_{r}\left[e^{-\bm \theta\cdot \bo Z_T}\sum_{\bo v\in \mathcal{N}^{(k)}_T} w^{\bo D_{\bo v}}\right]}.
\end{split}
\]

Combining the above identities yields the desired result.
\end{proof}

\subsection{Asymptotic formulas for the first splitting event}\label{sectionAsymptoticFpormulas}
In this subsection,  we rescale our multitype tree with $k$-spines by a factor of $T^{-1}$ and  replace the parameter  $\bm \theta$ with 
\begin{equation}\label{deftheta}
{\bm \theta}_T:=\bm \theta \sum_{m\in [d]}\p_{m}\paren{\bo Z_T\ne 0}.
\end{equation}
In other words, under $\Q^{(k),\bm \theta_T}_{\cdot, T,r}$, the resulting tree, together with its entire genealogical structure, has lifespan one. The purpose of this subsection is to establish the fundamental tools required to derive the asymptotic behaviour of the event $\Delta_T(n)$, under $\Q^{(k),\bm \theta_T}_{\cdot, T,r}$,  as $T\to \infty$.

We start with the first splitting time for the uniform sampling scheme. 
Take $\rho\in(0,1)$ and we replace $\tau_1$ and $\theta$ in \cite{AHP-p1}, Corollary 3, with  $\tau_1/T$  and $\bm \theta_T$ respectively, that is
\begin{equation}\label{eqnJointLawSplittingTimeChildrenPartitionScaledBeforeTheLimit}
\begin{split}
	\Q^{(k),\bm \theta_T}_{T,r}&\left(\frac{\tau_1}{T}\in {\rm d}\rho, \#\mathcal{P}_{\rho}=(a_{m,q})_{m\in[d], q\in [g_m]}, {\bo L}_{\rho}=\bm{\ell} , c(\varsigma^{(1)}_{\rho-})=i\right)\\
&
=
\frac{k!}{\prod_{m=1}^d\prod_{q=1}^{g_m} a_{m,q}!\prod_{n\geq 1}d_{m,n}!}\alpha_{i}\E_r\left[\bo L^{\floor{\bo g}}\prod_{m=1}^d\E_m\left[e^{-\bm \theta_T \cdot \bo Z_{T(1-\rho)}}\right]^{L^{(m)}-g_m}\right] 
\\
	&\times\frac{\bm \ell^{\floor{\bo g}}p_{i}(\bm \ell)\prod_{m=1}^d \E_m\left[e^{-\bm \theta_T \cdot \bo Z_{T(1-\rho)}}\right]^{\ell_m-g_m}}{\E_r\left[\bo L^{\floor{\bo g}}\prod_{m=1}^d\E_m\left[e^{-\bm \theta_T \cdot \bo Z_{T(1-\rho)}}\right]^{L^{(m)}-g_m}\right]}\E_r\left[ Z^{(i)}_{\rho T}\prod_{m=1}^d\E_m\left[e^{-\bm \theta_T \cdot \bo Z_{T(1-\rho)}}\right]^{Z^{(m)}_{\rho T}-\delta_{i,m}}\right]\\
&\hspace{5.8cm} \times \frac{\prod_{m=1}^d\prod_{q=1}^{g_m} \E_m\left[N^{\floor{a_{m,q}}}_{T(1-\rho)}e^{-\bm \theta_T \cdot \bo Z_{T(1-\rho)}}\right]}{\E_r\left[N^{\floor{k}}_{T}e^{-\bm \theta_T\cdot \bo Z_{T}}\right]}T{\rm d} \rho ,
\end{split}
\end{equation}
where we recall that ${\bo L}_{\rho}$ denotes the offspring distribution of the vertex involved in the first spine splitting event, here denoted by $v(1)$, and $d_{m,n}={\rm card}\{q:a_{m,q}=n \}$ . 

Before we state our next result, we recall that
\[
{\bm\ell}^{\floor{\bo g}}{\bm \xi}^{\bo g}=\prod_{\substack{m=1,\\ g_m\ne 0}}^d (\ell_m)^{\floor{g_m}}\prod_{m=1}^d\xi_m^{g_{m}}.
\]
The following result identifies the limit of the above expression and reveals that, in the limit, only binary splittings are possible.

\begin{teo}\label{teoJointLawSplittingTimeChildrenPartitionScaledBeforeTheLimit}
Consider $k\geq 2$ and assume that assumption \eqref{hyprifa} is fulfilled. For $(a_{m,q})_{m\in [d],q\in [g_m]}$ such that either
\begin{itemize} 
\item[i)] there exist $m^*$ with $a_{m^*,1}>0$, $a_{m^*,2}>0$ and $a_{m^*,1}+a_{m^*,2}=k$ or 
\item[ii)] there are $m^*\ne \tilde{m}$ such that $a_{m^*,1}>0, a_{\tilde{m},1}>0$ with $a_{m^*,1}+a_{\tilde{m},1}=k$, 
\end{itemize}
then, 
\begin{equation}\label{limitQ}
\begin{split}
	\lim_{T\to \infty}\Q^{(k),\bm \theta_T}_{T,r}&\paren{\frac{\tau_1}{T}\in {\rm d}\rho ,\#\mathcal{P}_{\rho}=(a_{m,q})_{m\in[d], q\in [g_m]},{\bo L}_\rho=\bm{\ell} , c(\varsigma^{(1)}_{\rho-})=i}\\
& 
\hspace{2cm}= \alpha_i\eta_i p_i(\bm \ell)\bm \ell^{\floor{\bo g}}{\bm \xi}^{\bo g} (1-\rho)^{k-2}\frac{2}{\zeta} \frac{\paren{1+\bm \eta \cdot \bm \theta}^{k-1}}{ \paren{1+(1-\rho)\bm \eta \cdot \bm \theta}^{k}}{\rm d}\rho.
\end{split}
\end{equation} 
Furthermore, for any other choice of $(a_{m,q})_{m\in [d],q\in [g_m]}$, the previous limit equals 0.
\end{teo}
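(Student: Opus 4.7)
The plan is to carry out a direct asymptotic analysis of the exact identity \eqref{eqnJointLawSplittingTimeChildrenPartitionScaledBeforeTheLimit} supplied by Corollary~\ref{coroSplittingEventWithPartitionChildAndType}, sending $T\to\infty$ factor by factor and using the two inputs recalled in the proposition just after \eqref{hyprifa}: the extinction asymptotic $\p_m(\bo Z_t\neq\bo 0)\sim 2\xi_m/(\zeta t)$ and the Yaglom-type limit $(\bo Z_t/t\mid\bo Z_t\neq \bo 0)\Rightarrow(\zeta/2)\mathbb{e}\,\bm\eta$. Setting $F_{t,m}:=\E_m[e^{-\bm\theta_T\cdot \bo Z_t}]$, a combination of these two results gives
\[
F_{T(1-\rho),m}\;=\;1-\frac{2\xi_m\,\bm\eta\cdot\bm\theta}{\zeta T\bigl(1+(1-\rho)\bm\eta\cdot\bm\theta\bigr)}+o(1/T),
\]
so $F_{T(1-\rho),m}^{\ell_m-g_m}\to 1$ for each bounded exponent, while the two occurrences of $\E_r[\bo L^{\floor{\bo g}}\prod_m F_{T(1-\rho),m}^{L_m-g_m}]$ in \eqref{eqnJointLawSplittingTimeChildrenPartitionScaledBeforeTheLimit} cancel exactly. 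Only three non-trivial factors then remain: the combinatorial prefactor, the mixed middle factor $\E_r[Z^{(i)}_{\rho T}\prod_m F_{T(1-\rho),m}^{Z^{(m)}_{\rho T}-\delta_{i,m}}]$, and the ratio of weighted falling-factorial moments of $N$.

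The two key moment asymptotics that I would establish are, for each fixed $a\in\na$,
\[
\E_m\!\left[N^{\floor{a}}_{T(1-\rho)}e^{-\bm\theta_T\cdot \bo Z_{T(1-\rho)}}\right]\sim\xi_m\,(\bm\eta\cdot\bo 1)^{a}\left(\frac{\zeta T(1-\rho)}{2}\right)^{\!a-1}\frac{a!}{(1+(1-\rho)\bm\eta\cdot\bm\theta)^{a+1}},
\]
coming from the extinction asymptotic combined with $N_t/t\to(\zeta/2)(\bm\eta\cdot\bo 1)\mathbb{e}$ under the Yaglom conditioning and the gamma integral $\E[\mathbb{e}^a e^{-c\mathbb{e}}]=a!/(1+c)^{a+1}$; and
\[
\E_r\!\left[Z^{(i)}_{\rho T}\prod_{m=1}^d F_{T(1-\rho),m}^{Z^{(m)}_{\rho T}-\delta_{i,m}}\right]\longrightarrow \xi_r\,\eta_i\,\frac{(1+(1-\rho)\bm\eta\cdot\bm\theta)^2}{(1+\bm\eta\cdot\bm\theta)^2},
\]
obtained by observing that $\sum_m(-\log F_{T(1-\rho),m})Z^{(m)}_{\rho T}=\frac{2\bm\eta\cdot\bm\theta}{\zeta T(1+(1-\rho)\bm\eta\cdot\bm\theta)}\,\bm\xi\cdot\bo Z_{\rho T}+o(1)$, whose Yaglom limit is $\frac{\rho\,\bm\eta\cdot\bm\theta}{1+(1-\rho)\bm\eta\cdot\bm\theta}\,\mathbb{e}$ using the normalisation $\bm\xi\cdot\bm\eta=1$, and coupling this with $Z^{(i)}_{\rho T}/(\rho T)\to(\zeta/2)\eta_i\,\mathbb{e}$ and the identity $\E[\mathbb{e}\,e^{-\alpha\mathbb{e}}]=1/(1+\alpha)^2$.

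Power counting in $T$ then forces the binary restriction. The numerator ratio of moments scales as $T^{\sum_{m,q}(a_{m,q}-1)}=T^{k-\sum_m g_m}$, the denominator as $T^{k-1}$, and together with the Jacobian $T\,d\rho$ the overall scaling is $T^{2-\sum_m g_m}$; hence only the two binary cases $\sum_m g_m=2$ survive, while any $\sum_m g_m\geq 3$ contributes $O(T^{-1})$, which proves the second assertion. In the binary case the factors $(\bm\eta\cdot\bo 1)^k$ and $\xi_r$ cancel between numerator and denominator, all powers of $T$ and $\zeta/2$ assemble into the single prefactor $2/\zeta$, and what remains is $\alpha_i\eta_i p_i(\bm\ell)\bm\ell^{\floor{\bo g}}\bm\xi^{\bo g}\cdot\tfrac{2}{\zeta}(1-\rho)^{k-2}\tfrac{(1+\bm\eta\cdot\bm\theta)^{k-1}}{(1+(1-\rho)\bm\eta\cdot\bm\theta)^k}$, multiplied by the combinatorial factor $C\cdot\prod_{m,q}a_{m,q}!/k!$. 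A short computation shows this last quantity collapses to $1/\prod_{m,n}d_{m,n}!$, which equals $1$ in case (ii) and $1+\mathbf{1}_{\{a_{m^*,1}=a_{m^*,2}\}}$ in case (i), thereby producing exactly the factor $1/(1+\mathbf{1}_{\{\exists m^*: a_{m^*,1}=a_{m^*,2}\}})$ appearing in \eqref{limitQ}.

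The main obstacle I anticipate is upgrading the weak Yaglom convergence to the $L^1$-convergence demanded by both displayed moment asymptotics. This requires uniform integrability of $(N_t/t)^a e^{-\bm\theta_T\cdot \bo Z_t}$ and of the mixed quantity $(Z^{(i)}_{\rho T}/(\rho T))\prod_m F_{T(1-\rho),m}^{Z^{(m)}_{\rho T}}$; the elementary boundedness of $x\mapsto x^a e^{-cx}$ on $[0,\infty)$ is the key input, but it must be combined with the second-moment assumption \eqref{secondmoment} to dominate the unweighted $(a+1)$-th moments, and one must separately handle the degenerate regime $\bm\eta\cdot\bm\theta=0$, where the exponential weight gives no decay—most cleanly by first proving \eqref{limitQ} for $\bm\theta$ in the interior of $\re^d_+$ and then passing to the boundary by continuity.
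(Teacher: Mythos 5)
Your proof follows the paper's own route: starting from the exact identity of Corollary~\ref{coroSplittingEventWithPartitionChildAndType} with $\bm\theta_T$, establishing the three asymptotic lemmas (for the Laplace transform $\E_m[e^{-\bm\theta_T\cdot\bo Z_{T(1-\rho)}}]$, the mixed expectation involving $Z^{(i)}_{\rho T}$, and the discounted falling-factorial moments $\E_m[N^{\floor{a}}_{T(1-\rho)}e^{-\bm\theta_T\cdot\bo Z_{T(1-\rho)}}]$), then carrying out the $T^{2-\sum_m g_m}$ power count that forces binary splits and simplifying the combinatorics to the factor $1/\prod_{m,n}d_{m,n}!$. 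Your observation about uniform integrability (upgrading the Yaglom weak limit to the claimed $L^1$ convergence, and handling the degenerate boundary $\bm\eta\cdot\bm\theta=0$) is a legitimate technical concern that the paper leaves implicit, but it refines rather than replaces the paper's argument; also note a minor bookkeeping slip in your final paragraph where $C\cdot\prod_{m,q}a_{m,q}!/k!$ actually collapses to $\bm\ell^{\floor{\bo g}}/\prod_{m,n}d_{m,n}!$ rather than $1/\prod_{m,n}d_{m,n}!$, since $C$ already contains $\prod_m\binom{\ell_m}{g_m}g_m!=\bm\ell^{\floor{\bo g}}$—the final answer is unaffected.
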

It is important to note that in order to have existence of the limit we necessarily have that $\bo g$ is such that $\sum_{m=1}^d g_m=2$.  Also, note that ${\bm \ell}^{\floor{\bo g}}{\bm \xi}^{\bo g}$ can be simplified as follows
\[
{\bm\ell}^{\floor{\bo g}}{\bm \xi}^{\bo g}=\left\{
\begin{array}{ll}
\ell_{m^*}(\ell_{m^*}-1)\xi^2_{m^*}&  \textrm{in the first case (i),}\\
\ell_{m^*}\ell_{\tilde{m}}\xi_{m^*}\xi_{\tilde{m}}&  \textrm{in the second case (ii).}
\end{array}
\right .
\]

The proof will consist of three asymptotic formulas, which we state as separate Lemmas for simplicity of exposition. 
The latter should be clear from \eqref{eqnJointLawSplittingTimeChildrenPartitionScaledBeforeTheLimit}, since we need to study the asymptotic behaviours  of 
\[
\E_m\left[e^{-\bm \theta_T\cdot \bo Z_{\rho T}}\right],\qquad \E_r\left[ Z^{(i)}_{\rho T}\prod_{m=1}^d\E_m\left[e^{-\bm \theta_T\cdot \bo Z_{T(1-\rho)}}\right]^{Z^{(m)}_{\rho T}-\delta_{i,m}}\right]\quad \textrm{and}\quad 
\E_m\left[N^{\floor{k}}_{\rho T}e^{-\bm \theta_T \cdot \bo Z_{\rho T}}\right],
\]
for any $\rho\in (0,1)$, $m\in [d]$ and $i\in [d]$.

To analyse such asymptotic behaviours, we employ an asymptotic expression for the extinction probability of a MBGW process, under  \eqref{hyprifa}, which we recall below.  From \cite{Penisson2010}, Equation (2.3.12), in Proposition 2.3.4, under our assumptions,  for any $\bo u\in [0,1]^d$ and $m\in [d]$ it holds that
\begin{equation}\label{eqnAsymptoticApproximationOfLaplaceTransformOfMGW}
	F_{t,m}(\bo u)=\E_m\left[\bo u^{\bo Z_{t}}\right]\sim 1-\frac{\xi_m\bm \eta \cdot (\vec{1}-\bo u)}{1+\frac{\zeta }{2}\bm \eta \cdot (\vec{1}-\bo u)t},\qquad \mbox{as }t\to\infty,
\end{equation} 
we also refer  
to \cite{MR0408019}, Section 6.3.1.

Recalling that $\bo e_j$ denotes the vector in $\mathbb{R}^d$ with  value 1 in the $j$-th coordinate and $0$ in all other coordinates,  
the last expression implies that for $t$ large enough
\[
\p_{m}\left(Z^{(j)}_t>0\right)=1-\E_{m}\left[(\vec{1}- \bo e_j)^{\bo Z_t}\right]\sim \frac{\xi_m \bm \eta\cdot  \bo e_j}{1+\frac{\zeta }{2}\bm \eta \cdot \bo e_j t}=\frac{\xi_m \eta_j}{1+\frac{\zeta }{2}\eta_j t}.
\]As $t\to \infty$, the latter is asymptotically
\[
\p_{m}\paren{Z^{(j)}_t>0}\sim \frac{2\xi_m}{\zeta }\frac{1}{t},
\]which does not depend on $j$. 
Similarly, 
\begin{equation}\label{eqnAsymptoticOfProbabilityOfNonExtinction}
\p_{m}\paren{\bo Z_t\neq \bo 0}\sim \frac{\xi_m \bm \eta \cdot \vec{1}}{1+\frac{\zeta }{2}(\bm \eta \cdot \vec{1}) t}\sim \frac{2\xi_m }{\zeta }\frac{1}{t},
\end{equation}
implying that  
\[
\bm \theta_T\sim\frac{2\bm \theta}{\zeta T}, \qquad \mbox{as }\quad T\to\infty,
\]
which follows from its definition, see \eqref{deftheta}. 
Hence, for large $t$, $\p_{m}\paren{\bo Z_t\neq \bo 0}\sim\p_{m}({Z^{(j)}_t>0})$.
We also note that, for any functional of $(\bo Z_s, s\le t)$ of the form $Z^{(j)}_t G(\bo Z_s, s\le t)$, we can either condition on the whole branching process to be non-extinct or on the $j$-th coordinate, since
\begin{equation}\label{eqnChangeFromUnconditionedToConditionedWhenAcomponentMultiplies}
	\begin{split}
		 \E_{m}\left[Z^{(j)}_t G(\bo Z_s, s\le t)\right] 
		 & = \E_{m}\left[\left.Z^{(j)}_t G(\bo Z_s, s\le t)\ \right|  Z^{(j)}_t\neq 0\right]\p_{m}( Z^{(j)}_t\neq 0)\\
		& = \E_{m}\left[\left.Z^{(j)}_t G(\bo Z_s, s\le t)\ \right| \bo Z_t\neq \bo 0\right]\p_{m}(\bo Z_t\neq \bo 0). \\
	\end{split}
\end{equation}

In view of the preceding discussion, we are now in a position to derive asymptotic expressions for the three terms appearing in \eqref{eqnJointLawSplittingTimeChildrenPartitionScaledBeforeTheLimit}.  The following lemma provides an estimate of the Laplace transform of a MBGW process.

\begin{lemma}\label{lemmaLaplaceTransformOfMtypeBranchingProcess}
Let $\rho\in [0,1)$. Under assumption \eqref{hyprifa}, we have 
\begin{equation}\label{eqnLaplaceTransformOfMtypeBranchingProcess}
	\E_m\left[e^{-\bm \theta_T \cdot \bo Z_{T(1-\rho)}}\right]\sim 1-\frac{2\xi_m}{\zeta}\frac{\bm \eta\cdot \bm \theta}{1+(1-\rho)\bm \eta\cdot \bm \theta}\frac{1}{T}, \qquad \mbox{as }\quad T\to\infty.
\end{equation}
\end{lemma}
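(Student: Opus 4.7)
The plan is to apply the asymptotic formula \eqref{eqnAsymptoticApproximationOfLaplaceTransformOfMGW} directly with the substitution $\bo u=e^{-\bm{\theta}_T}$ (meaning the vector with $j$-th coordinate $e^{-\theta_{T,j}}$) and $t=T(1-\rho)$, where $\bm{\theta}_T=\frac{2}{\zeta T}\bm{\theta}$. The entire argument is a careful Taylor expansion together with tracking of error terms, so the main task is bookkeeping rather than any genuinely delicate estimate.

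First, I would expand $\bo{1}-\bo{u}$ coordinate-wise: since $\theta_{T,j}=\frac{2\theta_j}{\zeta T}\to 0$ as $T\to\infty$,
\begin{equation*}
1-e^{-\theta_{T,j}}=\theta_{T,j}+O(T^{-2})=\frac{2\theta_j}{\zeta T}+O(T^{-2}),
\end{equation*}
so that
\begin{equation*}
\bm{\eta}\cdot(\bo{1}-\bo{u})=\frac{2}{\zeta T}\bm{\eta}\cdot\bm{\theta}+O(T^{-2}).
\end{equation*}
Substituting this into the denominator of \eqref{eqnAsymptoticApproximationOfLaplaceTransformOfMGW} with $t=T(1-\rho)$ gives
\begin{equation*}
1+\frac{\zeta}{2}\bm{\eta}\cdot(\bo{1}-\bo{u})\,t=1+(1-\rho)\bm{\eta}\cdot\bm{\theta}+O(T^{-1}),
\end{equation*}
and into the numerator gives $\xi_m\bm{\eta}\cdot(\bo{1}-\bo{u})=\frac{2\xi_m}{\zeta T}\bm{\eta}\cdot\bm{\theta}+O(T^{-2})$.

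Combining these two expansions and taking the ratio (the denominator is bounded below by $1$ for all $T$ large enough since $\bm\eta,\bm\theta$ have nonnegative entries), we obtain
\begin{equation*}
\frac{\xi_m\bm{\eta}\cdot(\bo{1}-\bo{u})}{1+\frac{\zeta}{2}\bm{\eta}\cdot(\bo{1}-\bo{u})\,t}
=\frac{2\xi_m}{\zeta}\frac{\bm{\eta}\cdot\bm{\theta}}{1+(1-\rho)\bm{\eta}\cdot\bm{\theta}}\frac{1}{T}+O(T^{-2}),
\end{equation*}
which, after applying \eqref{eqnAsymptoticApproximationOfLaplaceTransformOfMGW} and recalling that the error in that asymptotic is $o(1/t)=o(1/T)$ uniformly for $\bo{u}$ near $\bo{1}$, yields exactly \eqref{eqnLaplaceTransformOfMtypeBranchingProcess}.

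The only subtlety worth checking carefully is the uniformity of the approximation in \eqref{eqnAsymptoticApproximationOfLaplaceTransformOfMGW} as $\bo{u}\to\bo{1}$ together with $t\to\infty$ at the linked rate $\bo{1}-\bo{u}\sim c/t$; this is standard and follows from the proof of Proposition 2.3.4 in \cite{Penisson2010} (or Satz 6.3.1 in \cite{MR0408019}), where the remainder is shown to be of smaller order than the leading term under the second moment hypothesis $\zeta<\infty$. Apart from this uniformity point, the proof is a straightforward computation which I expect to fit in a few lines.
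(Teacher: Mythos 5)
Your proposal is correct, but it takes a genuinely different route from the paper. You substitute $\bo u=e^{-\bm\theta_T}$, $t=T(1-\rho)$ directly into the asymptotic \eqref{eqnAsymptoticApproximationOfLaplaceTransformOfMGW} and Taylor-expand, which hinges on that asymptotic being \emph{uniform} in the joint regime $\bo 1-\bo u\asymp 1/t$. You correctly identify this as the one subtlety; it is indeed the crucial point, since as stated the reference gives a $t\to\infty$ asymptotic and one must confirm that the $o(1/t)$ remainder is uniform for $\bo u$ in a $1/t$-neighbourhood of $\bo 1$ (this is standard for critical processes under the second-moment hypothesis, but it is an extra input). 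The paper circumvents the uniformity issue entirely: it decomposes $\E_m[e^{-\bm\theta_T\cdot\bo Z_{T(1-\rho)}}]$ on survival versus extinction, applies the Yaglom limit $(\bo Z_t/t\,|\,\bo Z_t\ne\bo 0)\stackrel{(d)}{\to}\tfrac{\zeta}{2}\mathbf{e}\,\bm\eta$ to compute the conditional expectation in the limit (legitimate since $\bm\theta\in\re^d_+$ makes the test function bounded and continuous), and then multiplies by the survival probability asymptotic \eqref{eqnAsymptoticOfProbabilityOfNonExtinction}, which is only needed at a fixed corner $\bo u$. Both proofs are sound; yours is shorter and more mechanical, while the paper's is more self-contained relative to the results it has already displayed and sidesteps the uniform-asymptotic requirement.
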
 
\begin{proof}
Decomposing on whether the process survives or not at time $T(1-\rho)$, we have
\begin{align*}
	 \E_m\left[e^{-\bm \theta_T \cdot \bo Z_{T(1-\rho)}}\right]& = \E_m\left[e^{-\bm \theta_T \cdot \bo Z_{T(1-\rho)}} \Big|\bo Z_{T(1-\rho)}\neq \bo 0\right]\p_{m}\paren{\bo Z_{T(1-\rho)}\neq \bo 0}+\p_{m}\paren{\bo Z_{T(1-\rho)}=\bo 0}\\
	& =1-\p_{m}\paren{\bo Z_{T(1-\rho)}\neq \bo 0}\paren{1-\E_m\left[e^{-\bm \theta_T \cdot \bo Z_{T(1-\rho)}} \Big|\bo Z_{T(1-\rho)}\neq \bo 0\right]}.
\end{align*}
On the other hand from Yaglom's limit, see  Proposition \ref{propSophie}, we have 
\begin{align*}
	\E_m\left[e^{-\bm \theta_T \cdot \bo Z_{T(1-\rho)}} \Big|\bo Z_{T(1-\rho)}\neq 0\right] \xrightarrow[t\to \infty]{} \E \left[\exp\Big\{-(1-\rho)\gamma\,\bm \eta\cdot \bm \theta\Big\}\right]=\frac{1}{1+(1-\rho)\bm \eta\cdot \bm \theta}.
\end{align*}From the latter and  the asymptotic in  \eqref{eqnAsymptoticOfProbabilityOfNonExtinction}, we obtain
\begin{align*}
	\E_m\left[e^{-\bm \theta_T \cdot \bo Z_{T(1-\rho)}}\right]& \sim 1-\frac{\xi_m \bm \eta\cdot \vec{1}}{1+\frac{\zeta }{2}T(1-\rho)\bm \eta\cdot \vec{1}}\frac{(1-\rho)\bm \eta\cdot \bm \theta}{1+(1-\rho)\bm \eta\cdot \bm \theta}\\
	& \sim 1-\frac{2\xi_m}{\zeta}\frac{\bm \eta\cdot \bm \theta}{1+(1-\rho)\bm \eta\cdot \bm \theta}\frac{1}{T}\qquad \mbox{as }\quad T\to\infty,
\end{align*}
as expected.
\end{proof}

The next lemma provides control over the branching-off the spines terms.
Since it will be applied in different contexts and not only to the limit of Theorem \ref{teoJointLawSplittingTimeChildrenPartitionScaledBeforeTheLimit}, we present it in a more general form. 

\begin{lemma}
Let $\rho_0,\rho_1\in [0,1)$ with $\rho_0<\rho_1$, and $r,i_1\in [d]$.
 Under assumption \eqref{hyprifa}, as $T$ increases, we have 
\begin{equation}\label{eqnProductOfLaplaceTransformsShS0OfMtypeBranchingProcess}
	\E_{r}\left[Z^{(i_1)}_{(\rho_1-\rho_0)T}\prod_{m=1}^{d}\E_m \left[e^{-\bm \theta_{T}\cdot \bo Z_{T(1-\rho_1)}}\right]^{Z^{(m)}_{(\rho_1-\rho_0)T}}\right]\sim \eta_{i_1}\xi_{r} \paren{\frac{1+(1-\rho_1)\bm \eta\cdot \bm \theta}{1+(1-\rho_0)\bm \eta\cdot \bm \theta}}^{2}.
\end{equation}
\end{lemma}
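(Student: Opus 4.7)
The plan is to set $s:=(\rho_1-\rho_0)T$ and to introduce the vector $\bo s_T$ with components $s_{T,m}:=\E_m[e^{-\bm \theta_T\cdot \bo Z_{T(1-\rho_1)}}]$, so that the quantity of interest rewrites as $\E_{i_0}[Z^{(i_1)}_s\,\bo s_T^{\bo Z_s}]$. Writing $\bo s_T=\bo 1-\bo u_T$, Lemma~\ref{lemmaLaplaceTransformOfMtypeBranchingProcess} gives
\[
u_{T,m}\sim \frac{2\xi_m}{\zeta T}\,\frac{\bm\eta\cdot\bm\theta}{1+(1-\rho_1)\bm\eta\cdot\bm\theta},
\]
so that $T\bo u_T$ converges to a deterministic multiple of $\bm\xi$.

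Since $Z^{(i_1)}_s=0$ on $\{\bo Z_s=\bo 0\}$, the target expectation equals
\[
s\,\p_{i_0}(\bo Z_s\neq \bo 0)\,\E_{i_0}\!\left[\tfrac{Z^{(i_1)}_s}{s}\,\bo s_T^{\bo Z_s}\,\Big|\,\bo Z_s\neq \bo 0\right].
\]
The Yaglom-type limit stated in the introductory Proposition gives $\bo Z_s/s\stackrel{(d)}{\to}\tfrac{\zeta}{2}\mathbb{e}\,\bm\eta$ conditionally on $\{\bo Z_s\neq \bo 0\}$, so all coordinates share the same limiting exponential $\mathbb{e}$. Using $\bo u_T\cdot\bo Z_s=(s/T)(T\bo u_T)\cdot(\bo Z_s/s)$ together with $s/T=\rho_1-\rho_0$ and the key simplification $\bm\eta\cdot\bm\xi=1$, I obtain the joint convergence
\[
\paren{\tfrac{Z^{(i_1)}_s}{s},\,\bo u_T\cdot\bo Z_s}\stackrel{(d)}{\to}\paren{\tfrac{\zeta}{2}\eta_{i_1}\mathbb{e},\,(\rho_1-\rho_0)A\,\mathbb{e}},\qquad A:=\frac{\bm\eta\cdot\bm\theta}{1+(1-\rho_1)\bm\eta\cdot\bm\theta}.
\]
Combining this with the expansion $\log(1-u_{T,m})=-u_{T,m}+O(T^{-2})$ and the fact that $N_s/T^2\to 0$ in probability (so $\bo s_T^{\bo Z_s}=\exp(-\bo u_T\cdot\bo Z_s+o(1))$) yields
\[
\E_{i_0}\!\left[\tfrac{Z^{(i_1)}_s}{s}\,\bo s_T^{\bo Z_s}\,\Big|\,\bo Z_s\neq \bo 0\right]\longrightarrow \tfrac{\zeta}{2}\eta_{i_1}\,\E\!\left[\mathbb{e}\,e^{-(\rho_1-\rho_0)A\mathbb{e}}\right]=\tfrac{\zeta}{2}\eta_{i_1}\paren{\frac{1+(1-\rho_1)\bm\eta\cdot\bm\theta}{1+(1-\rho_0)\bm\eta\cdot\bm\theta}}^{\!2},
\]
where the last equality uses $\int_0^\infty xe^{-(1+B)x}{\rm d}x=(1+B)^{-2}$ with $B=(\rho_1-\rho_0)A$ and the algebraic identity $1+(\rho_1-\rho_0)A=(1+(1-\rho_0)\bm\eta\cdot\bm\theta)/(1+(1-\rho_1)\bm\eta\cdot\bm\theta)$.

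Finally, multiplying by $\p_{i_0}(\bo Z_s\neq \bo 0)\sim 2\xi_{i_0}/(\zeta s)$ from \eqref{eqnAsymptoticOfProbabilityOfNonExtinction} and the prefactor $s$, the factors $s$ and $\zeta/2$ cancel and produce the claimed $\eta_{i_1}\xi_{i_0}$ times the square factor. The main obstacle is justifying the passage to the limit inside the conditional expectation, which requires uniform integrability of $Z^{(i_1)}_s/s$ under $\p_{i_0}(\,\cdot\mid \bo Z_s\neq \bo 0)$; under the finite-variance hypothesis $\zeta<\infty$, a standard second-moment computation for critical MBGW processes gives $\E_{i_0}[(Z^{(i_1)}_s)^2]=O(s)$, and combined with $\p_{i_0}(\bo Z_s\neq \bo 0)\asymp 1/s$ this yields $\E_{i_0}[(Z^{(i_1)}_s/s)^2\mid \bo Z_s\neq \bo 0]=O(1)$, providing the needed $L^2$ bound and hence the desired interchange of limit and expectation.
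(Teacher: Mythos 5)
Your proposal is essentially the same argument as the paper's proof: both condition on $\{\bo Z_{(\rho_1-\rho_0)T}\neq\bo 0\}$, insert the survival-probability asymptotic \eqref{eqnAsymptoticOfProbabilityOfNonExtinction}, replace $\E_m[e^{-\bm\theta_T\cdot\bo Z_{T(1-\rho_1)}}]$ by $1-c_{1-\rho_1,m}/T$ via Lemma~\ref{lemmaLaplaceTransformOfMtypeBranchingProcess}, rescale by $(\rho_1-\rho_0)T$, invoke the Yaglom-type limit $\bo Z_s/s\mid\{\bo Z_s\neq\bo 0\}\stackrel{(d)}{\to}\tfrac{\zeta}{2}\mathbb{e}\,\bm\eta$, simplify with $\bm\eta\cdot\bm\xi=1$, and evaluate the resulting Gamma-type integral. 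The only substantive difference is that you explicitly flag and close the uniform-integrability gap needed to pass the Yaglom convergence-in-distribution through the conditional expectation; the paper's proof treats this silently via a chain of $\sim$ equivalences, so your $L^2$ bound (from $\E_{i_0}[(Z^{(i_1)}_s)^2]=O(s)$ and $\p_{i_0}(\bo Z_s\neq\bo 0)\asymp 1/s$) is a welcome tightening of the same route rather than a different argument. One small thing worth being explicit about in the write-up: the uniform integrability is needed not only for $Z^{(i_1)}_s/s$ but for the full product $\tfrac{Z^{(i_1)}_s}{s}\,\bo s_T^{\bo Z_s}$; since $\bo s_T^{\bo Z_s}\in(0,1]$ the $L^2$ bound you state does dominate the product, so the argument goes through, but it is worth saying so.
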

\begin{proof}
	From \eqref{eqnAsymptoticOfProbabilityOfNonExtinction}, \eqref{eqnChangeFromUnconditionedToConditionedWhenAcomponentMultiplies} and \eqref{eqnLaplaceTransformOfMtypeBranchingProcess}, we observe that, for $T$ large enough,
\begin{align*}
	\E_{r}&\left[Z^{(i_1)}_{(\rho_1-\rho_0)T}\prod_{m=1}^d\E_m\left[e^{-\bm \theta_T\cdot \bo Z_{T(1-\rho_1)}}\right]^{Z^{(m)}_{(\rho_1-\rho_0)T}}\right]\\
	& =\E_{r}\left[Z^{(i_1)}_{(\rho_1-\rho_0)T}\prod_{m=1}^d\E_m \left[e^{-\bm \theta_T\cdot \bo Z_{T(1-\rho_1)}}\right]^{Z^{(m)}_{(\rho_1-\rho_0)T}}\Big|\bo Z_{(\rho_1-\rho_0)T}\neq \bo 0\right]\p_{r}\paren{\bo Z_{(\rho_1-\rho_0)T}\neq \bo 0}\\
	& \sim \E_{r}\left[\frac{Z^{(i_1)}_{(\rho_1-\rho_0)T}}{(\rho_1-\rho_0)T}\prod_{m=1}^d\paren{1-\frac{c_{1-\rho_1,m}}{T}}^{\frac{Z^{(m)}_{(\rho_1-\rho_0)T}}{(\rho_1-\rho_0)T}(\rho_1-\rho_0)T}\Big|\bo Z_{(\rho_1-\rho_0)T}\neq \bo 0\right]\frac{2}{\zeta}\xi_{r}\\
	& \sim \E\left[\frac{\zeta}{2}\eta_{i_1} \gamma\,\prod_{m=1}^d\exp\left\{-(\rho_1-\rho_0)c_{1-\rho_1,m}\frac{\zeta}{2}\eta_m \gamma\right\} \right]\frac{2}{\zeta}\xi_{r} \\
	& =\xi_{r}\eta_{i_1}\E\left[ \gamma\,\exp\left\{-\gamma(\rho_1-\rho_0)\frac{\zeta}{2}\sum_{m=1}^d c_{1-\rho_1,m}\eta_m \right\} \right],
\end{align*}
 where, for $x> 0$,
\[
c_{x,m}:=\frac{\xi_m}{\frac{\zeta}{2}}\frac{\bm \eta\cdot \bm \theta}{1+x\bm \eta\cdot \bm \theta}.
\]
Recalling that $\bm \eta\cdot \bm \xi =1$, then
\[
\frac{\zeta}{2}\sum_{m=1}^d c_{1-\rho_1,m}\eta_m=\frac{\bm \eta\cdot \bm \theta}{1+(1-\rho_1)\bm \eta\cdot \bm \theta},
\]implying, when $T\to\infty$, that
\begin{align*}
	\E_{r}&\left[Z^{(i_1)}_{(\rho_1-\rho_0)T}\prod_{m=1}^d\E_m\left[e^{-\bm \theta_{T}\cdot \bo Z_{T(1-\rho_1)}}\right]^{Z^{(m)}_{(\rho_1-\rho_0)T}}\right]\\
&\qquad\qquad \qquad  \sim \xi_{r}\eta_{i_1}\E\left[ \gamma\exp\left\{-\gamma(\rho_1-\rho_0) \frac{\bm \eta\cdot \bm \theta}{1+(1-\rho_1)\bm \eta\cdot \bm \theta}\right\}\right]\\
	&\qquad \qquad \qquad = \xi_{r}\eta_{i_1} \paren{1+(\rho_1-\rho_0) \frac{\bm \eta\cdot \bm \theta}{1+(1-\rho_1)\bm \eta\cdot \bm \theta}}^{-2}.
\end{align*}This completes the proof.
\end{proof}

The next lemma gives us an asymptotic expression for the terms corresponding to splitting events.

\begin{lemma}
	Let $\rho\in [0,1)$.
 Under assumption \eqref{hyprifa}, for any $k\in \na$, as $T$ increases, we have 
\begin{equation}\label{eqnDiscountedPowerOfSizeOfGaltonWatson}\begin{split}
&\E_m\left[N_{(1-\rho)T}^{\floor{k}}e^{-\bm \theta_T \cdot \bo Z_{(1-\rho)T}}\right] \sim \paren{(1-\rho)T\frac{\zeta}{2}}^{k-1} \frac{k!\xi_m(\bm \eta\cdot \vec{1})^k}{\paren{1+(1-\rho)\bm \eta\cdot \bm \theta }^{k+1}}.
\end{split}\end{equation}
\end{lemma}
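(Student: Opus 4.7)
The plan is to condition on non-extinction and apply a Yaglom-type moment convergence, with the exponential discounting factor providing the uniform integrability needed. Setting $t := (1-\rho)T$ and noting that $\bo Z_t = \bo 0$ forces $N_t = 0$, I would first write
\[
\E_m\left[N_t^{\floor{k}} e^{-\bm\theta_T\cdot \bo Z_t}\right] = \E_m\left[N_t^{\floor{k}} e^{-\bm\theta_T\cdot \bo Z_t} \,\big|\, \bo Z_t\neq \bo 0\right]\p_m(\bo Z_t\neq \bo 0),
\]
so that the factor $\p_m(\bo Z_t\neq \bo 0)\sim 2\xi_m/(\zeta t)$ from \eqref{eqnAsymptoticOfProbabilityOfNonExtinction} contributes exactly the $1/T$ that accounts for the drop from $T^k$ to $T^{k-1}$ in the target formula.

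Rescaling the surviving expectation by $t^k$, the Yaglom convergence $\bo Z_t/t \mid \bo Z_t\neq \bo 0 \xrightarrow{(d)} (\zeta/2)\mathbf{e}\,\bm\eta$ recalled in the Introduction yields, jointly,
\[
\frac{N_t^{\floor{k}}}{t^k}\xrightarrow{(d)} \bigl((\zeta/2)(\bo 1\cdot \bm\eta)\mathbf{e}\bigr)^k \qquad\text{and}\qquad \bm\theta_T\cdot \bo Z_t = \frac{2}{\zeta T}\bm\theta\cdot \bo Z_t \xrightarrow{(d)} (1-\rho)(\bm\eta\cdot\bm\theta)\mathbf{e},
\]
so by the continuous mapping theorem the product converges in distribution to $\bigl((\zeta/2)(\bo 1\cdot \bm\eta)\mathbf{e}\bigr)^k e^{-(1-\rho)(\bm\eta\cdot\bm\theta)\mathbf{e}}$. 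Taking expectations and using the elementary identity $\E[\mathbf{e}^k e^{-a\mathbf{e}}] = k!/(1+a)^{k+1}$ with $a = (1-\rho)\bm\eta\cdot\bm\theta$ produces the target constant $(\zeta/2)^k(\bo 1\cdot\bm\eta)^k k!/(1+(1-\rho)\bm\eta\cdot\bm\theta)^{k+1}$; combining with $t^k$ and the probability of survival gives exactly the claimed expression.

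The one substantive step is lifting distributional convergence to convergence in mean. For $\bm\theta$ with strictly positive components and $\theta_{\min} := \min_j \theta_j$, the inequality $\bm\theta_T\cdot \bo Z_t \ge (2\theta_{\min}(1-\rho)/\zeta)(N_t/t)$ gives the deterministic bound
\[
(N_t/t)^k e^{-\bm\theta_T\cdot \bo Z_t} \le \sup_{x\ge 0} x^k e^{-c x},\qquad c := \tfrac{2\theta_{\min}(1-\rho)}{\zeta},
\]
which is finite; dominated convergence then applies. This is the main obstacle; for $\bm\theta$ with zero components one can monotonically shrink a strictly positive vector down to $\bm\theta$, since the right-hand side of the claimed asymptotic depends continuously on $\bm\theta$.
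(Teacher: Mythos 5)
Your proposal takes essentially the same route as the paper's proof: condition on non-extinction, use the survival-probability asymptotic together with the Yaglom limit, and evaluate the resulting exponential moment via $\E[\mathbf{e}^k e^{-a\mathbf{e}}]=k!/(1+a)^{k+1}$. You additionally spell out the uniform-integrability step (via the deterministic bound $(N_t/t)^k e^{-\bm\theta_T\cdot\bo Z_t}\le\sup_{x\ge 0}x^k e^{-cx}$), which the paper's proof takes for granted; note, though, that the closing remark about shrinking a positive vector down to a $\bm\theta$ with zero components is both unnecessary and, for $k\ge 3$, actually unjustified, since under the standing second-moment hypothesis the $k$-th factorial moment $\E_m[N_t^{\floor{k}}]$ need not be finite when the discounting is removed—the lemma is only ever invoked with $\bm\theta$ strictly positive componentwise, precisely so that the discounting makes the expectation finite.
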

\begin{proof}
	
First, we decompose 
\begin{align*}
& \E_m \left[N_{(1-\rho)T}^{\floor{k}} e^{-\bm \theta_T \cdot \bo Z_{(1-\rho)T}}\right]= \E_m \left[\left.N_{(1-\rho)T}^{\floor{k}}e^{-\bm \theta_T \cdot \bo Z_{(1-\rho)T}}\right| \bo Z_{(1-\rho)T}\neq \bo 0\right]\p_m\paren{\bo Z_{(1-\rho)T}\neq \bo 0}.
\end{align*}Observe that when $\{\bo Z_{(1-\rho)T}\neq \bo 0,N_{(1-\rho)T}<k\}$, the above term is zero. 
Thus, using our previous computation for the survival probability, we obtain \begin{align*}
	 \E_m&\left[N_{(1-\rho)T}^{\floor{k}} e^{-\bm \theta_T \cdot \bo Z_{(1-\rho)T}}\right]\sim \frac{\xi_m}{\frac{\zeta }{2}(1-\rho)T} \E_m \left[\left.N_{(1-\rho)T}^{\floor{k}} e^{-\bm \theta_T \cdot \bo Z_{(1-\rho)T}}\right| \bo Z_{(1-\rho)T}\neq \bo 0\right]\\
	&\hspace{.2cm} \sim ((1-\rho)T)^{k-1} \frac{\xi_m}{\zeta /2} \E_m \left[\left.\paren{\frac{N_{(1-\rho)T}}{(1-\rho)T} }^{k} \exp\left\{-(1-\rho)\frac{2}{\zeta}\bm \theta \cdot \frac{\bo Z_{(1-\rho)T}}{(1-\rho)T}\right\}\right| \bo Z_{(1-\rho)T}\neq \bo 0\right]\\
	&\hspace{.2cm} \sim \paren{(1-\rho)T\frac{\zeta}{2}}^{k-1} \xi_m \E \left[\left.\paren{\gamma\,\bm \eta\cdot \vec{1}}^{k}\exp\left\{-(1-\rho)\bm \eta\cdot \bm \theta \gamma\right\}\right.\right]\\
	& \hspace{.2cm}= \paren{(1-\rho)T\frac{\zeta}{2}}^{k-1} \frac{k!\xi_m(\bm \eta\cdot \vec{1})^k}{\paren{1+(1-\rho)\bm \eta\cdot \bm \theta }^{k+1}},
\end{align*}where we have used that $\E\left[\gamma^k\exp\{-\lambda \gamma\}\right]=k! (1+\lambda )^{-k-1}$. This completes the proof.
\end{proof}

With these asymptotic results at hand, we now derive Theorem \ref{teoJointLawSplittingTimeChildrenPartitionScaledBeforeTheLimit}.

\begin{proof}[Proof of Theorem \ref{teoJointLawSplittingTimeChildrenPartitionScaledBeforeTheLimit}]
Observe from Equation \eqref{eqnJointLawSplittingTimeChildrenPartitionScaledBeforeTheLimit}, that the only terms that depend on $T$ are
\begin{equation}\label{eqnJointLawSplittingTimeChildrenPartitionScaledBeforeTheLimitDependingOnT}
\begin{split}
	&\left(\prod_{m=1}^d \E_m\left[e^{-\bm \theta \cdot \bo Z_{T(1-\rho )}}\right]^{\ell_m-g_m}\right)\E_r\left[ Z^{(i)}_{\rho T}\prod_{m=1}^d\E_m\left[e^{-\bm \theta \cdot \bo Z_{T(1-\rho )}}\right]^{Z^{(m)}_{\rho T}-\delta_{i,m}}\right]\\
&\hspace{5cm} \times \frac{\prod_{m=1}^d\prod_{q=1}^{g_m} \E_m\left[N^{\floor{a_{m,q}}}_{\rho T}e^{-\bm \theta \cdot \bo Z_{T(1-\rho )}}\right]}{\E_r\left[N^{\floor{k}}_{T}e^{-\bm \theta\cdot \bo Z_{T}}\right]}T.
\end{split}
\end{equation}
From \eqref{eqnLaplaceTransformOfMtypeBranchingProcess}, \eqref{eqnProductOfLaplaceTransformsShS0OfMtypeBranchingProcess} and  \eqref{eqnDiscountedPowerOfSizeOfGaltonWatson}, as $T$ increases, we deduce
\begin{align*}
	& \E_r\left[ Z^{(i)}_{\rho T}\prod_{m=1}^d\E_m\left[e^{-\bm \theta \cdot \bo Z_{T(1-\rho )}}\right]^{Z^{(m)}_{\rho T}}\right]\frac{\prod_{m=1}^d\prod_{q=1}^{g_m} \E_m\left[N^{\floor{a_{m,q}}}_{\rho T}e^{-\bm \theta \cdot \bo Z_{T(1-\rho )}}\right]}{\E_r\left[N^{\floor{k}}_{T}e^{-\bm \theta\cdot \bo Z_{T}}\right]}T\\
	& \sim \eta_i\xi_r \paren{1+\rho \frac{\bm \eta \cdot\bm \theta}{1+(1-\rho )\bm \eta \cdot\bm \theta}}^{-2}\\
	& \qquad\qquad\times \frac{\prod_{m=1}^d\prod_{q=1}^{g_m}\paren{(1-\rho )T\frac{\zeta}{2}}^{a_{m,q}-1} \xi_m(\bm \eta \cdot\vec{1})^{a_{m,q}}a_{m,q}!\paren{1+(1-\rho )\bm \theta \cdot \bm \eta }^{-a_{m,q}-1}}{\paren{T\frac{\zeta}{2}}^{k-1} \xi_r(\bm \eta \cdot\vec{1})^kk!\paren{1+\bm \theta \cdot \bm \eta }^{-k-1}}T\\
	& =\eta_i \paren{1+\rho \frac{\bm \eta \cdot\bm \theta}{1+(1-\rho )\bm \eta \cdot\bm \theta}}^{-2}(\bm \eta \cdot\vec{1})^{\sum_m \sum_q a_{m,q}}\paren{1+(1-\rho )\bm \theta \cdot \bm \eta }^{-\sum_m \sum_q(a_{m,q}+1)}\\
	& \qquad\qquad\qquad\qquad\qquad\qquad\times \prod_{m=1}^d\prod_{q=1}^{g_m} \xi_ma_{m,q}! \frac{\paren{(1-\rho )T\frac{\zeta}{2}}^{\sum_m \sum_q (a_{m,q}-1)}}{\paren{T\frac{\zeta}{2}}^{k-1} (\bm \eta \cdot\vec{1})^kk!\paren{1+\bm \theta \cdot \bm \eta }^{-k-1}}T.
\end{align*}On the other hand, from Lemma \ref{lemmaLaplaceTransformOfMtypeBranchingProcess}, the remainder terms in \eqref{eqnJointLawSplittingTimeChildrenPartitionScaledBeforeTheLimitDependingOnT} behave, as $T\to \infty$, as follows
\[
\prod_{m=1}^d \E_m\left[e^{-\bm \theta \cdot \bo Z_{T(1-\rho)}}\right]^{\ell_m-g_m}\E_i\left[e^{-\bm \theta \cdot \bo Z_{T(1-\rho)}}\right]^{-1}=1+o(1). 
\]In other words, recalling that $\sum_m \sum_q a_{m,q}=k$ and that $\sum_m g_m=:\# b$ denotes the number of blocks in the partition associated with $(a_{m,q})_{m\in[d], q\in [g_m]}$, we obtain that, as $T$ increases, expression \eqref{eqnJointLawSplittingTimeChildrenPartitionScaledBeforeTheLimitDependingOnT} behaves  as 
\begin{align*}
&\eta_i \paren{\frac{1+\bm \eta \cdot\bm \theta}{1+(1-\rho)\bm \eta \cdot\bm \theta}}^{-2}\frac{(1-\rho)^{k-\#b}}{\paren{1+(1-\rho)\bm \eta \cdot \bm \theta }^{k+\# b}} \bm \xi^{\bo g}\prod_{q=1}^{g_m}a_{m,q}!\\
& \qquad\qquad \qquad \qquad \qquad \qquad \qquad \qquad \qquad  \times \paren{\frac{\zeta}{2}}^{1-\# b}\frac{1}{k!}\frac{1}{\paren{1+\bm \eta \cdot \bm \theta  }^{-k-1}}T^{2-\# b}{\rm d}\rho \\
	& =\eta_i   (1-\rho)^{k-\#b}\paren{\frac{\zeta}{2}}^{1-\# b} \frac{ \paren{1+\bm \eta \cdot\bm \theta}^{k-1}}{\paren{1+(1-\rho)\bm \eta \cdot\bm \theta}^{k+\#b-2}}T^{2-\# b}\frac{1}{k!}\bm \xi^{\bo g}\left(\prod_{m=1}^d\prod_{q=1}^{g_m} a_{m,q}!\right){\rm d}\rho.
\end{align*}From the latter, we observe that the previous probability depends on $T$ by the factor $T^{2-\# b}$. 
That is to say, that the only way that the previous probability is not negligible is when we have binary splitting.  
Therefore, when $\# b=2$, we only have two cases: either all marks in the splitting event follow two different individuals of the same type or all marks follow two different individuals with different types. 
The first scenario, consists of marks following type $m^*\in [d]$, where $a_{m^*,1}>0$, $a_{m^*,2}>0$, $a_{m^*,1}+a_{m^*,2}=k$, $g_{m^*}=2$ and everything else equals zero. 
On the second case, for some $m^*,n^*\in [d]$ we have $a_{m^*,1}>0$, $a_{n^*,1}>0$, $a_{m^*,1}+a_{n^*,1}=k$, $g_{m^*}=1$ and $g_{n^*}=1$.
Hence, we finally obtain
\begin{align*}
	\lim_{T\to \infty}\Q^{(k),\bm \theta_T}_{T,r}&\paren{\frac{\tau_1}{T}\in {\rm d}\rho ,\#\mathcal{P}_{\rho}=(a_{m,q})_{m\in[d], q\in [g_m]},{\bo L}_\rho=\bm{\ell} , c(\varsigma^{(1)}_{\rho-})=i}\\
&
=\frac{k!}{\prod_{m=1}^d\prod_{q=1}^{g_m} a_{m,q}!}\alpha_{i} p_i(\bm \ell)\bm \ell^{\floor{\bo g}}\\
	& \hspace{2.5cm}\times \eta_i   (1-\rho)^{k-2}\frac{2}{\zeta} \frac{\paren{1+\bm \eta \cdot\bm \theta}^{k-1}}{ \paren{1+(1-\rho)\bm \eta \cdot\bm \theta}^{k}}\frac{1}{k!}\bm\xi ^{\bo g}\paren{\prod_{m=1}^d\prod_{q=1}^{g_m} a_{m,q}!}{\rm d}\rho\\
&
=  \alpha_i\eta_i p_i(\bm \ell)\bm \ell^{\floor{\bo g}}\bm \xi^{\bo g} (1-\rho)^{k-2}\frac{2}{\zeta} \frac{\paren{1+\bm \eta \cdot \bm \theta}^{k-1}}{ \paren{1+(1-\rho)\bm \eta \cdot \bm \theta}^{k}}{\rm d}\rho.
\end{align*}
This completes the proof.
\end{proof}

Next, we sum over $\bm \ell\in \mathbb{Z}^d_+$ and $i\in [d]$ to derive the following corollary. To facilitate this, we recall several important identities. Recall that 
\[
f_i(\bo{r})=\sum_{{\bm \ell}\in \mathbb{Z}^d_+}p_i(\bm{\ell})\bm{r^\ell}, \qquad\textrm{and}\qquad m_{i,j}=\frac{\partial}{\partial r_j}f_i(\vec{1}). 
\]
For simplicity on exposition, we write
\[
Q^{(i)}_{j,k}:=\frac{\partial^2 f_i}{\partial r_j \partial r_k}(\vec{1})=\sum_{\bm \ell\in \z_+^d}p_i(\bm \ell)\ell_{j}(\ell_{k}-\mathbf{1}_{\{k=j\}})\quad\textrm{thus} \quad \zeta=\sum_{i,\ell,k=1}^d\alpha_i Q^{(i)}_{\ell,k}\eta_i\xi_{\ell}\xi_k.
\]
\begin{coro}\label{corolarioantesconst} Under assumption \eqref{hyprifa},
	we have
	\[
	\lim_{T\to \infty}\Q^{(k),\bm \theta_T}_{T,r}\paren{\frac{\tau_1}{T}\in {\rm d}\rho}=(k-1)(1-\rho)^{k-2} \frac{\paren{1+\bm \eta \cdot \bm \theta}^{k-1}}{ \paren{1+(1-\rho)\bm \eta \cdot \bm \theta}^{k}}{\rm d}\rho.
	\]
\end{coro}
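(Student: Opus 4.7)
The plan is to derive the marginal density of $\tau_1/T$ by summing the joint-law formula of Theorem \ref{teoJointLawSplittingTimeChildrenPartitionScaledBeforeTheLimit} over the type $i \in [d]$, the offspring vector $\bm \ell \in \z_+^d \setminus \{\bo 0\}$, and the block-size configuration $(a_{m,q})_{m\in [d], q\in [g_m]}$. Since the limiting measure is supported on binary splittings only (equivalently $\sum_m g_m = 2$), the admissible configurations fall into two families: case (i), $\bo g = 2\bo e_{m^*}$ for a single type $m^*$, where both spine-carrying offspring share the same type; and case (ii), $\bo g = \bo e_{m^*} + \bo e_{\tilde m}$ for two distinct types $m^* \neq \tilde m$, where the two spine-carrying offspring have different types.

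For fixed $i$ and $\bm \ell$, the first step is to evaluate the inner sum over $(a_{m,q})$. In case (i), fixing the shared type $m$, one sums over unordered pairs $\{a_{m,1}, a_{m,2}\}$ of positive integers with $a_{m,1} + a_{m,2} = k$, each weighted by $1/(1+\mathbf{1}_{\{a_{m,1}=a_{m,2}\}})$; a direct case-check separating the parities of $k$ shows the weighted count equals $(k-1)/2$, producing the contribution $\tfrac{k-1}{2}\sum_m \ell_m(\ell_m-1)\xi_m^2$. In case (ii), the indicator vanishes (no type has two blocks), and for each unordered pair $\{m^*,\tilde m\}$ one sums freely over $a_{m^*,1} \in \{1,\dots,k-1\}$ with $a_{\tilde m,1} = k - a_{m^*,1}$, yielding $k-1$ per pair and total contribution $\tfrac{k-1}{2}\sum_{m\ne n}\ell_m\ell_n\xi_m\xi_n$. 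Adding both cases and recognising the identity $w(\bm \ell) = \sum_m \ell_m(\ell_m-1)\xi_m^2 + \sum_{m \neq n}\ell_m\ell_n\xi_m\xi_n$ already recorded just above the corollary, the inner sum collapses to $\tfrac{k-1}{2}w(\bm \ell)$.

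The second step is the outer sum: summing against $p_i(\bm \ell)$ gives $\tfrac{k-1}{2}\E_i[w(\bo L)]$, and summing against $\alpha_i\eta_i$ yields $\tfrac{k-1}{2}\zeta$ by the identity $\zeta = \sum_i \alpha_i\eta_i\E_i[w(\bo L)]$. Substituting back into the theorem's formula, the prefactor $\tfrac{k-1}{2}\cdot \tfrac{2}{\zeta}\cdot \zeta$ simplifies, and combines with the surviving $\rho$-dependent factor $(1-\rho)^{k-2}\,(1+\bm\eta\cdot\bm\theta)^{k-1}/(1+(1-\rho)\bm\eta\cdot\bm\theta)^k$ to produce the stated marginal density (which integrates to one over $\rho\in[0,1]$, as a direct check confirms).

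The main delicate point will be the combinatorial sum in case (i), where the weight $1/(1+\mathbf{1}_{\{a_{m,1}=a_{m,2}\}})$ forces a split into the sub-cases $k$ odd and $k$ even, the latter containing the diagonal pair $(k/2,k/2)$ that contributes with an extra factor $1/2$; one must verify that the weighted count is $(k-1)/2$ in both parities. Everything else reduces to bookkeeping using the explicit definitions of $w$ and $\zeta$ together with the identity for $w(\bm \ell)$.
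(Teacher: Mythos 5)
Your approach is the same as the paper's (sum the joint limit from Theorem \ref{teoJointLawSplittingTimeChildrenPartitionScaledBeforeTheLimit} over $i$, $\bm\ell$, and the block-size configuration, then absorb the total into $\zeta$), and your combinatorics are executed more carefully than the paper's own proof. But the conclusion you draw at the end is not correct: you compute the prefactor as $\tfrac{k-1}{2}\cdot\tfrac{2}{\zeta}\cdot\zeta = k-1$, and then assert this ``produces the stated marginal density,'' which has prefactor $2$. These coincide only for $k=3$; for general $k$ your computation does \emph{not} reproduce the stated formula, and you should have flagged this discrepancy rather than gloss over it.

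In fact the constant $k-1$ that your calculation produces is the right one. It is the unique constant for which the density integrates to one: with $u=1-\rho$ and $a=\bm\eta\cdot\bm\theta$,
\[
\int_0^1 (k-1)u^{k-2}\frac{(1+a)^{k-1}}{(1+ua)^k}\,{\rm d}u
= (1+a)^{k-1}\int_0^{1/(1+a)} (k-1)v^{k-2}\,{\rm d}v = 1,
\]
using the substitution $v=u/(1+ua)$. By contrast the stated formula with $2$ integrates to $2/(k-1)$, which is not a probability for $k>3$. Moreover, item (a) of the ``Dynamics of the tree at the first splitting time'' recorded immediately after the theorem's proof already states the density as $(k-1)(1-\rho)^{k-2}(1+\bm\eta\cdot\bm\theta)^{k-1}/(1+(1-\rho)\bm\eta\cdot\bm\theta)^k$, so the corollary as printed is in direct contradiction with the paper's own surrounding text. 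The paper's own proof of the corollary silently discards the $(k-1)/2$ factor: passing from the penultimate display to the last one implicitly sets $\sum_{a_{m,1}+a_{m,2}=k}\bigl(1+\mathbf{1}_{\{a_{m,1}=a_{m,2}\}}\bigr)^{-1}=1$, which is false for $k>3$. Your write-up should state explicitly that the corollary's coefficient should read $k-1$, not $2$, and cite the normalisation check and item (a) as corroboration.
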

\begin{proof}
We first sum over $\bm \ell\in \mathbb{Z}^d_+$ in \eqref{limitQ}, and deduce
\begin{align*}
	\lim_{T\to \infty}\Q^{(k),\bm \theta_T}_{T,r}&\paren{\frac{\tau_1}{T}\in {\rm d}\rho ,\#\mathcal{P}_{\rho}=(a_{m^*,1},a_{m^*,2}) , c(\varsigma^{(1)}_{\rho-})=i}\\
&
= \alpha_i\eta_i Q^{(i)}_{m^*,m^*}\xi_{m^*}^{2}
	(1-\rho)^{k-2}\frac{2}{\zeta} \frac{\paren{1+\bm \eta \cdot \bm \theta}^{k-1}}{ \paren{1+(1-\rho)\bm \eta \cdot \bm \theta}^{k}}{\rm d}\rho,
\end{align*}or
\begin{align*}
	\lim_{T\to \infty}\Q^{(k),\bm \theta_T}_{T,r}&\paren{\frac{\tau_1}{T}\in {\rm d}\rho ,\#\mathcal{P}_{\rho}=(a_{m^*,1},a_{n^*,1}), c(\varsigma^{(1)}_{\rho-})=i}\\
	& = \alpha_i\eta_i Q^{(i)}_{m^*,n^*}\xi_{m^*}\xi_{n^*}
	(1-\rho)^{k-2}\frac{2}{\zeta} \frac{\paren{1+\bm \eta \cdot \bm \theta}^{k-1}}{ \paren{1+(1-\rho)\bm \eta \cdot \bm \theta}^{k}}{\rm d}\rho,
\end{align*}
accordingly to each situation. 
Now we sum over all possible block sizes $(a_{m^*,1},a_{m^*,2})$ or $(a_{m^*,1},a_{n^*,2})$. 
In both cases, this is equivalent to summing over all pairs $(j,k-j)$ with $j\in [k-1]$. 
Since there are $k-1$ such pairs, we obtain
\begin{align*}
	\lim_{T\to \infty}\Q^{(k),\bm \theta_T}_{T,r}&\paren{\frac{\tau_1}{T}\in {\rm d}\rho, c(\varsigma^{(1)}_{\rho-})=i}
=(1-\rho)^{k-2}\frac{2}{\zeta}\frac{\paren{1+\bm \eta \cdot \bm \theta}^{k-1}}{ \paren{1+(1-\rho)\bm \eta \cdot \bm \theta}^{k}}\\
&\hspace{2.5cm}\times\Bigg(\sum_{m\in [d]}\sum_{j\in [k-1]}\alpha_i\eta_i Q^{(i)}_{m,m}\xi_{m}^{2}+\sum_{\substack{m, n\in [d]\\m\ne n}}\sum_{j\in [k-1]}\alpha_i\eta_i Q^{(i)}_{m,n}\xi_{m}\xi_{n}
	 \Bigg){\rm d}\rho\\
& =(k-1)(1-\rho)^{k-2}\frac{2}{\zeta}\frac{\paren{1+\bm \eta \cdot \bm \theta}^{k-1}}{ \paren{1+(1-\rho)\bm \eta \cdot \bm \theta}^{k}}\left(\sum_{m,n\in[d]}\alpha_i\eta_i Q^{(i)}_{m,n}\xi_{m}\xi_{n}\right){\rm d}\rho.
\end{align*}Finally, summing over all types $i$ and using the definition of $\zeta$ above, gives us the desired result.
\end{proof}

\label{constructionlimit} Let us now provide an interpretation of all the terms involved in the limit law obtained in Theorem \ref{teoJointLawSplittingTimeChildrenPartitionScaledBeforeTheLimit}. 
First note   that
\begin{align*}
	\zeta&=\sum_{i=1}^d\alpha_i\eta_i\paren{\sum_{\substack{m,n\in[d]\\ m\neq n}} \E_{i}\left[L^{(m)}L^{(n)}\right]\xi_{m}\xi_{n}+\sum_{m\in[d]} \E_i\left[L^{(m)}\paren{L^{(m)}-1}\right]\xi_{m}^2}.\\
\end{align*}

Recalling from \eqref{zetayw} the definitions of  $\zeta_i$, for $i\in [d]$, and $w(\bm \ell)$, for $\bm \ell\in \z^d_+\setminus \{\bo 0\}$, as well as the identity  $\zeta=\sum_{i=1}^d \zeta_i$,  we can now rewrite
\begin{align*}
	\lim_{T\to \infty}\Q^{(k),\bm \theta_T}_{T,r}&\paren{\frac{\tau_1}{T}\in {\rm d}\rho ,\#\mathcal{P}_{\rho}=(a_{m,q})_{m\in[d], q\in [g_m]},{\bo L}_\rho=\bm{\ell} , c(\varsigma^{(1)}_{\rho-})=i}\\
	 & = \frac{\zeta_i }{\zeta}\frac{p_i(\bm \ell)w(\bm \ell)}{\E_i\left[w({\bo L})\right]}\frac{\bm \ell^{\floor{\bo g}}\bm \xi^{\bo g}}{w(\bm \ell)}\frac{1}{k-1}\times
	 (k-1)(1-\rho)^{k-2} \frac{\paren{1+\bm \eta \cdot \bm \theta}^{k-1}}{ \paren{1+(1-\rho)\bm \eta \cdot \bm \theta}^{k}}{\rm d}\rho.
\end{align*}
The interpretation of each term is as follows:
\begin{itemize}
	\item[a)] the time  at which the $k$ spines first split apart, under $\lim_{T\to \infty}\Q^{(k),\bm \theta_T}_{T,r}$, has density
	\[
	(k-1)(1-\rho)^{k-2} \frac{\paren{1+\bm \eta \cdot \bm \theta}^{k-1}}{ \paren{1+(1-\rho)\bm \eta \cdot \bm \theta}^{k}}{\rm d}\rho,
	\]
	\item[b)] the $k$ spines split into groups of sizes $(\mathcal{U}, k-\mathcal{U})$ where $\mathcal{U}$ has a uniform distribution on $[k-1]$, so the probability that the spines split into groups of sizes $(h,k-h)$ is  $\frac{1}{k-1}.$
	\item[c)] The probability that the type of the spine is $i$, immediately before splitting,  is $
	\zeta_i/\zeta.$
	\item[d)] Given that the type of the spine is $i$, immediately before splitting, the probability there are $\bm \ell$ children at the  splitting event is
	\[
	\frac{p_i(\bm \ell)w(\bm \ell)}{\E_i\left[w({\bo L})\right]}.
	\]
	\item[e)] Given that the type of the spine is $i$, immediately before splitting, and there are $\bm \ell$ children at the  splitting event, the probability that the types of the offspring carrying the spines  are $(n,m)$, is
\[
\begin{cases}
\frac{\ell_m\ell_n\xi_m\xi_n}{w(\bm \ell)}&\mbox{if $m\neq n$} \\
\frac{\ell_m(\ell_m-1)\xi_m^2}{w(\bm \ell)} &\mbox{if $m= n$}. \end{cases}
\]
In other words from the $\bm \ell$ offspring, we choose two offspring without replacement according to the weights $\bm \xi$. 
\end{itemize}
It is important to note  that (a) and (b) characterise the sample genealogy completely. Actually (a) provides the split times and (b) the tree topology.

Alternatively,  we may have another description by replacing (d) and (e) by the following:
\begin{itemize}
	\item[d')] Given that the type of the spine is $i$, immediately before splitting, the probability that the types of the offspring carrying the spines are $(n,m)$, is
	\[
\begin{cases}
	\frac{\xi_m\xi_n\E_i\left[L^{(m)}L^{(n)}\right]}{\mathbb{E}_i[w(\bo L)]}& \mbox{if $m\neq n$,}\\
	\frac{\xi_m^2\E_i\left[L^{(m)}(L^{(m)}-1)\right]}{\mathbb{E}_i[w(\bo L)]}&\mbox{if $m= n$}.
\end{cases}
\]
	\item[e')] Given that the type of the spine is $i$, immediately before splitting and the types of the offspring carrying the spines are $(n,m)$, the probability there are $\bm \ell$ children at the splitting event is
	\[
\begin{cases}
	\frac{p_i(\bm \ell)\ell_m\ell_n}{\E_i\left[L^{(m)}L^{(n)}\right]}&\mbox{if $m\neq  n$}\\
	\frac{p_i(\bm \ell)\ell_m(\ell_m-1)}{\E_i\left[L^{(m)}(L^{(m)}-1)\right]}& \mbox{if $m= n$}.
\end{cases}
	\]
	\end{itemize}
This approach involves size-biasing the offspring distribution without replacement. It ensures that at least two types are available for selection and increases the probability of selecting offspring of type $m$, proportional to their prevalence in the population.

\subsection{Joint law of spine splitting events under \texorpdfstring{ $\lim_{T\to \infty}\Q^{(k),\bm \theta_T }_{T,r}$}{TEXT}}
Next, we  consider identity \eqref {eqnJointSplittingEventWithPartitionChildAndTypeV2} in Theorem \ref{propofsplitting1}  with $\bm \theta_T$ instead of $\phi\vec{1}$ and for  specific family of sequences of splitting times, and take limit as $T$ goes to infinity.  
More precisely, we consider the event $\Delta_T(n)$ with $t_h=\rho_h T$, for $h\in [n]$, and  $0<\rho_1<\cdots<\rho_{n}<1$. 

Note that $\Delta_{T}(n)$ fully describes the coalescent structure of the sample. In other words, it specifies the times at which each coalescence event occurs, the partition involved in each event, the number of offspring of the individuals participating in a coalescence, and their types.  Since, in the limit we only  observe binary splitting, we only consider the case  $M=k-1$.  In the binary splitting case, we necessarily have that $\sum_{m\in [d]} g_{h,m}=2$, for $h\in[k-1]$.  

\begin{propo}\label{lemmaLimitOfJointLaw}
Under assumption \eqref{hyprifa}. For $r\in[d]$, we have
\begin{equation}\label{eqnLimitOfJointLawLemma}
	\begin{split}
	\lim_{T\to \infty}&\Q^{(k),\bm \theta_T }_{T,r}\Big(\Delta_T(k-1)\Big) =\frac{(1+\bm \eta \cdot \bm \theta)^{k-1}}{k!}\paren{\frac{2}{\zeta}}^{k-1} \prod_{h=1}^{k-1}\frac{p_{i_h}(\bm \ell_{h})\bm \ell_{h}^{\floor{\bo g_h}}\bm \xi^{\bo g_h}\eta_{i_h} }{(1+(1-\rho_h)\bm \eta \cdot \bm \theta)^{2}}\alpha_{i_{h}} {\rm d}\rho_h.
\end{split}
\end{equation}
\end{propo}

\begin{proof}
	Let us study  the first  term  and all  Laplace transforms of $\bo Z_{\cdot}$ with a deterministic power in the second row  in identity \eqref{eqnJointSplittingEventWithPartitionChildAndTypeV2}. In other words, using the asymptotic in \eqref{eqnLaplaceTransformOfMtypeBranchingProcess}, we see 
	\[
	\begin{split}
	&\left(\prod_{h=1}^{k-1}\prod_{m=1}^d\E_{m}\left[e^{-\bm \theta \cdot \bo Z_{T-t_h}}\right]^{\ell_{h,m} -g_{h,m}}p_{i_h}(\bm \ell_{h})\bm \ell_{h}^{\floor{\bo g_h}}\right)\\
		&\hspace{5cm}\times\prod_{h=0}^{k-2}\prod_{m=1}^d\prod_{\substack{ q\leq g_{h,m}:\\ k_{v(h,m,q)}\geq 2}}\prod_{j=1}^d\E_{j}\left[e^{-\bm \theta\cdot \bo Z_{T-t_{v(h,m,q)}}}\right]^{ -\delta_{i_{v(h,m,q)},j}}\\
	&\hspace{5cm}\sim \prod_{h=1}^{k-1}p_{i_h}(\bm \ell_{h})\bm \ell_{h}^{\floor{\bo g_h}}(1+o(1)). 
\end{split}
\]	
For the remainder terms in the  second row of  \eqref{eqnJointSplittingEventWithPartitionChildAndTypeV2}, we observe,  using \eqref{eqnProductOfLaplaceTransformsShS0OfMtypeBranchingProcess},  that 
\begin{equation}\label{eqnProductOfExponentialOfJointLawAsympoticContribution}
\begin{split}
\prod_{h=0}^{k-2}\prod_{m=1}^d\prod_{\substack{ q\leq g_{h,m}:\\ k_{v(h,m,q)}\geq 2}}\E_{m}&\left[Z^{(c(v(h,m,q)))}_{t_{v(h,m,q)}-t_h} \prod_{j=1}^d\E_{j}\left[e^{-\bm \theta\cdot \bo Z_{T-t_{v(h,m,q)}}}\right]^{ Z^{(j)}_{t_{v(h,m,q)}-t_h}}\right]\\
	&\sim \prod_{h=0}^{k-2}\prod_{m=1}^d\prod_{\substack{ q\leq g_{h,m}:\\ k_{v(h,m,q)}\geq 2}} \xi_{m}\eta_{i_{v(h,m,q)}}\paren{\frac{1+(1-\rho_{v(h,m,q)})\bm \eta \cdot \bm \theta}{1+(1-\rho_{h})\bm \eta \cdot \bm \theta}}^2. 
\end{split}
\end{equation}
Next, we have the following identity
\begin{equation}\label{eqnLimitOfJointLaw2}
\prod_{h=0}^{k-2}\prod_{m=1}^d\prod_{\substack{ q\leq g_{h,m}:\\ k_{v(h,m,q)}\geq 2}} \paren{1+(1-\rho_{v(h,m,q)})\bm \eta \cdot \bm \theta}^2=\prod_{h=1}^{k-1} (1+(1-\rho_h)\bm \eta \cdot \bm \theta)^2,
\end{equation}which follows from the fact that in each vertex $v(h,m,q)$ there is a binary splitting event and there are exactly $k-1$ of such events.

On the other hand, it is straightforward to see
\[
\begin{split}
 \prod_{h=0}^{k-2}\prod_{m=1}^d\prod_{\substack{ q\leq g_{h,m}:\\ k_{v(h,m,q)}\geq 2}} &\paren{\frac{1}{1+(1-\rho_{h})\bm \eta \cdot \bm \theta}}^2=\prod_{h=0}^{k-2}\prod_{m=1}^d \paren{\frac{1}{1+(1-\rho_{h})\bm \eta \cdot \bm \theta}}^{2\# \{q\le g_{h,m} :k_{v(h,m,q)}\geq 2 \}}\\
 &\hspace{2cm}=(1+\bm \eta \cdot \bm \theta)^{-2}\prod_{h=1}^{k-1} \paren{\frac{1}{1+(1-\rho_{h})\bm \eta \cdot \bm \theta}}^{2\# \{(m,q):k_{v(h,m,q)}\geq 2 \}}.
 \end{split}
\]
Now, we note that $\# \{(m,q):k_{v(h,m,q)}\geq 2 \}$ are exactly the number of branches after a splitting event that carry at least two marks, i.e. it takes the values in $\{0,1,2\}$ since $\sum_{m=1}^d g_{h, m}=2$ for any $h\in [k-1]$. 
Thus, the overall contribution of the second line in \eqref{eqnProductOfExponentialOfJointLawAsympoticContribution} is
\begin{align*}
&(1+\bm \eta \cdot \bm \theta)^{-2}\xi_{r}\eta_{i_1}\prod_{h=1}^{k-1} \frac{(1+(1-\rho_h)\bm \eta \cdot \bm \theta)^2}{\paren{1+(1-\rho_{h})\bm \eta \cdot \bm \theta}^{2\# \{(m,q):k_{v(h,m,q)}\geq 2 \}}}\prod_{m=1}^d\prod_{\substack{ q\leq g_{h, m}:\\ k_{v(h,m,q)}\geq 2}}\xi_{m}\eta_{i_{ v(h,m,q)}}.
\end{align*}For the last line in  \eqref{eqnJointSplittingEventWithPartitionChildAndTypeV2}, using  \eqref{eqnDiscountedPowerOfSizeOfGaltonWatson}, we deduce that the numerator contributes as follows
\begin{align*}
	& \prod_{h=1}^{k-1}\prod_{m=1}^{d}\E_{m}\left[N_{T-t_{h}}e^{-\bm \theta \cdot \bo Z_{T-t_{h}}}\right]
		^{\#\{ q\leq g_{h,m}:k_{v(h,m,q)}=1\}}\\
	& \hspace{3cm}\sim \prod_{h=1}^{k-1} \prod_{m=1}^d \paren{\frac{\xi_m (\vec{1}\cdot \bm \eta)}{\big(1+(1-\rho_h)\bm \eta \cdot \bm \theta\big)^{2}}}^{\#\{ q\leq g_{h,m}:k_{v(h,m,q)}=1\}}.
\end{align*}
To simplify the right-hand side of the previous asymptotic, we have that the following identity holds 
\[
\sum_{m=1}^d\#\{ q\leq g_{h,m}:k_{v(h,m,q)}=1\}=\#\{ (m,q):k_{v(h,m,q)}=1\}
\] 
which,  for each $h\in [k-1]$, denotes the number of branches produced in  a splitting event with  only one mark. Thus 
\[
\sum_{h=1}^{k-1}\sum_{m=1}^d\#\{ q\leq g_{h,m}:k_{v(h,m,q)}=1\}=k
\]
  and 
\begin{align*}
	  \prod_{h=1}^{k-1}\prod_{m=1}^{d}\E_{m}\left[N_{T-t_{h}}e^{-\bm \theta \cdot \bo Z_{T-t_{h}}}\right]
		^{\#\{ q\leq g_{h,m}:k_{v(h,m,q)}=1\}}&\sim (\bm \eta\cdot \vec{1})^k\prod_{h=1}^{k-1} \prod_{m=1}^d \xi_m^{\#\{ q\leq g_{h, m}:k_{v(h,m,q)}=1\}} \\
	&\times\prod_{h=1}^{k-1}\frac{1}{\big(1+(1-\rho_h)\bm \eta \cdot \bm \theta\big)^{2\#\{ (m,q):k_{v(h,m,q)}=1\}}}.
\end{align*}The remaining terms on the last line in \eqref{eqnJointSplittingEventWithPartitionChildAndTypeV2}, after using again \eqref{eqnDiscountedPowerOfSizeOfGaltonWatson}, contribute
\begin{align*}
\frac{1}{\E_{r}\left[N_{T}^{\floor{k}}e^{-\bm \theta\cdot \bo Z_{T}}\right]} \prod_{h=1}^{k-1}\alpha_{i_{h}} T{\rm d}\rho_h& \sim\frac{(1+\bm \eta \cdot \bm \theta)^{k+1}}{k!\big(T\frac{\zeta}{2}\big)^{k-1}\xi_{r}(\vec{1}\cdot \bm \eta)^k}T^{k-1}\prod_{h=1}^{k-1}\alpha_{i_{h}}{\rm d}\rho_h\\
	&=\frac{(1+\bm \eta \cdot \bm \theta)^{k+1}}{k!\xi_{r}(\vec{1}\cdot \bm \eta)^k}\paren{\frac{2}{\zeta}}^{k-1}\prod_{h=1}^{k-1}\alpha_{i_{h}} {\rm d}\rho_h.
\end{align*}Putting all pieces together and  noting that 
\[
\# \{(m,q):k_{v(h,m,q)}\geq 1 \}=\sum_{m=1}^dg_{h,m}=2,
\]
 we obtain
\begin{align*}
	\lim_{T\to \infty}&\Q^{(k),\bm \theta }_{T,r}\Big(\Delta_T(k-1)\Big)=\frac{1}{(1+\bm \eta \cdot \bm \theta)^{2}}\prod_{h=1}^{k-1}p_{i_h}(\bm \ell_{h})\bm \ell_{h}^{\floor{\bo g_h}} \frac{(1+(1-\rho_h)\bm \eta \cdot \bm \theta)^2}{\paren{1+(1-\rho_{h})\bm \eta \cdot \bm \theta}^{2\# \{(m,q):k_{v(h,m,q)}\geq 2 \}}}\\
	&\hspace{8cm} \times \xi_{r}\eta_{i_1}\prod_{h=1}^{k-1}\prod_{m=1}^d\prod_{\substack{ q\leq g_{h, m}:\\ k_{v(h,m,q)}\geq 2}}\xi_{m}\eta_{i_{ v(h,m,q)}}\\
	& \qquad \times (\bm \eta\cdot \vec{1})^k\left(\prod_{h=1}^{k-1} \prod_{m=1}^d \xi_m^{\#\{ q\leq g_{h, m}:k_{v(h,m,q)}=1\}}\right) \prod_{h=1}^{k-1}\frac{1}{\big(1+(1-\rho_h)\bm \eta \cdot \bm \theta\big)^{2\#\{ (m,q):k_{v(h,m,q)}=1\}}}\\
	& \hspace{7.5cm} \times \frac{(1+\bm \eta \cdot \bm \theta)^{k+1}}{k!\xi_{r}(\vec{1}\cdot \bm \eta)^k}\paren{\frac{2}{\zeta}}^{k-1}\prod_{h=1}^{k-1}\alpha_{i_{h}} {\rm d}\rho_h\\
	& = \frac{(1+\bm \eta \cdot \bm \theta)^{k-1}}{k!}\prod_{h=1}^{k-1}\frac{2}{\zeta}\frac{p_{i_h}(\bm \ell_{h})\bm \ell_{h}^{\floor{\bo g_h}} \xi_m^{ \bo g_{h}}\eta_{i_h}}{(1+(1-\rho_h)\bm \eta \cdot \bm \theta)^{2}}\alpha_{i_{h}} {\rm d}\rho_h,
\end{align*}
where in the last identity we have used a similar argument as in \eqref{eqnLimitOfJointLaw2}, to deduce 
\[
\eta_{i_1}\prod_{h=1}^{k-1}\prod_{m=1}^d\prod_{\substack{ q\leq g_{h, m}:\\ k_{v(h,m,q)}\geq 2}}\eta_{i_{ v(h,m,q)}}=\prod_{h=1}^{k-1} \eta_{i_h}
\]
and the following identity
\[
\left(\prod_{h=1}^{k-1}\prod_{m=1}^d\prod_{\substack{ q\leq g_{h, m}:\\ k_{v(h,m,q)}\geq 2}}\xi_{m}\right) \left(\prod_{h=1}^{k-1} \prod_{m=1}^d \xi_m^{\#\{ q\leq g_{h, m}:k_{v(h,m,q)}=1\}}\right)= \prod_{m=1}^d \xi_m^{\sum_{h=1}^{k-1}\#\{ q\leq g_{h, m}:k_{v(h,m,q)}\ge 1\}},
\]
which is straightforward to obtain. This concludes the proof.
\end{proof}
Similarly as in the case of one splitting event (see Theorem \ref{teoJointLawSplittingTimeChildrenPartitionScaledBeforeTheLimit} and the comments after the proof),  we may rewrite the terms appearing in Proposition \ref{lemmaLimitOfJointLaw}. 
Recall the construction given in page \pageref{constructionlimit} (just after the proof of Corollary \ref{corolarioantesconst}) and the definitions of $w(\bm \ell)$ and $\zeta_i$ that appear in such construction. Thus \eqref{eqnLimitOfJointLawLemma} can be rewritten as 
\begin{equation}\label{eqnLimitOfJointLaw}
	\begin{split}&\lim_{T\to \infty}\Q^{(k),\bm \theta_T }_{T,r}\Big(\Delta_T(k-1)\Big)\\
&\hspace{1cm} =\frac{(1+\bm \eta \cdot \bm \theta)^{k-1}2^{k-1}}{k!}\prod_{h=1}^{k-1}\frac{\zeta_{i_h}}{\zeta}\frac{p_{i_h}(\bm \ell_{h})w(\bm \ell_{h})}{\mathbb{E}_{i_h}\left[w(\bo L)\right]}\frac{\bm \ell_{h}^{\floor{\bo g_h}}\bm \xi^{\bo g_h}}{w(\bm \ell_{h})}\frac{1 }{(1+(1-\rho_h)\bm \eta \cdot \bm \theta)^{2}}{\rm d}\rho_h.\\
			\end{split}
\end{equation}
\subsection{Colours of the spine before the first spine splitting event under \texorpdfstring{ $\lim_{T\to \infty}\Q^{(k),\bm \theta_T}_{T,r}$}{TEXT}} We now turn out attention to the distribution of the colour process of the vertex carrying  the $k$ marks up to the first splitting time,  under both $\Q^{(k),\bm \theta}_{T,r}$ and its limiting regime $\lim_{T\to \infty}\Q^{(k),\bm \theta_T}_{T,r}$. 

The next result shows, in particular, that under $\Q^{(k),\bm \theta}_{T,r}$,  the colour of the vertex carrying the  $k$ marks evolves as an inhomogeneous Markov chain.

\begin{lemma}\label{lemmaColorOfTheSpineIsMC}
Fix $n\in \na$ and $r\in[d]$.  Consider $(i_h)_{h\in [n]}\in [d]^{n+1}$ and $(t_h)_{h\in [n]}\in [d]^{n}$ with $0<t_1<\cdots <t_n<T$. 
Then
\begin{align*}
 \Q^{(k),\bm \theta}_{T,r}&\left(c(\varsigma_{t_1}^{(1)})=i_1,\ldots, c(\varsigma_{t_n}^{(1)})=i_n,\tau_1>t_n\right)\\
&\hspace{2cm} = \frac{\E_{i_n}\left[N^{\floor{k}}_{T-t_n}e^{-\bm \theta\cdot \bo Z_{T-t_n}}\right]}{\E_{r}\left[N^{\floor{k}}_{T}e^{-\bm \theta\cdot \bo Z_{T}}\right]}\prod_{h=1}^{n}p_{T-t_{h-1}}( C_{t_h-t_{h-1}}=i_h| C_0=i_{h-1}),
\end{align*}
where $t_0=0$, $i_0=r$ and $(C_s)_{s\in[0,T]}$ is an inhomogeneous continuous-time Markov chain with values in $[d]$ and with transitions
\[
p_{T-s}( C_{t-s}=j| C_0=i):=\frac{\E_i\left[Z^{(j)}_{t-s}\prod_{m\in [d]}\E_m\left[e^{-\bm \theta\cdot \bo Z_{T-t}}\right]^{Z^{(m)}_{t-s}}\right]}{\E_i\left[e^{-\bm \theta\cdot \bo Z_{T-t}}\right]}\qquad i,j\in [d],\ 0\le s\le t<T.
\]
\end{lemma}
\begin{proof}
For the proof, we restrict ourselves to the case $n=2$, since the general case can then be obtained directly from this argument by applying the Markov branching property stated in \cite{AHP-p1}, Lemma 3.
Note that on the event of interest, the (unique) vertex carrying all $k$ marks at time $t_1$ has type $i_1$. 
Thus, by the Markov branching property, we have
\begin{align*}
& \Q^{(k),\bm \theta}_{T,r}\left(c(\varsigma_{t_1}^{(1)})=i_1,c(\varsigma_{t_2}^{(1)})=i_2,\tau_1>t_2\right)\\
&\hspace{3cm} =\Q^{(k),\bm \theta}_{T,r}\left(c(\varsigma_{t_1}^{(1)})=i_1,\tau_1>t_1\right)\Q^{(k),\bm \theta}_{T-t_1,i_1}\left(c(\varsigma_{t_2-t_1}^{(1)})=i_2,\tau_1>t_2-t_1\right).
\end{align*}Therefore from \cite{AHP-p1}, Lemma 4,  we obtain
\begin{align*}
& \Q^{(k),\bm \theta}_{T,r}\left(c(\varsigma_{t_1}^{(1)})=i_1,c(\varsigma_{t_2}^{(1)})=i_2,\tau_1>t_2\right)\\
&\hspace{1cm} =\frac{\E_{i_1}\left[N^{\floor{k}}_{T-t_1}e^{-\bm \theta\cdot \bo Z_{T-t_1}}\right]}{\E_{r}\left[N^{\floor{k}}_{T}e^{-\bm \theta\cdot \bo Z_{T}}\right]}\frac{\E_{r}\left[Z^{(i_1)}_{t_1}\prod_{m\in [d]}\E_m\left[e^{-\bm \theta\cdot \bo Z_{T-t_1}}\right]^{Z^{(m)}_{t_1}}\right]}{\E_{i_1}\left[e^{-\bm \theta\cdot \bo Z_{T-t_1}}\right]}\\
& \hspace{3cm}\times\frac{\E_{i_2}\left[N^{\floor{k}}_{T-t_2}e^{-\bm \theta\cdot \bo Z_{T-t_2}}\right]}{\E_{i_1}\left[N^{\floor{k}}_{T-t_1}e^{-\bm \theta\cdot \bo Z_{T-t_1}}\right]}\frac{\E_{i_1}\left[Z^{(i_2)}_{t_2-t_1}\prod_{m\in [d]}\E_m\left[e^{-\bm \theta\cdot \bo Z_{T-t_2}}\right]^{Z^{(m)}_{t_2-t_1}}\right]}{\E_{i_2}\left[e^{-\bm \theta\cdot \bo Z_{T-t_2}}\right]}\\
&\hspace{0cm} =\frac{\E_{i_2}\left[N^{\floor{k}}_{T-t_2}e^{-\bm \theta\cdot \bo Z_{T-t_2}}\right]}{\E_{r}\left[N^{\floor{k}}_{T}e^{-\bm \theta\cdot \bo Z_{T}}\right]}p_{T}( C_{t_1}=i_1| C_0=r)p_{T-t_1}( C_{t_2-t_1}=i_2| C_0=i_1).
\end{align*}
This completes the proof.
\end{proof}

Next, we turn our attention to the limiting finite-dimensional distributions of the colour process of the vertex carrying all marks prior to the first spine splitting event.

\begin{lemma}\label{lemmaColorOfTheSpineIsMCQLimit}
Fix $n\in \na$, $0<\rho_1<\cdots <\rho_n<1$,$ r\in [d]$ and $(i_h)_{h\in [n]}\in [d]^{n}$. 
For any $m\in [d]$, we  let $D_m:=\#\{h\in [n]:i_h=j \}$, and define $\bo D=(D_1,\ldots, D_d)$. 
Then, we have
\[
\begin{split}
\lim_{T\to \infty} \Q^{(k),\bm \theta_T}_{T,r}&\left(c(\varsigma_{\rho_1T}^{(1)})=i_1,\ldots, c(\varsigma_{\rho_nT}^{(1)})=i_n,\tau_1>\rho_nT\right)\\
&\hspace{3cm} =\paren{1-\frac{\rho_n}{1+(1-\rho_n)\bm \eta\cdot \bm \theta }}^{k-1}\bm \eta^{\bo D}\bm \xi^{\bo D}.
\end{split}
\]
\end{lemma}
\begin{proof}
First, we use the asymptotic results obtained in  \eqref{eqnDiscountedPowerOfSizeOfGaltonWatson}, to deduce
\begin{align*}
\lim_{T\to \infty}\frac{\E_{i_n}\left[N^{\floor{k}}_{T(1-\rho_n)}e^{-\bm \theta\cdot \bo Z_{T(1-\rho_n)}}\right]}{\E_{r}\left[N^{\floor{k}}_{T}e^{-\bm \theta\cdot \bo Z_{T}}\right]}&=\lim_{T\to \infty} \frac{\paren{(1-\rho_n)T\frac{\zeta}{2}}^{k-1} \frac{k!\xi_{i_n}(\vec{1}\cdot\bm \eta)^k}{\paren{1+(1-\rho_n)\bm \eta\cdot \bm \theta }^{k+1}}}{\paren{T\frac{\zeta}{2}}^{k-1} \frac{k!\xi_{r}(\vec{1}\cdot\bm \eta)^k}{\paren{1+\bm \eta\cdot \bm \theta }^{k+1}}}\\
& =(1-\rho_n)^{k-1}\frac{\xi_{i_n}}{\xi_{r}}\paren{\frac{1+\bm \eta\cdot \bm \theta }{1+(1-\rho_n)\bm \eta\cdot \bm \theta }}^{k+1}. 
\end{align*}
Recalling the definition of the transition probability $p_T$ introduced in Lemma \ref{lemmaColorOfTheSpineIsMC}, we observe from \eqref{eqnLaplaceTransformOfMtypeBranchingProcess} that the scaled denominator of $p_\cdot$ converges to one.
Combining this fact with \eqref{eqnProductOfLaplaceTransformsShS0OfMtypeBranchingProcess}, we obtain
\begin{align*}
& \lim_{T\to \infty}p_{T(1-\rho_{h-1})}( C_{(\rho_h-\rho_{h-1})T}=i_h| C_0=i_{h-1})= \eta_{i_h}\xi_{i_{h-1}} \paren{\frac{1+(1-\rho_h)\bm \eta\cdot \bm \theta}{1+(1-\rho_{h-1})\bm \eta\cdot \bm \theta}}^{2}.
\end{align*}Putting all pieces together and using Lemma \ref{lemmaColorOfTheSpineIsMC}, we deduce
\[
\begin{split}
\lim_{T\to \infty} \Q^{(k),\bm \theta_T}_{T,r}&\left(c(\varsigma_{\rho_1T}^{(1)})=i_1,\ldots, c(\varsigma_{\rho_nT}^{(1)})=i_n,\tau_1>\rho_nT\right)\\
&\hspace{1cm} =\frac{\xi_{i_n}}{\xi_{r}}\paren{(1-\rho_n)\frac{1+\bm \eta\cdot \bm \theta }{1+(1-\rho_n)\bm \eta\cdot \bm \theta }}^{k-1}\xi_{r}\prod_{h=1}^{n}\eta_{i_h}\prod_{h=1}^{n-1}\xi_{i_h}\\
&\hspace{1cm} =\paren{1-\frac{\rho_n}{1+(1-\rho_n)\bm \eta\cdot \bm \theta }}^{k-1}\prod_{h=1}^{n}\eta_{i_h}\xi_{i_h}.
\end{split}
\]
The result follows from the definition of $\bo D$. 
\end{proof}

\subsection{Joint law of spine splitting events under \texorpdfstring{ $\lim_{T\to \infty}\Q^{(k),\bm \theta_T}_{\bo c, T,r}$}{TEXT}}
The  aim  of this subsection is to establish the analogue of \eqref{eqnLimitOfJointLaw}, under  uniform sampling given a fixed  type configuration scheme, namely $\lim_{T\to \infty}\Q^{(k),\bm \theta_T}_{\bo c, T,r}\big(\Delta_T(k-1)\big)$. To achieve this, we employ the change of measure with respect to $\mathbb{Q}^{(k),\bm{\theta}}_{T,r}$ as introduced in \eqref{eqnComparingQSAndQk}.

Since the first term on the right-hand side of Lemma \ref{coroJointLawAndGivenSampleAreIndependent0} is provided in Theorem \ref{propofsplitting1}, it remains to analyse the limit of each factor in the product of the right-hand side of the identity in \eqref{eqnJointSplittingEventWithPartitionChildAndTypeV3}. We emphasise that the limit in the following result does not depend on $\rho$ or $m$. 

\begin{lemma}\label{lemmaLimitUnderQColorOfSingleSpineIsGiven} Under assumption \eqref{hyprifa},
for any $\rho\in [0,1)$,  $m,c_h\in [d]$ and $h\in [k]$ we have
\begin{equation}\label{eqnLimitUnderQColorOfSingleSpineIsGivenAsym}
\lim_{T\to\infty}\Q^{(1),\bm \theta_T}_{T(1-\rho),m}\paren{c(\varsigma^{(h)}_{T(1-\rho)})=c_h}=\frac{\eta_{c_h}}{\bm \eta\cdot \vec{1}},
\end{equation}
and
\begin{equation}\label{eqnLimitUnderQColorOfSingleSpineIsGiven1}
\lim_{T\to\infty}\Q^{(k),\bm \theta_T}_{T,m}\paren{c(\varsigma^{(h)}_{T})=c_h,\forall\ h\in [k]} =\frac{\bm \eta^{\bo D_{\bo c}}}{(\bm \eta\cdot \vec{1})^{k}}.
\end{equation}
\end{lemma}

Note that this lemma implies that the colours assigned to  each mark  at time $T$ are asymptotically independent, as $T$ increases. The proof requires the following auxiliary result, closely related to \eqref{eqnDiscountedPowerOfSizeOfGaltonWatson}. 
We state this auxiliary result  in a general form, since it will be used later. 

\begin{lemma}\label{eqnAsymptoticZDsExp} Assume that \eqref{hyprifa} is satisfied.
Fix $\rho\in [0,1)$ and a sequence of colours $\bo c^\prime=(c^\prime_1, \cdots, c^\prime_k)$. Then, for any $k\in \na$, as $T$ increases, we have 
\begin{equation}\label{eqnDiscountedPowerOfSizeOfGaltonWatson2}\begin{split}
&\E_m\left[\bo Z^{\floor{\bo D_{\bo c^\prime}}}_{(1-\rho)T}e^{-\bm \theta_T\cdot \bo Z_{(1-\rho)T}}\right] \sim \paren{(1-\rho)T\frac{\zeta}{2}}^{k-1} \frac{k!\xi_m\bm \eta^{\bo D_{\bo c^\prime}}}{\paren{1+(1-\rho)\bm \eta\cdot \bm \theta }^{k+1}}.
\end{split}\end{equation}
In particular when $k=1$ and ${\bo c}^\prime=c^\prime\in[d]$, we have
\begin{equation}\label{eqnDiscountedPowerOfSizeOfGaltonWatson2-1}
\begin{split}
&\E_m\left[ Z^{(c^\prime)}_{(1-\rho)T}e^{-\bm \theta_T\cdot \bo Z_{(1-\rho)T}}\right] \sim  \frac{\xi_m \eta_{c^\prime}}{\paren{1+(1-\rho)\bm \eta\cdot \bm \theta }^{2}}.
\end{split}
\end{equation}
\end{lemma}
\begin{proof}
First, we decompose 
\begin{align*}
& \E_m\left[\bo Z^{\floor{\bo D_{\bo c}}}_{(1-\rho)T}e^{-\bm \theta_T\cdot \bo Z_{(1-\rho)T}}\right]= \E_m \left[\left.\bo Z^{\floor{\bo D_{\bo c}}}_{(1-\rho)T}e^{-\bm \theta_T \cdot \bo Z_{(1-\rho)T}}\right| \bo Z_{(1-\rho)T}\neq \bo 0\right]\p_m\paren{\bo Z_{(1-\rho)T}\neq \bo 0}.
\end{align*}Observe that when $\{\bo Z_{(1-\rho)T}\neq \bo 0\}\cap\{\bo Z_{(1-\rho)T}\geq \bo D_{\bo c}\}^c$, the first term on the right-hand side is zero, implying
\[
\begin{split}
 \E_m\left[\bo Z^{\floor{\bo D_{\bo c}}}_{(1-\rho)T}e^{-\bm \theta_T\cdot \bo Z_{(1-\rho)T}}\right]&= \E_m \left[\left.\bo Z^{\floor{\bo D_{\bo c}}}_{(1-\rho)T}e^{-\bm \theta_T \cdot \bo Z_{(1-\rho)T}}\mathbf{1}_{\{\bo Z_{(1-\rho)T}\geq \bo D_{\bo c}\}}\right| \bo Z_{(1-\rho)T}\neq \bo 0\right]\\
 &\hspace{6cm}\times\p_m\paren{\bo Z_{(1-\rho)T}\neq \bo 0}.
\end{split}
\]
Thus  from Yaglom's limit in Proposition \ref{propSophie} and the asymptotic in \eqref{eqnAsymptoticOfProbabilityOfNonExtinction}, we deduce
\begin{align*}
	 \E_m&\left[\bo Z^{\floor{\bo D_{\bo c}}}_{(1-\rho)T} e^{-\bm \theta_T \cdot \bo Z_{(1-\rho)T}}\right]\sim \frac{\xi_m \paren{(1-\rho)T} ^{\sum_{m=1}^dD_{\bo c,m}}}{\frac{\zeta }{2}(1-\rho)T}\\
	 &\hspace{1.3cm}\times \E_m \left[\left.\prod_{m=1}^d\paren{\frac{Z^{(m)}_{(1-\rho)T}}{(1-\rho)T} }^{D_{\bo c,m}} e^{-(1-\rho)\frac{2}{\zeta}\bm \theta \cdot \frac{\bo Z_{(1-\rho)T}}{(1-\rho)T}}
\mathbf{1}_{\left\{\frac{\bo Z_{(1-\rho)T}}{(1-\rho)T}\geq \frac{\bo D_{\bo c}}{(1-\rho)T}\right\}}	 
	 \right| \bo Z_{(1-\rho)T}\neq \bo 0\right]\\
& \sim  \frac{2\xi_m \paren{(1-\rho)T} ^{k-1}}{\zeta}\E \left[\left.\prod_{m=1}^d\paren{\frac{\zeta}{2}\gamma\, \eta_m}^{D_{\bo c,m}} e^{-(1-\rho)\bm \eta\cdot \bm \theta \gamma}\mathbf{1}_{\{\frac{\zeta}{2}\gamma\, \bm \eta\ge 0\}}\right.\right]\\
&= \frac{2\xi_m \paren{(1-\rho)T} ^{k-1}\prod_{m=1}^d\eta_m^{D_{\bo c,m}}}{\zeta} \E \left[\gamma^{k} e^{-(1-\rho)\bm \eta\cdot \bm \theta \gamma}\right]\\
	&  = \paren{(1-\rho)T\frac{\zeta}{2}}^{k-1} \frac{k!\xi_m\bm \eta^{\bo D_{\bo c}}}{\paren{1+(1-\rho)\bm \eta\cdot \bm \theta }^{k+1}},
\end{align*}
where we  used the facts that  $\sum_{m=1}^dD_{\bo c,m}=k$ and $\E\left[\gamma^k e^{-\lambda \gamma}\right]=k! (1+\lambda )^{-k-1}$. This completes the proof.
\end{proof}

\begin{proof}[Proof of Lemma \ref{lemmaLimitUnderQColorOfSingleSpineIsGiven}] From \eqref{eqnLimitUnderQColorOfSingleSpineIsGiven}, we know  
\[
\Q^{(1),\bm \theta_T}_{T(1-\rho),m}\paren{c(\varsigma^{(h)}_{T(1-\rho)})=c_h} =\frac{\E_m[Z^{(c_h)}_{T(1-\rho)}e^{-\bm \theta_T \cdot \bo Z_{T(1-\rho)}}]}{\E_m\left[N_{T(1-\rho)}e^{-\bm \theta_T\cdot \bo Z_{T(1-\rho)}}\right]}.
\]
Thus, using \eqref{eqnDiscountedPowerOfSizeOfGaltonWatson} and  \eqref{eqnDiscountedPowerOfSizeOfGaltonWatson2-1}, the latter implies
\begin{align*}
\Q^{(1),\bm \theta_T}_{T(1-\rho),m}\paren{c(\varsigma^{(h)}_{T(1-\rho)})=c_h} & \sim \frac{ \frac{\xi_m \eta_{c_h}}{\paren{1+(1-\rho)\bm \eta\cdot \bm \theta }^{2}}}{ \frac{\xi_m(\bm \eta\cdot\vec{1})}{\paren{1+(1-\rho)\bm \eta\cdot \bm \theta }^{2}}}=\frac{\eta_{c_h}}{\bm \eta\cdot\vec{1}}.
\end{align*}
For the second limit,  we recall from  identity \eqref{eqnonumber} that \[
\Q^{(k),\bm \theta_T}_{T,r}\paren{c(\varsigma^{(h)}_{T})=c_h,\forall\ h\in [k]} =\frac{\E_{r}[\bo Z^{\floor{\bo D_{\bo c}}}_{T}e^{-\bm \theta_T \cdot \bo Z_{T}}]}{\E_{r}\left[N^{\floor{k}}_{T}e^{-\bm \theta_T\cdot \bo Z_{T}}\right]}.
\]
Thus the result follows from the asymptotics in \eqref{eqnDiscountedPowerOfSizeOfGaltonWatson} and \eqref{eqnDiscountedPowerOfSizeOfGaltonWatson2}.
\end{proof}

We are now ready to establish the limit of the joint law stated in Proposition \ref{lemmaLimitOfJointLaw}, together with the distribution of the marks having specific colours when sampled.
This result shows, that asymptotically,  the colours of the sampled spines are independent and identically distributed, with common law $(\eta_m/(\bm \eta\cdot \vec{1});m\in [d])$.

\begin{coro}\label{coroJointLawTogetherWithSamplingGivenColors}
Assume that  \eqref{hyprifa} holds. Fix $k\geq 2$, and $\bo c\in [d]^k$, 
then
\begin{equation}\label{eqnJointLawTogetherWithSamplingGivenColors}
	\begin{split}
	&\lim_{T\to \infty}\Q^{(k),\bm \theta_T }_{T,r}\Big(\Delta_T(k-1),c(\varsigma^{(h)}_{T})=c_h,\forall\ h\in [k]\Big) =\lim_{T\to \infty}\Q^{(k),\bm \theta_T }_{T,r}\Big(\Delta_T(k-1)\Big)\frac{\bm \eta^{\bo D_{\bo c}}}{(\bm \eta\cdot \vec{1})^k},
\end{split}
\end{equation}where the limit in the right-hand side is  given in Proposition \ref{lemmaLimitOfJointLaw}.
\end{coro}
\begin{proof}
The result follows directly from Lemma \ref{coroJointLawAndGivenSampleAreIndependent0}  and Lemma \ref{lemmaLimitUnderQColorOfSingleSpineIsGiven}. 
\end{proof}

Recall the notation in Lemma \ref{coroJointLawAndGivenSampleAreIndependent0}.
In the following result, we show that under the measure $\Q^{(\bo c),\bm \theta_T }_{T,r}$, the limiting joint probability of $\Delta_T(n)$ coincides with that  under $\Q^{(k),\bm \theta_T }_{T,r}$.

\begin{propo}\label{teoJointLawUnderQSSameLawAsUnderQk} Under assumption \eqref{hyprifa}, it holds 
\[
\lim_{T\to \infty}\Q^{(k),\bm \theta_T}_{\bo c, T,r}\big(\Delta_T(k-1)\big)=\lim_{T\to \infty}\Q^{(k),\bm \theta_T }_{T,r}\big(\Delta_T(k-1)\big),
\]where the right-hand side is  given in Proposition \ref{lemmaLimitOfJointLaw}.
\end{propo}
\begin{proof}
The result follows applying directly Lemma \ref{lemmaLimitUnderQColorOfSingleSpineIsGiven} and Corollary \ref{coroJointLawTogetherWithSamplingGivenColors} to  Proposition \ref{teoJointLawUnderQkbutnewver} with $n=k-1$.\end{proof}

\subsection{Joint law of spine splitting events under \texorpdfstring{ $\lim_{T\to \infty}\Q^{(k),\bm \theta_T}_{\bo w, T,r}$}{TEXT}}
Our aim here is to obtain the analogue of Proposition \ref{lemmaLimitOfJointLaw} (or \eqref{eqnLimitOfJointLaw}), 
for the regime of sampling according to type dependent weights, that is, to evaluate
$\lim_{T\to \infty}\Q^{(k),\bm \theta_T}_{\bo w, T,r}\big(\Delta_T(k-1)\big).$

The following result generalises our sampling procedure to general weights.
\begin{propo}\label{propLimitQWEqualsLimitQk}
Under assumption \eqref{hyprifa}, it holds 
\begin{equation}
\lim_{T\to \infty}\Q^{(k),\bm \theta_T}_{\bo w, T,r}\Big(\Delta_T(k-1)\Big)=\lim_{T\to \infty}\Q^{(k),\bm \theta_T }_{T,r}\Big(\Delta_T(k-1)\Big),
\end{equation}
where the right-hand side is  given in Proposition \ref{lemmaLimitOfJointLaw}.
\end{propo}
\begin{proof} From identity \eqref{eqnQWIsAMixtureOfQS}, we have
\[
\begin{split}
 \lim_{T\to \infty}\Q^{(k),\bm \theta_T}_{\bo w, T,r}\Big(\Delta_T(k-1)\Big)
& =\lim_{T\to \infty}\sum_{ \bo c\in \bo [d]^k} q^{(\bo w),\bo c,\bm \theta_T}_{T,r}\Q^{(k),\bm \theta_T}_{\bo c, T,r}(\Delta_T(k-1))\\
& =\sum_{ \bo c\in \bo [d]^k} \lim_{T\to \infty}q^{(\bo w),\bo c,\bm \theta_T}_{T,r}\lim_{T\to \infty}\Q^{(k),\bm \theta_T}_{\bo c, T,r}(\Delta_T(k-1)).
 \end{split}
\]  
Let us first study the limiting behaviour of  $q^{(\bo w),\bo c,\bm \theta_T}_{T,r}$. Recall its definition  from \eqref{defqbowBoc} and observe that its limit,  as $T\to\infty$, exists. Indeed, from  Lemma \ref{lemmaLimitUnderQColorOfSingleSpineIsGiven}, we obtain
\begin{align*}
 \lim_{T\to \infty} \Q^{(k),\bm \theta }_{T,r}\left[\bo w^{\bo D_{\bm \varsigma_T}}\right]
& = \lim_{T\to \infty} \sum_{ \bo c'\in \bo [d]^k} \bo w^{\bo{D}_{\bo c'}}
\Q^{(k),\bm \theta }_{T,r}\paren{c(\bm \varsigma_T)=\bo c'}\\
& = \sum_{ \bo c'\in \bo [d]^k}  \bo{w}^{\bo{D}_{\bo c'}}\frac{\bm \eta^{\bo D_{\bo c'}}}{(\bm \eta\cdot \vec{1})^{k}}.
\end{align*}
In other words, from  \eqref{defqbowBoc} and the previous limit, we have
\[
\lim_{T\to \infty}q^{(\bo w),\bo c,\bm \theta_T}_{T,r}=\frac{ \bo w^{\bo{D}_{\bo c}}\bm \eta^{\bo D_{\bo c}}}{\sum_{ \bo c'\in \bo [d]^k} \bo w^{\bo{D}_{\bo c'}}\bm \eta^{\bo D_{\bo c'}}}.
\]
Therefore, from our discussion above and the limit in Proposition \ref{teoJointLawUnderQSSameLawAsUnderQk}, we deduce
\[
\begin{split}
 \lim_{T\to \infty}\Q^{(k),\bm \theta_T}_{\bo w, T,r}\Big(\Delta_T(k-1)\Big)
& =\lim_{T\to \infty}\Q^{( k),\bm{\theta}_T}_{T,r}(\Delta_T(k-1))\sum_{ \bo c\in \bo [d]^k} \frac{ \bo w^{\bo{D}_{\bo c}}\bm \eta^{\bo D_{\bo c}}}{\sum_{ \bo c'\in \bo [d]^k} \bo w^{\bo{D}_{\bo c'}}\bm \eta^{\bo D_{\bo c'}}}\\
& =\lim_{T\to \infty}\Q^{( k),\bm{\theta}_T}_{T,r}(\Delta_T(k-1)),
\end{split}
\]
as required.
\end{proof}

\subsection{Proof of Theorem \ref{mainresult2}}

For ease of exposition, we prove Theorem \ref{mainresult2} through a sequence of intermediate results. We begin by establishing  identity \eqref{dk2withcolors}, which corresponds to the limit 
 of   $\p^{(k)}_{unif,T,r}(\Delta_T(k-1))$.

\begin{propo}\label{teo7}	Under assumption \eqref{hyprifa}, we have 
\[
	\begin{split}
	\lim_{T\to \infty}&\p^{(k)}_{unif,T,r}\paren{\Delta_T(k-1)} =   \frac{2^{k-1}}{(k-1)!}\left(\prod_{h=1}^{k-1}\frac{\zeta_{i_h}}{\zeta}\frac{p_{i_h}(\bm \ell_{h})w(\bm \ell_{h})}{\mathbb{E}_{i_h}\left[w(\bo L)\right]}\frac{\bm \ell_{h}^{\floor{\bo g_h}}\bm \xi^{\bo g_h}}{w(\bm \ell_{h})}{\rm d}\rho_h\right)\\
	&\hspace{5.5cm}\times\int_0^\infty  \frac{y^{k-1}}{(1+y)^2} \prod_{h=1}^{k-1}\frac{1}{(1+(1-\rho_h)y)^2} {\rm d}y.\\
	\end{split}
	\]
\end{propo}
\begin{proof}
From Theorem \ref{propofsplitting1} and applying the change of variables $\phi=2\omega/(\zeta T)$, we get
\begin{equation}\label{unifsamlimitvsint}
	\begin{split}
		\mathbb{P}^{(k)}_{unif,T,r}\left( \Delta_T(k-1)\right)&  = 
\frac{1}{(k-1)!}\frac{2}{\zeta }\\
&\times\int_0^\infty\left(e^\frac{2\omega}{\zeta T}-1\right)^{k-1}\Q^{(k),\frac{2\omega}{\zeta T}\vec{1}}_{T,r}(\Delta_T(k-1))\frac{\E_{r}\left[N^{\floor{k}}_Te^{-\frac{2\omega}{\zeta T}\vec{1}\cdot \bo Z_T}\right]}{\mathbb{P}_{r}\paren{\ N_T\geq k}}\frac{{\rm d}\omega}{T}.
\end{split}
\end{equation}
Now from Proposition \ref{propSophie} and \eqref{eqnAsymptoticOfProbabilityOfNonExtinction} we observe that for $T$ large enough, we have
\begin{equation}\label{conssophie}
\mathbb{P}_{r}\paren{ N_T\geq k}=\mathbb{P}_{r}\paren{ N_T\geq k, \bo Z_T\neq \bo 0}=\mathbb{P}_{r}\paren{ N_T\geq k| \bo Z_T\neq \bo 0}\mathbb{P}_{r}\paren{ \bo Z_T\neq \bo 0}\sim \frac{2\xi_r}{\zeta T},
\end{equation}
where the conditional probability goes to 1 since
\[
\left(\frac{N_T}{T}\ \Big| \bo{Z}_T\ne \bo{0}\right) \stackrel{(d)}{\to} \frac{\zeta(\bm  \eta\cdot \vec{1})}{2}\gamma.
\]
For the moment, let us assume that the limit and the integral on the right-hand side of \eqref{unifsamlimitvsint} can be interchanged; we will justify this step below. Thus using \eqref{conssophie},   \eqref{eqnDiscountedPowerOfSizeOfGaltonWatson} (with $\rho=0$) and
\begin{equation}\label{exponentialasymp}
\lim_{T\to\infty}T^{k-1}\left(e^\frac{2\omega}{\zeta T}-1\right)^{k-1}=\left(\frac{2\omega}{\zeta}\right)^{k-1},
\end{equation}
we observe
\[
	\begin{split}
		\lim_{T\to\infty}\mathbb{P}^{(k)}_{unif,T,r}\left( \Delta_T(k-1)\right)&= \frac{2^{k-1}(\bm \eta \cdot \vec{1})^k}{(k-1)!}\prod_{h=1}^{k-1}\frac{\zeta_{i_h}}{\zeta}\frac{p_{i_h}(\bm \ell_{h})w(\bm \ell_{h})}{\mathbb{E}_{i_h}\left[w(\bo L)\right]}\frac{\bm \ell_{h}^{\floor{\bo g_h}}\bm \xi^{\bo g_h}}{w(\bm \ell_{h})}\\
&\hspace{.5cm}\times\left(\int_0^\infty\omega^{k-1}(1+\omega\bm \eta \cdot \vec{1})^{-2}\frac{1 }{(1+\omega(1-\rho_h)\bm \eta \cdot \vec{1})^{2}}{\rm d}\omega\right){\rm d}\rho_h.
	\end{split}
\]
To obtain our result, we simply perform the change of variables 
$y=x\bm \eta\cdot \vec{1}$.

To conclude the proof, we justify that the limit and the integral on the right-hand side of \eqref{unifsamlimitvsint} can be interchanged by invoking the Generalized Lebesgue Dominated Convergence Theorem. Indeed, we denote by $f(T,\omega)$ the integrand in the right-hand side of \eqref{unifsamlimitvsint} and  
observe 
\begin{equation}\label{eqnInequalityfAndgUniformSampling}
f(T,\omega)\leq 
 T^{k-1}\left(e^\frac{2\omega}{\zeta T}-1\right)^{k-1}\frac{\E_{r}\left[N^{\floor{k}}_Te^{-\frac{2\omega}{\zeta T}\vec{1}\cdot \bo Z_T}\right]}{T^k\mathbb{P}_{r}\paren{\ N_T\geq k}}=:g(T,\omega).
\end{equation}
From \eqref{unifsamlimitvsint} with $A=\Omega$, it is clear
\[
\int_0^\infty g(T,\omega){\rm d}\omega=(k-1)!\frac{\zeta}{2}\p^{(k)}_{unif,T,r}\left( \Omega\right)=(k-1)!\frac{\zeta}{2}, \qquad \textrm{for } T> 0.
\]
Next, we show that, as $T\to \infty$, the limits of $f(T,\cdot)$ and $g(T, \cdot)$ exist pointwise. By the asymptotic in \eqref{eqnLimitOfJointLaw}, it is enough to establish this convergence  for $g(T, \cdot)$. By \eqref{eqnDiscountedPowerOfSizeOfGaltonWatson} (with $\rho=0$),  \eqref{conssophie} and \eqref{exponentialasymp},
as $T$ goes to infinity, we have
\[
g(T,\omega)\to \omega^{k-1}\paren{\frac{\zeta}{2}}^{1-k}\frac{\paren{\frac{\zeta}{2}}^{k-1} \frac{k!\xi_r(\bm \eta\cdot \vec{1})^k}{\paren{1+\omega\bm \eta\cdot \vec{1}}^{k+1}}}{\frac{2\xi_{r}}{\zeta }}
=\frac{\zeta}{2} k!(\bm \eta\cdot \vec{1})^k\frac{\omega^{k-1}}{\paren{1+\omega\bm \eta\cdot \vec{1}}^{k+1}}.
\]Finally, we prove that 
\[
\lim_{T\to \infty}\int_0^\infty g(T,\omega){\rm d}\omega=\int_0^\infty \lim_{T\to \infty}g(T,\omega){\rm d}\omega,
\]which by the previous discussions, it is equivalent to prove that
\[
(\bm \eta\cdot \vec{1})^k\int_0^\infty \frac{\omega^{k-1}}{\paren{1+\omega\bm \eta\cdot \vec{1}}^{k+1}}{\rm d}\omega=\frac{1}{k}.
\]The latter follows by  performing  the following change of variables,  $\omega\bm \eta\cdot \vec{1}=t/(1-t)$, i.e.
\begin{equation}\label{usefulinte}
\begin{split}
(\bm \eta\cdot \vec{1})^k\int_0^\infty \frac{\omega^{k-1}}{\paren{1+\omega\bm \eta\cdot \vec{1}}^{k+1}}{\rm d}\omega
=\int_0^1 \frac{\paren{\frac{t}{1-t}}^{k-1}}{\paren{1-t}^{-k-1}}\frac{1}{(1-t)^2}{\rm d}t
=\int_0^1 t^{k-1}{\rm d}t=\frac{1}{k}.
\end{split}
\end{equation}
In other words, by the Generalised Lebesgue Dominated Convergence Theorem, we conclude 
\[
\lim_{T\to \infty}\int_0^\infty f(T,\omega){\rm d}\omega=\int_0^\infty \lim_{T\to \infty}f(T,\omega){\rm d}\omega,
\]
as desired. \end{proof}

We now proceed to deduce the first identity in \eqref{limitsigualdad}.
\begin{propo}\label{prop8}
Under assumption \eqref{hyprifa}, we have 
\begin{equation}\label{limitimpS}
	\lim_{T\to \infty}\p^{(k)}_{\bo c,T,r}(\Delta_T(k-1))=\lim_{T\to \infty}\p^{(k)}_{unif,T,r}\paren{\Delta_T(k-1)}. 
	\end{equation}
\end{propo}
\begin{proof} We first recall from \eqref{eqpunifcIntro}, the explicit form of $\p^{(k)}_{\bo c,T,r}(\Delta_T(k-1))$. We also recall that $\{\bo Z_{t}\geq \bo D_{\bo c}\}=\{Z^{(m)}_{t}\geq D_{\bo c,m}, \textrm{ for all } m\in [d]\}$
 and using the asymptotic in \eqref{eqnAsymptoticOfProbabilityOfNonExtinction}, we  note that, for $T$ large enough,
\begin{equation}\label{limitDc}
\mathbb{P}_{r}\paren{ \bo Z_T\geq \bo D_{\bo c}}\sim \mathbb{P}_{r}\paren{\bo Z_T\neq \bo 0}\sim \frac{2\xi_{r} }{\zeta }\frac{1}{T}.
\end{equation}
For now, we shall assume that it is permissible to interchange the limit and the integral on the right-hand side of \eqref{eqpunifcIntro}; the validity of this step will be justified later.
Without loss of generality, we also assume $S_{\bo c}=[d]$, since the general case follows a similar proof.

Putting all the pieces together in \eqref{eqpunifcIntro}  for  the set of interest (with $n=k-1$), and applying the change of variables $\bm \phi=2\bm \omega/(\zeta T)$, with Jacobian $|\textrm{det}\ J|=2^d /(\zeta T)^d$,  we obtain, 
\[
\begin{split}
&\lim_{T\to\infty}\p^{k}_{\bo c,T,r}(\Delta_T(k-1))=\lim_{T\to\infty}\frac{1}{\mathbb{P}_{r}\paren{\ \bo Z_T\geq \bo D_{\bo c}}}\left(\prod_{m=1}^d\frac{1}{(D_{\bo c,m}-1)!}\right)\\
&\hspace{4cm}\times \int_{\re^d_+} \big(e^{\bm\phi}-\vec 1\big)^{\bo D_{\bo c}-\vec{1}}\Q^{(k),\bm \phi}_{\bo c,T,r}(\Delta_T(k-1))\E_{r}\left[\bo Z^{\floor{\bo D_{\bo c}}}_Te^{-\bm \phi\cdot \bo Z_T}\right]{\rm d}\bm \phi\\
&= \prod_{m=1}^d\frac{1}{(D_{\bo c,m}-1)!}\frac{2^{d-1}\zeta^{1-d} }{\xi_{r}}\\
&\hspace{2cm}\times\int_{\re^d_+}\lim_{T\to\infty}T^{1-d} \left(e^{\frac{2\bm \omega}{\zeta T}}-\vec 1\right)^{\bo D_{\bo c}-\vec{1}}
\Q^{(k),\frac{2}{\zeta T}\bm \omega}_{\bo c, T,r}\left(\Delta_T(k-1)\right)\E_{r}\left[\bo Z^{\floor{\bo D_{\bo c}}}_Te^{-\frac{2}{\zeta T}\bm \omega\cdot \bo Z_T}\right]{\rm d}\bm \omega\\
&= \prod_{m=1}^d\frac{1}{(D_{\bo c,m}-1)!}\frac{2^{k-1}}{\xi_{r}\zeta^{k-1}}\\
&\hspace{2cm}\times \int_{\re^d_+}\bm \omega^{\bo D_{\bo c}-\vec{1}}\lim_{T\to\infty}\frac{1}{T^{k-1}}\Q^{(k),\frac{2}{\zeta T}\bm \omega}_{\bo c, T,r}\left(\Delta_T(k-1)\right)\E_{r}\left[\bo Z^{\floor{\bo D_{\bo c}}}_Te^{-\frac{2}{\zeta T}\bm \omega\cdot \bo Z_T}\right]{\rm d}\bm \omega,
\end{split}
\]
where in the last identity, we had use
\[
\lim_{T\to\infty}T^{k-d}\left(e^{\frac{2\bm \omega}{\zeta T}}-\vec 1\right)^{\bo D_{\bo c}-\vec{1}}=\prod_{m=1}^d\left(\frac{2 \omega_m}{\zeta}\right)^{D_{\bo c,m}-1}.
\]
On the other hand, using Propositions \ref{lemmaLimitOfJointLaw} and \ref{teoJointLawUnderQSSameLawAsUnderQk},  and the asymptotic in \eqref{eqnDiscountedPowerOfSizeOfGaltonWatson2}, one obtains
\[
\begin{split}
\lim_{T\to\infty}\frac{1}{T^{k-1}}&\Q^{(k),\frac{2}{\zeta T}\bm \omega}_{\bo c,T,r}\left(\Delta_T(k-1)\right)\E_{r}\left[\bo Z^{\floor{\bo D_{\bo c}}}_Te^{-\frac{2}{\zeta T}\bm \omega\cdot \bo Z_T}\right]\\
& = \xi_{r}\bm \eta^{\bo D_{\bo c}} 
\left(\prod_{h=1}^{k-1}p_{i_h}(\bm \ell_{h})\bm \ell_{h}^{\floor{\bo g_h}}\bm \xi^{\bo g_h}\eta_{i_h} \alpha_{i_{h}}\right) \frac{1}{(1+\bm \eta\cdot \bm \omega)^2}\prod_{h=1}^{k-1}\frac{1}{(1+(1-\rho_h)\bm \eta \cdot \bm \omega)^{2}} {\rm d}\rho_h.
\end{split}
\]Thus, we have
\begin{equation}\label{eqnPSUnifAsAnIntegral}
\begin{split}
\lim_{T\to\infty}&\p^{(k)}_{\bo c,T,r} (\Delta_T(k-1))= \frac{2^{k-1}}{\zeta^{k-1}}\eta^{\bo D_{\bo c}} \left(\prod_{m=1}^d\frac{1}{(D_{\bo c,m}-1)!}\right)\bm 
\prod_{h=1}^{k-1}p_{i_h}(\bm \ell_{h})\bm \ell_{h}^{\floor{\bo g_h}}\bm \xi^{\bo g_h}\eta_{i_h} \alpha_{i_{h}}\\
&\hspace{4cm}\times \int_{\re^d_+}{\rm d}\bm \omega\, \bm \omega^{\bo D_{\bo c}-\vec{1}}\frac{1}{(1+\bm \eta\cdot \bm \omega)^2}\prod_{h=1}^{k-1}\frac{1}{(1+(1-\rho_h)\bm \eta \cdot \bm \omega)^{2}} \ {\rm d}\rho_h.
\end{split}
\end{equation}
The previous identity can be simplified. Indeed, we  use Liouville's extension of Dirichlet integral  (see for instance the identities in  \cite{MR2360010}, 4.635) to rewrite the previous integral as follows
\[
\begin{split}
\int_{\re^d_+}{\rm d}\bm \omega\, \bm \omega^{\bo D_{\bo c}-\vec{1}}\frac{1}{(1+\bm \eta\cdot \bm \omega)^2}\prod_{h=1}^{k-1}\frac{1}{(1+(1-\rho_h)\bm \eta \cdot \bm \omega)^{2}}&=\frac{\left(\prod_{m=1}^{d}(D_{\bo c,m}-1)!\right)}{(k-1)!}\frac{1}{\bm \eta^{\bo D_{\bo c}}}\\
&\hspace{-2cm}\times\int_0^\infty\omega^{k-1}(1+\omega)^{-2}\prod_{h=1}^{k-1}\frac{1 }{(1+\omega(1-\rho_h))^{2}}{\rm d}\omega.
\end{split}\]
By combining the preceding identity with \eqref{eqnPSUnifAsAnIntegral} we deduce the desired identity.

In order to conclude the proof of the previous result, we require to justify the interchange of the limit with the integral on the right-hand side of \eqref{eqpunifcIntro}. We proceed similarly as in the proof of Proposition \ref{teo7}. That is to say, we invoke the Generalised Lebesgue Dominated Convergence Theorem. Let 
\[
\begin{split}
f(T,\omega)&:= T^{k-d}\left(e^{\frac{2\bm \omega}{\zeta T}}-\vec 1\right)^{\bo D_{\bo c}-\vec{1}}\Q^{(\bo c),\frac{2}{\zeta T}\bm \omega}_{T,r}(\Delta_T(k-1))\frac{\E_{r}\left[\bo Z^{\floor{\bo D_{\bo c}}}_Te^{-\frac{2}{\zeta T}\bm \omega\cdot \bo Z_T}\right]}{T^k\mathbb{P}_{r}\paren{\ \bo Z_T\geq \bo D_{\bo c}}}\\
&\le T^{k-d}\left(e^{\frac{2\bm \omega}{\zeta T}}-\vec 1\right)^{\bo D_{\bo c}-\vec{1}}\frac{\E_{r}\left[\bo Z^{\floor{\bo D_{\bo c}}}_Te^{-\frac{2}{\zeta T}\bm \omega\cdot \bo Z_T}\right]}{T^k\mathbb{P}_{r}\paren{\ \bo Z_T\geq \bo D_{\bo c}}}=:g(T,\omega).
\end{split}
\] 
From Propositions \ref{lemmaLimitOfJointLaw} and \ref{teoJointLawUnderQSSameLawAsUnderQk}, it follows that the limit of $f(T,\omega)$ exists whenever the limit of  $g(T,\omega)$ does. Thus we use the asymptotic behaviours in \eqref{eqnDiscountedPowerOfSizeOfGaltonWatson2} and \eqref{limitDc}, together with  $\sum_{m\in[d]}D_{\bo c,m}=k$, to deduce
\begin{equation}\label{glimitDc}
\lim_{T\to\infty}g(T,\bm \omega)=\lim_{T\to\infty} T^{k-d}\left(e^{\frac{2\bm \omega}{\zeta T}}-\vec 1\right)^{\bo D_{\bo c}-\vec{1}}\frac{\frac{\big(\frac{\zeta T}{2}\big)^{k-1}k!\xi_r\bm \eta^{\bo D_{\bo c}}}{(1+\bm \eta\cdot \bm \omega)^{k+1}}}{T^k\frac{2\xi_r}{\zeta T}}=k!\left(\frac{\zeta}{2}\right)^{d}\bm \eta^{\bo D_{\bo c}}\frac{\bm \omega^{\bo D_{\bo c}-\vec{1}}}{(1+\bm \eta\cdot \bm \omega)^{k+1}}.
\end{equation}
This proves that the limit exists. Recall from the proof of Theorem \ref{propofsplitting2Intro} that identity \eqref{eqpunifcIntro} holds for any $A\in \mathcal{F}^{(k)}_T$. Thus when $A=\Omega$, after applying the change of variables $\bm \phi=2\bm \omega/(\zeta T)$ we obtain
\[
1=\left(\frac{2}{\zeta}\right)^d\left(\prod_{m=1}^d\frac{1}{(D_{\bo c,m}-1)!}\right)\int_{\re^d_+}g(T,\bm \omega){\rm d}\bm \omega.
\]
On the other hand, from the limit in \eqref{glimitDc} and Liouville's extension of Dirichlet integral, we see 
\[
\int_{\re^d_+}\lim_{T\to \infty}g(T,\bm \omega){\rm d}\bm \omega=k!\left(\frac{\zeta}{2}\right)^{d}\bm \eta^{\bo D_{\bo c}}\int_{\re^d_+}\frac{\bm \omega^{\bo D_{\bo c}-\vec{1}}}{(1+\bm \eta\cdot \bm \omega)^{k+1}}{\rm d}\bm \omega=\left(\frac{\zeta}{2}\right)^d\left(\prod_{m=1}^d(D_{\bo c,m}-1)!\right).
\]
In other words, we have proved that 
\[
\lim_{T\to \infty}\int_{\re^d_+}g(T,\bm \omega){\rm d}\bm \omega=\int_{\re^d_+}\lim_{T\to \infty}g(T,\bm \omega){\rm d}\bm \omega,
\]
which from the Generalised Lebesgue Dominated Convergence Theorem implies 
\[
\lim_{T\to\infty}\int_{\re^d_+}f(T,\bm \omega){\rm d}\bm \omega=\int_{\re^d_+}\lim_{T\to\infty}f(T,\bm \omega){\rm d}\bm \omega,
\]
as required.

\end{proof}

We now turn to the proof of the remaining identity in \eqref{limitsigualdad}. To this end, we first establish some auxiliary results. The following asymptotic relation will play a key role in the argument.
\begin{lemma}\label{lemmaAsymptoticGForQW}
Assume that hypothesis \eqref{hyprifa} holds. Let  $\phi\in \mathbb{R}_+$ and $\rho\in [0,1)$. Then 
\[
\begin{split}
\lim_{T\to\infty}\frac{1}{T^{k-1}}\E_{m}&\left[e^{-\frac{2\phi}{\zeta T}\bo w\cdot \bo Z_{(1-\rho)T}}\sum_{\bo v\in \mathcal{N}^{(k)}_T}\bo w^{\bo D_{\bo v}}\right] =\paren{(1-\rho)\frac{\zeta}{2}}^{k-1}\frac{k!\xi_m\big(\bm \eta\cdot \bo w\big)^k}{\paren{1+(1-\rho)\phi\bm \eta\cdot \bo w }^{k+1}}.
\end{split}
\]
\end{lemma}
\begin{proof}
First, we decompose 
\begin{align*}
& \E_m\left[e^{-\frac{2\phi}{\zeta T}\bo w\cdot \bo Z_{(1-\rho)T}}\sum_{\bo v\in \mathcal{N}^{(k)}_{(1-\rho)T}}\bo w^{\bo D_{\bo v}}\right]= \sum_{\substack{\bo D\in \{0,1,\ldots,k\}^d:\\ \sum_{m=1}^dD^{(m)}=k}}{k \choose \bo D}\E_m\left[e^{-\frac{2\phi}{\zeta T}\bo w\cdot \bo Z_{(1-\rho)T}}\bo Z^{\floor{\bo D}}_T\bo w^{\bo D}\right].
\end{align*}
Following the proof of Lemma \ref{eqnAsymptoticZDsExp}, we can easily obtain the asymptotic order of each expectation in the display above, that is to say
\[
\begin{split}
 \E_m&\left[e^{-\frac{2\phi}{\zeta T}\bo w\cdot \bo Z_{(1-\rho)T}}\sum_{\bo v\in \mathcal{N}^{(k)}_{(1-\rho)T}}\bo w^{\bo D_{\bo v}}\right]\sim \paren{(1-\rho)T\frac{\zeta}{2}}^{k-1}\\
 &\hspace{3cm}\times\frac{k!\xi_m}{\paren{1+(1-\rho)\phi\bm \eta\cdot \bo w }^{k+1}}\sum_{\substack{\bo D\in \{0,1,\ldots,k\}^d:\\ \sum_{m=1}^dD^{(m)}=k}}{k \choose \bo D} \bm \eta^{\bo D}\bo w^{\bo D}\\
&\hspace{3cm}= \paren{(1-\rho)T\frac{\zeta}{2}}^{k-1}\frac{k!\xi_m}{\paren{1+(1-\rho)\phi\bm \eta\cdot \bo w }^{k+1}}\big(\bm \eta\cdot \bo w\big)^k,
\end{split}
\]
where the last identity follows from the multinomial theorem. The asymptotic clearly implies our result. 
\end{proof}
The following result plays also a crucial  rol in establishing the remaining identity in \eqref{limitsigualdad}. 
\begin{lemma}\label{limitunderqw}
Assume that hyothesis \eqref{hyprifa} holds. Let  $\phi\in \mathbb{R}_+$ and define $\phi_T:=\frac{2\phi}{\zeta T}$. Then,  we have
\begin{equation}\label{eqnQwFractionAndWithoutraction}
\lim_{T\to\infty}\Q^{(k),\phi_T\bo w}_{\bo w, T,r}\left[\frac{\big(\bo Z_T\cdot \bo w\big)^k}{\sum_{\bo v\in \mathcal{N}^{(k)}_T}\bo w^{\bo D_{\bo v}}}\mathbf{1}_{\Delta_T(k-1)}\right]=\lim_{T\to\infty} \Q^{(k),\phi_T \bo w}_{T,r}\Big(\Delta_T(k-1)\Big).
\end{equation}
\end{lemma}
\begin{proof}
Consider $n\geq k$ arbitrary natural numbers. 
Then
\[
\frac{n^k}{n^{\floor{k}}}=\frac{n^k}{n^k\paren{1-\frac{1}{n}}\cdots \paren{1-\frac{k-1}{n}}}=\prod_{j=1}^{k-1}\paren{1-\frac{j}{n}}^{-1}.
\]Using $(1-x)^{-1}=\sum_{\ell=0}^\infty x^\ell$ for $|x|<1$, collecting the terms with  constant coefficient and those with coefficient $n^{-1}$, we get
\begin{equation}\label{eqnAsymptoticsFornkAndnFloork}
\frac{n^k}{n^{\floor{k}}}=1+\sum_{j=1}^{k-1}\frac{j}{n}+O\paren{\sum_{j=1}^{k-1}\frac{j^2}{n^2}}=1+\frac{k(k-1)}{2}\frac{1}{n}+O\paren{\frac{1}{n^2}}.
\end{equation}Note that the error term can be bounded from above by a constant times $n^{-2}$. 
Let us define the fraction appearing on the left-hand side of \eqref{eqnQwFractionAndWithoutraction} as
\[
R_T:=\frac{\big(\bo Z_T\cdot \bo w\big)^k}{\sum_{\bo v\in \mathcal{N}^{(k)}_T}\bo w^{\bo D_{\bo v}}}=\frac{\displaystyle\sum_{|\bo D|=k}{k \choose \bo D}\bo Z^{\bo D}_T\bo w^{\bo D}}{\displaystyle\sum_{|\bo D|=k}{k \choose \bo D}\bo Z^{\floor{\bo D}}_T\bo w^{\bo D}},
\]
where the last equality follows from  \eqref{eqnMultinomialDenominatorQS0} and \eqref{eqnApproximationToMultinomialDenominatorQS}, and writing the sum over $\{\bo D\in \{0,1,\ldots,k\}^d: \sum_{m=1}^dD_{m}=k\}$ simply as $\{|\bo D|=k\}$.  Let $A\in \mathcal{F}_T^{(k)}$, thus
from the identity  
\[
R_T\mathbf{1}_{A}=(R_T-1)\mathbf{1}_{A}+\mathbf{1}_{A},
\] we obtain 
\begin{equation}\label{eqnInequalityQwAndRTMinusOne}
\begin{split}
\left|\Q^{(k),\phi_T\bo w}_{\bo w, T,r}\left[R_T\mathbf{1}_{A}\right]-\Q^{(k),\phi_T\bo w}_{\bo w, T,r}\paren{A}\right|&\leq \Q^{(k),\phi_T\bo w}_{\bo w, T,r}\left[R_T-1\right],
\end{split}
\end{equation}
since $R_T\ge 1$. 
 Using that $\bo Z^{\bo D}_t\geq \bo Z^{\floor{\bo D}}_t$ for all $\bo D\in\{0,1,\ldots,k\}^d$ and \eqref{eqnAsymptoticsFornkAndnFloork}, we deduce
\[
\begin{split}
\bo Z^{\bo D}_t&=\bo Z^{\floor{\bo D}}_t\prod_{m\in [d]}\frac{\left(Z^{(m)}_T\right)^{D_m}}{\left(Z^{(m)}_T\right)^{\floor{D_m}}}\\
&=\bo Z^{\floor{\bo D}}_t\prod_{m\in [d]}\paren{1+\frac{D_m(D_m-1)}{2}\frac{1}{Z^{(m)}_T}+O\paren{\left(Z^{(m)}_T\right)^{-2}}}\\
&=\bo Z^{\floor{\bo D}}_t\paren{1+\sum_{m\in [d]}\frac{D_m(D_m-1)}{2}\frac{1}{Z^{(m)}_T}+O\paren{\max_{m\in [d]}\left(Z^{(m)}_T\right)^{-2}}}.
\end{split}
\]This implies
\[
R_T-1=\frac{\sum_{m\in [d]}\frac{1}{2Z^{(m)}_T}\sum_{|\bo D|=k}{k \choose \bo D}\bo Z^{\floor{\bo D}}_T\bo w^{\bo D}D_m(D_m-1)}{\sum_{|\bo D|=k}{k \choose \bo D}\bo Z^{\floor{\bo D}}_T\bo w^{\bo D}}+O\paren{\max_{m\in [d]}\left(Z^{(m)}_T\right)^{-2}}.
\]
Let us denote by $\mathcal{R}_T$ the first term on the right-hand side above. 
Substituting the previous into \eqref{eqnInequalityQwAndRTMinusOne} and using the definition of $\Q^{(k),\phi_T\bo w}_{\bo w, T,r}$ in  \eqref{defMeasureQkTrGivenAllInfoS3}, we obtain
\[
\begin{split}
\Q^{(k),\phi_T\bo w}_{\bo w, T,r}\left[R_T-1\right]& =\frac{\E^{(k)}_r\left[\mathcal{R}_T e^{-\phi_T\bo w\cdot \bo Z_{T}}g_{k,T}{\bo w}^{{\bo D}_{\bm \varsigma_T}}\right]}{\E_r\left[e^{-\phi_T\bo w\cdot \bo Z_{T}}\sum_{\bo{v}\in \mathcal{N}_{T}^{(k)}}\bo w^{\bo D_{\bo v}}\right]}\\
&\hspace{1cm}+\frac{\E^{(k)}_r\left[O\paren{\max_{m\in [d]}\left(Z^{(m)}_T\right)^{-2}}e^{-\phi_T\bo w\cdot \bo Z_{T}}g_{k,T}{\bo w}^{{\bo D}_{\bm \varsigma_T}}\right]}{\E_r\left[e^{-\phi_T\bo w\cdot \bo Z_{T}}\sum_{\bo{v}\in \mathcal{N}_{T}^{(k)}}\bo w^{\bo D_{\bo v}}\right]}.
\end{split}
\]
Let us first rewrite the numerator of the first term on the right-hand side of the above identity. Specifically, we condition by $\mathcal{F}_T$ and then  use \eqref{tomate3} and \eqref{eqnMultinomialDenominatorQS0} to obtain 
\[
\begin{split}
\E^{(k)}_r&\left[\mathcal{R}_Te^{-\phi_T\bo w\cdot \bo Z_{T}}g_{k,T}{\bo w}^{{\bo D}_{\bm \varsigma_T}}\right]\\
&\hspace{3cm}=\E^{(k)}_r\left[e^{-\phi_T\bo w\cdot \bo Z_{T}}\sum_{m\in [d]}\frac{1}{2Z^{(m)}_T}\sum_{|\bo D|=k}{k \choose \bo D}\bo Z^{\floor{\bo D}}_T\bo w^{\bo D}D_m(D_m-1)\right]\\
&\hspace{3cm} = \sum_{m\in [d]}\frac{1}{2}\sum_{|\bo D|=k}{k \choose \bo D}D_m(D_m-1)\bo w^{\bo D}\E_r\left[\frac{1}{Z^{(m)}_T}\bo Z^{\floor{\bo D}}_t e^{-\phi_T\bo w\cdot \bo Z_{T}}\right].
\end{split}
\]
Proceeding  similarly as in the proof of Lemma \ref{eqnAsymptoticZDsExp} (basically applying Yaglom's limit from Proposition \ref{propSophie}), we can deduce
\[
\E_r\left[\frac{1}{Z^{(m)}_T}\bo Z^{\floor{\bo D}}_{T}e^{-\phi_T\bo w\cdot \bo Z_{T}}\right] \sim \paren{T\frac{\zeta}{2}}^{k-2} \frac{(k-1)!\xi_r\bm \eta^{\bo D}}{\eta_m\paren{1+\bm \eta\cdot \phi\bo w }^{k}}.
\]The latter, together with Lemma \ref{lemmaAsymptoticGForQW} imply
\[
\frac{\E_r\left[\mathcal{R}_Te^{-\phi_T\bo w\cdot \bo Z_{t}}g_{k,t}{\bo w}^{{\bo D}_{\bm \varsigma_t}}\right]}{\E_r\left[e^{-\phi_T\bo w\cdot \bo Z_{t}}\sum_{\bo{v}\in \mathcal{N}_{t}^{(k)}}\bo w^{\bo D_{\bo v}}\right]}=O\paren{\frac{1}{T}}.
\]
For the remainder term, we proceed in a similar way and obtain\[
\begin{split}
&\frac{\E^{(k)}_r\left[O\paren{\max_{m\in[d]}\left(Z^{(m)}_T\right)^{-2}}e^{-\phi_T\bo w\cdot \bo Z_{T}}g_{k,T}{\bo w}^{{\bo D}_{\bm \varsigma_T}}\right]}{\E_r\left[e^{-\phi_T\bo w\cdot \bo Z_{T}}\sum_{\bo{v}\in \mathcal{N}_{T}^{(k)}}\bo w^{\bo D_{\bo v}}\right]} \\
&\hspace{4cm}=\frac{\E_r\left[O\paren{\max_{m\in [d]}\left(Z^{(m)}_T\right)^{-2}}e^{-\phi_T\bo w\cdot \bo Z_{T}}\sum_{|\bo D|=k}{k \choose \bo D}\bo Z^{\floor{\bo D}}_T\bo w^{\bo D}
\right]}{\E_r\left[e^{-\phi_T\bo w\cdot \bo Z_{T}}\sum_{|\bo D|=k}{k \choose \bo D}\bo Z^{\floor{\bo D}}_T\bo w^{\bo D}
\right]}\\
&\hspace{6cm} =O\paren{T^{-2}}.
\end{split}
\]
In other words, we have proven that 
\[
\lim_{T\to\infty}\Q^{(k),\phi_T\bo w}_{\bo w, T,r}\left[\frac{\big(\bo Z_T\cdot \bo w\big)^k}{\sum_{\bo v\in \mathcal{N}^{(k)}_T}\bo w^{\bo D_{\bo v}}}\mathbf{1}_{A}\right]=\lim_{T\to\infty} \Q^{(k),\phi_T\bo w}_{\bo w, T,r}\Big(A\Big).
\]
The result now follows by taking $A=\Delta_T(k-1)$ and  Proposition \ref{propLimitQWEqualsLimitQk}.
\end{proof}
Having gathered all the required tools, we are now ready to deduce the remaining identity in \eqref{limitsigualdad}. 

\begin{propo}\label{prop9}
Under assumption \eqref{hyprifa}, we have
\[
	\lim_{T\to \infty}\p^{(k)}_{\bo w,T,r}\paren{\Delta_T(k-1)}=\lim_{T\to \infty}\p^{(k)}_{unif,T,r}\paren{\Delta_T(k-1)}
	\]
\end{propo}
\begin{proof} From \eqref{eqnfsplitting3Intro} and after performing    the change of variables $\phi=2x/(\zeta T)$, we obtain
\begin{equation}\label{eqnJointLawUnderPUnifWNewRescaled}
\begin{split}
\p^{(k)}_{\bo w,T,r}&\left( \Delta_T(k-1)\right)= \frac{1}{(k-1)!}\left(\frac{2}{\zeta }\right)^{k}\int_0^\infty x^{k-1}\Q^{(\bo w),\frac{2x}{\zeta T}\bo w}_{T,r}\left[\frac{\big(\bo Z_T\cdot \bo w\big)^k}{\sum_{\bo v\in \mathcal{N}^{(k)}_T}\bo w^{\bo D_{\bo v}}}\mathbf{1}_{\Delta_T(k-1)}\right]\\
& \hspace{6.5cm}\times\frac{\E_{r}\left[e^{-\frac{2x}{\zeta T}\bo w\cdot \bo Z_T}\sum_{\bo v\in \mathcal{N}^{(k)}_T}\bo w^{\bo D_{\bo v}}\right]}{T^k\mathbb{P}_{r}\paren{\ N_T\geq k}}{\rm d}x.\\
\end{split}
\end{equation}
For the moment, we  assume that it is permissible to interchange the limit with the integral on the right-hand side of \eqref{eqnJointLawUnderPUnifWNewRescaled};  this step will be justified later. It then follows from Lemmas \ref{lemmaAsymptoticGForQW} and \ref{limitunderqw}, together with 
the asymptotic relation in \eqref{conssophie}, that, upon interchanging the limit with the integral, we obtain
\begin{align*}
&\lim_{T\to \infty}\p^{(k)}_{\bo w,T,r}\left( \Delta_T(k-1)\right) \\
&\hspace{1cm}=  \frac{1}{(k-1)!}\left(\frac{2}{\zeta }\right)^{k}\int_0^\infty x^{k-1}\lim_{T\to\infty}\Q^{(k),\frac{2x}{\zeta T}\bo w}_{T,r}\paren{\Delta_T(k-1)}\frac{1}{T\mathbb{P}_{r}\paren{\ N_T\geq k}}\\
& \hspace{7cm}\times\paren{\frac{\zeta}{2}}^{k-1}\frac{k!\xi_{r}}{\paren{1+x\bm \eta\cdot \bo w }^{k+1}}\big(\bm \eta\cdot \bo w\big)^k{\rm d}x\\
&\hspace{1cm}=  k\big(\bm \eta\cdot \bo w\big)^k\int_0^\infty x^{k-1}\lim_{T\to\infty}\Q^{(k),\frac{2x}{\zeta T}\bo w}_{T,r}\paren{\Delta_T(k-1)}\frac{1}{\paren{1+x\bm \eta\cdot \bo w }^{k+1}}{\rm d}x.
\end{align*}Finally, from Proposition \ref{lemmaLimitOfJointLaw} and the change of variables $y=x\bm \eta\cdot \bo w$, we deduce
\begin{align*}
\lim_{T\to \infty}&\p^{(k)}_{\bo w,T,r}\left( \Delta_T(k-1)\right) \\
&=\frac{2^{k-1}}{(k-1)!}\left(\prod_{h=1}^{k-1}\frac{\zeta_{i_h}}{\zeta}\frac{p_{i_h}(\bm \ell_{h})w(\bm \ell_{h})}{\mathbb{E}_{i_h}\left[w(\bo L)\right]}\frac{\bm \ell_{h}^{\floor{\bo g_h}}\bm \xi^{\bo g_h}}{w(\bm \ell_{h})}{\rm d}\rho_h\right) 
\int_0^\infty \frac{y^{k-1}}{(1+y)^2}\prod_{h=1}^{k-1}\frac{1}{(1+(1-\rho_h)y)^{2}}{\rm d}y\\
&\hspace{1cm} = \lim_{T\to \infty}\p^{(k)}_{unif,T,r}\paren{\Delta_T(k-1)},
\end{align*}where the last equality follows from \eqref{dk2withcolors}.

To complete the proof, it remains to justify the validity of interchanging the limit and the integral.  Denote by $f(T,x)$ the integrand on the right-hand side of \eqref{eqnJointLawUnderPUnifWNewRescaled} and define
\[
g(T,x):=x^{k-1}\Q^{(k),\phi_T\bo w}_{\bo w,T,r}\left[\frac{\big(\bo Z_T\cdot \bo w\big)^k}{\sum_{\bo v\in \mathcal{N}^{(k)}_T}\bo w^{\bo D_{\bo v}}}\right]\frac{\E_{r}\left[e^{-\frac{2x}{\zeta T}\bo w\cdot \bo Z_T}\sum_{\bo v\in \mathcal{N}^{(k)}_T}\bo w^{\bo D_{\bo v}}\right]}{T^k\mathbb{P}_{r}\paren{\ N_T\geq k}}.
\] 
Next, we apply the inequality $n^k\le k^kn^{\floor{k}}$, valid for $n> k$
to deduce
\[
\bo Z^{\bo D}_T=\prod_{m=1}^d\big(Z^{(m)}_T\big)^{D_m}\leq \prod_{m=1}^dD_m^{D_m}\big(Z^{(m)}_T\big)^{\floor{D_m}}\leq k^{dk}\bo Z_T^{\floor{\bo D}}. 
\]This implies, using the identity in \eqref{eqnApproximationToMultinomialDenominatorQS}, that
\[
\big(\bo Z_T\cdot \bo w\big)^k=\sum_{\substack{\bo D\in \{0,1,\ldots,k\}^d:\\ \sum_{m=1}^dD_{m}=k}}{k \choose \bo D}\bo Z^{\bo D}_T\bo w^{\bo D}\leq k^{dk}\sum_{\substack{\bo D\in \{0,1,\ldots,k\}^d:\\ \sum_{m=1}^dD_{m}=k}}{k \choose \bo D}\bo Z^{\floor{\bo D}}_T\bo w^{\bo D}.
\]
Thus from \eqref{eqnMultinomialDenominatorQS0}, we may deduce that  $f(T,x)\leq k^{dk}g(T,x)$. 

To apply the Generalised Lebesgue Dominated Convergence Theorem, we first observe  that the  functions $f$ and $g$ differ only by the  indicator  function appearing  inside the probability measure  $\Q^{(\bo w),\frac{2x}{\zeta T}\bo w}_{T,r}$. In view of Corollary \ref{teoLimitQWEqualsLimitQk},  the asymptotic relation in  \eqref{eqnApproximationToMultinomialDenominatorQS} and Theorem \ref{lemmaLimitOfJointLaw}, it is therefore sufficient to show that the limit of $g$ exists and 
\[
\lim_{T\to \infty}\int_{\re_+}g(T,x){\rm d}x=\int_{\re_+}\lim_{T\to \infty}g(T,x){\rm d}x.
\]
By Lemmas \ref{lemmaAsymptoticGForQW} and \ref{limitunderqw}, together with the asymptotic relation in \eqref{conssophie}, we obtain
\[
\lim_{T\to \infty}g(T,x)=x^{k-1}\lim_{T\to\infty}\frac{\frac{\big(\frac{\zeta T}{2}\big)^{k-1}k!\xi_r(\bm \eta\cdot \bo w)^{k}}{(1+x\bm \eta\cdot \bo w)^{k+1}}}{T^k\frac{2\xi_r}{\zeta T}}=\Big(\frac{\zeta }{2}\Big)^{k}k!(\bm \eta\cdot \bo w)^{k}\frac{x^{k-1}}{(1+x\bm \eta\cdot \bo w)^{k+1}}.
\]
Thus using the identity in \eqref{usefulinte}, we see
\[
\int_{0}^\infty \lim_{T\to \infty}g(T,x){\rm d}x=\left(\frac{\zeta }{2}\right)^{k}k!\int_{0}^\infty (\bm\eta\cdot \bo w)^{k}\frac{x^{k-1}}{(1+x\bm \eta\cdot \bo w)^{k+1}}{\rm d}x=\left(\frac{\zeta }{2}\right)^{k}(k-1)!.
\]
On the other hand, from the proof of Theorem \ref{propofsplitting3Intro}, we know that identity \eqref{eqnfsplitting3Intro}  holds for any $A\in \mathcal{F}_T^{(k)}$. Thus taking  $A=\Omega$,  we have 
\[
1=\p^{(k)}_{\bo w,T,r}\left( \Omega\right)=\frac{1}{(k-1)!}\paren{\frac{2}{\zeta}}^k\int_0^\infty g(T,\omega){\rm d}\omega,
\]which  allow us to conclude the proof. 
\end{proof}
\begin{proof}[Proof of Theorem \ref{mainresult2}] The fact that only binary splittings occur in the limit (see \eqref{onlybin}) follows directly from Theorem\ref{teoJointLawSplittingTimeChildrenPartitionScaledBeforeTheLimit}. Alternatively, this can also be deduced from \eqref{dk2withcolors} by marginalising over types, splitting times, and offspring.

The limit in \eqref{dk2withcolors} is precisely Proposition \eqref{teo7}. The identities in \eqref{limitsigualdad} follow from Propositions \ref{prop8} and \ref{prop9}.
\end{proof}

\subsection{Proof of Theorem \ref{propoColorOfTheSpineIsMCPUnifLimit}}
The proof of Theorem \ref{propoColorOfTheSpineIsMCPUnifLimit} follows directly from the next two Lemmas. The first   establishes the  limiting joint distribution of the colours between consecutive  spine  splitting events under $\lim_{T\to \infty}\mathbb{P}^{(k)}_{unif,T,r}$ which corresponds to \eqref{inhoMarkovsplit}. 
To this end,  we combine Lemma \ref{lemmaColorOfTheSpineIsMCQLimit} with the asymptotic identity given in   equation \eqref{unifsamlimitvsint}. 

\begin{lemma}\label{lemmaColorOfTheSpineIsMCPUnifLimit}Fix $n\in \na$, $0<\rho_1<\cdots <\rho_n<1$,$ r\in [d]$ and $(i_h)_{h\in [n]}\in [d]^{n}$. For any $m\in [d]$, we let  $D_m:=\#\{h\in [n]:i_h=m \}$  and define $\bo D=(D_1,\ldots, D_d)$. 
Then, we have
\begin{align*}
&\lim_{T\to \infty} \mathbb{P}^{(k)}_{unif,T,r}\left(c(\varsigma_{\rho_1T}^{(1)})=i_1,\ldots, c(\varsigma_{\rho_nT}^{(1)})=i_n,\tau_1>\rho_nT\right)=\bm \eta^{\bo D}\bm \xi^{\bo D}\mathbb{E}\left[\left(\frac{1-\rho_n}{1-\rho_n W }\right)^{k-1}\right],
\end{align*}
where $W$ is a Beta random variable with parameters $(k,1)$.  
\end{lemma}
\begin{proof}
Let $\Gamma_T:=\{c(\varsigma_{\rho_1T}^{(1)})=i_1,\ldots, c(\varsigma_{\rho_nT}^{(1)})=i_n,\tau_1>\rho_nT \}$ denotes the event of interest. This event, combined with arguments similar to those employed in the proof of Proposition \eqref{teo7}, implies
\[
	\begin{split}
		\lim_{T\to \infty}\mathbb{P}^{(k)}_{unif,T,r}&\left(\Gamma_T\right)  = \frac{1}{(k-1)!}\frac{2}{\zeta }\\&\times\lim_{T\to\infty}\int_0^\infty\left(e^\frac{2\omega}{\zeta T}-1\right)^{k-1}\Q^{(k),\frac{2\omega}{\zeta T}\vec{1}}_{T,r}(\Gamma_T)\frac{\E_{r}\left[N^{\floor{k}}_Te^{-\frac{2\omega}{\zeta T}\vec{1}\cdot \bo Z_T}\right]}{\mathbb{P}_{r}\paren{\ N_T\geq k}}\frac{{\rm d}\omega}{T}\\
&= k\bm \eta^{\bo D}\bm \xi^{\bo D}(\bm \eta\cdot \vec{1})^{k}\int_{0}^{\infty}\omega^{k-1}\paren{1-\frac{\rho_n}{1+\omega(1-\rho_n)\bm \eta \cdot \vec{1} }}^{k-1}\frac{1}{(1+\omega\bm \eta \cdot \vec{1})^{k+1}}{\rm d}\omega\\
& =k\bm \eta^{\bo D}\bm \xi^{\bo D}(1-\rho_n)^{k-1}\int_{0}^{\infty}\frac{y^{k-1}}{(1+y)^2(1+(1-\rho_n)y)^{k-1}}{\rm d}y,
	\end{split}
\]
where in the second identity we have used  Lemma \ref{lemmaColorOfTheSpineIsMCQLimit} and the last identity follows from the change of variables $y=\omega\bm \eta \cdot \vec{1}$. Finally, we perform the change of variables $y=z/(1-z)$, obtaining 
\[
\begin{split}
\lim_{T\to \infty}\mathbb{P}^{(k)}_{unif,T,r}\left(\Gamma_T\right) &=\bm \eta^{\bo D}\bm \xi^{\bo D}\int_{0}^{1}\paren{\frac{1-\rho_n}{1-\rho_nz }}^{k-1}kz^{k-1}{\rm d}z =\bm \eta^{\bo D}\bm \xi^{\bo D}\mathbb{E}\left[\left(\frac{1-\rho_n}{1-\rho_n W }\right)^{k-1}\right],
	\end{split}
\]as claimed.
\end{proof}

In  \cite{AHP-p1}, Lemma 4 determines the distribution of the first time at which the spines split. Since this time corresponds to the most recent common ancestor of the sample, we now analyse its limiting behaviour, which is closely related to the previous result. The following result shows \eqref{firstsplitinglimit1}.
\begin{lemma}
For any $\rho\in (0,1)$ we have
\[
\begin{split}
\lim_{T\to \infty}\mathbb{P}^{(k)}_{unif,T,r}\left(\tau_1\geq \rho T\right) & =\mathbb{E}\left[\left(\frac{1-\rho}{1-\rho W }\right)^{k-1}\right].
\end{split}
\] 
\end{lemma}
\begin{proof}
To establish this result, one may sum over all $(i_1,\ldots, i_n)\in [d]^{n}$ in Lemma \ref{lemmaColorOfTheSpineIsMCPUnifLimit}, and use the identity $\bm{\eta}\cdot\bm{\xi}=1$. 
Alternatively, one can follow the same steps as in the proof of Proposition \ref{teo7}, applied to the event $\{c(\varsigma_T^{(1)})=i,\tau_1\geq \rho T\}$ for any $i\in [d]$, and use Corollary  \ref{corolarioantesconst} to compute the probability of this event.  
Summing over all $i$ gives the desired result. 
\end{proof}
To conclude the proof of Theorem \ref{propoColorOfTheSpineIsMCPUnifLimit}, we observe that the third identity in the statement follows directly from \eqref{inhoMarkovsplit} and \eqref{firstsplitinglimit1}. The last identity follows directly from Theorem \ref{mainresult2}.

\begin{funding}
 The first author was partially supported by the Deutsche Forschungsgemeinschaft (through grant DFG-SPP-2265). 
The second author acknowledges the support of the New Zealand Aotearoa Royal Society Te Ap\={a}rangi
Marsden Fund (22-UOA-052).
The third author was supported by the grant CF-2023-I-2566 from SECIHTI, Mexico.
\end{funding}

\end{document}